\documentclass[11pt,reqno]{amsart}
\usepackage[a4paper,margin=30mm]{geometry}
\usepackage[T1]{fontenc}
\usepackage{lmodern}
\usepackage{amsmath,amssymb,amsthm,mathtools}
\usepackage{microtype}
\usepackage{enumitem}
\usepackage[colorlinks=true]{hyperref}
\usepackage[nameinlink,capitalize]{cleveref}

\numberwithin{equation}{section}

\newtheorem{theorem}{Theorem}[section]
\newtheorem{proposition}[theorem]{Proposition}
\newtheorem{lemma}[theorem]{Lemma}
\newtheorem{corollary}[theorem]{Corollary}
\newtheorem{hypothesis}[theorem]{Hypothesis}
\crefname{hypothesis}{Hypothesis}{Hypotheses}
\Crefname{hypothesis}{Hypothesis}{Hypotheses}
\crefalias{hypothesis}{hypothesis}
\theoremstyle{definition}
\newtheorem{definition}[theorem]{Definition}
\theoremstyle{remark}
\newtheorem{remark}[theorem]{Remark}

\newcommand{\Q}{\mathbb Q}
\newcommand{\Z}{\mathbb Z}
\newcommand{\C}{\mathbb C}
\newcommand{\F}{\mathbb F}
\newcommand{\A}{\mathbb A}

\newcommand{\GQ}{G_{\Q}}
\newcommand{\Gal}{\operatorname{Gal}}
\newcommand{\Frob}{\operatorname{Frob}}
\newcommand{\Ind}{\operatorname{Ind}}
\newcommand{\tr}{\operatorname{tr}}
\newcommand{\Sym}{\operatorname{Sym}}
\newcommand{\im}{\operatorname{im}}

\newcommand{\ST}{\mathrm{ST}}
\newcommand{\CM}{\mathrm{CM}}
\newcommand{\nCM}{\mathrm{nCM}}
\newcommand{\1}{\mathbf 1}
\newcommand{\dd}{\,\mathrm d}
\newcommand{\abs}[1]{\lvert #1\rvert}

\newcommand{\GL}{\mathrm{GL}}
\newcommand{\SU}{\mathrm{SU}}

\title[Upper bounds over polynomial values]{Upper bounds for arithmetic functions over polynomial values via Chebotarev--Sato--Tate distributions}
\author{Jiong Yang}
\address{School of Mathematics and Statistics, Qingdao University, Qingdao, Shandong 266000, People's Republic of China}
\email{yangjiong@qdu.edu.cn}
\date{September 2026}
\subjclass[2020]{11F30, 11N36, 11N37, 11R42}
\keywords{polynomial values, Nair--Tenenbaum theorem, Chebotarev--Sato--Tate, symmetric powers, Rankin--Selberg coefficients, Dedekind zeta-functions, CM forms, congruences of modular forms}

\begin{document}

\begin{abstract}
We establish logarithmic upper bounds for nonnegative multiplicative functions evaluated at values of multivariable polynomials. The polynomial contribution is encoded by the permutation character on the geometric irreducible components of the corresponding hypersurface, while the arithmetic contribution is described by a class function on a joint Chebotarev--Sato--Tate group. This yields a unified framework for symmetric-power coefficients of CM and non-CM modular forms, Dedekind zeta-function coefficients, and mixed automorphic--Galois weights. As a further application, we obtain quantitative divisibility results for Fourier coefficients along polynomial values, including explicit formulas in the full residual-image and Eisenstein congruence cases.
\end{abstract}

\maketitle

\section{Introduction}

The study of arithmetic functions evaluated at polynomial values is a classical theme in analytic number theory.
Throughout this paper, let
\[
Q\in \Z[x_1,\ldots,x_m]
\]
be a primitive square-free polynomial and let $h$ be a nonnegative multiplicative function.
Let
\[
\mathcal B=\prod_{j=1}^m(X_j-Y_j,X_j]\cap\Z^m,
\qquad
V=\prod_{j=1}^mY_j,
\qquad
X^*=\max_jX_j,
\]
where $1\le Y_j\le X_j$. We assume throughout that
\begin{equation*}\tag{B}
\max_j\log X_j\le A\min_j\log Y_j
\end{equation*}
for a fixed $A\ge1$.
We are interested in upper bounds for
\[
\sum_{\substack{\mathbf n\in \mathcal B\\ Q(\mathbf n)\ne 0}} h(\abs{Q(\mathbf n)}).
\]
Such sums arise naturally in the study of divisor functions, representation functions, ideal-counting functions, and Fourier coefficients of automorphic forms.

The subject goes back at least to Erd\H{o}s \cite{Erdos1952}, who studied the divisor function along polynomial values. General multiplicative functions were studied by Shiu \cite{Shiu1980}. Nair \cite{Nair1992} established a general upper-bound theorem for multiplicative functions evaluated at polynomial values in short intervals, and Nair and Tenenbaum \cite{NairTenenbaum1998} substantially extended this theory to more general arithmetic functions and several polynomial arguments.

Analogous questions for binary forms were investigated by de la Bretèche and Browning \cite{BDBrowning2006} and, in greater generality, by de la Bretèche and Tenenbaum \cite{deLaBretecheTenenbaum2012}. More recently, general upper-bound principles for multiplicative functions along suitably equidistributed sequences have been developed by Chan, Koymans, Pagano and Sofos in \cite{CKPSAverages2025}, while de Bretèche and Tenenbaum \cite{BT2026} obtained a flexible multivariable polynomial-value estimate.

Our aim is not to establish another general Nair--Tenenbaum type estimate. Instead, we investigate the arithmetic information contained in the prime-level factor that occurs in such estimates. We show that, for a broad class of arithmetic functions arising from Galois representations and automorphic forms, this factor admits a natural representation-theoretic interpretation in terms of a joint Chebotarev--Sato--Tate distribution. The novelty lies in combining the top-weight permutation representation governing the local densities of $Q$ with the arithmetic prime weight on a single joint group.




Let
\[
X_Q=V(Q)\subset \A^m_{\Q},
\]
and let $M_Q/\Q$ be the minimal finite Galois extension over which all geometric irreducible components of $X_Q$ are defined. Let $G_Q=\Gal(M_Q/\Q)$ be the Galois group acting faithfully on these components. Let $\tau_Q$ denote the corresponding permutation character. If $r(Q)$ denotes the number of irreducible factors of $Q$ over $\Q$, then
\[
\langle \tau_Q,1\rangle_{G_Q}=r(Q).
\]
Moreover, writing $N_Q(p)=|V(Q)(\F_p)|$, we have, at every sufficiently good prime,
\begin{equation}\label{eq:intro-localdensity}
\frac{N_Q(p)}{p^{m-1}}
=\tau_Q(\Frob_p)+O_Q(p^{-1/2}).
\end{equation}
Thus the familiar local density of the polynomial is governed, to first order, by the top-dimensional permutation representation of the hypersurface $V(Q)$.

Now let $M/\Q$ be a finite Galois extension containing $M_Q$, put $G=\Gal(M/\Q)$, and let $S$ be the Sato--Tate group attached to the automorphic or motivic datum under consideration; $S$ is allowed to be trivial. The relevant prime data are carried by a compact Chebotarev--Sato--Tate group
$$
G_S\subseteq G\times S,
$$
with Sato--Tate measure $\dd\mu_{G_S}(g,s)$.
At a good prime $p$, write
$$
j_p=(\Frob_p,s_p)\in G_S.
$$
Suppose that the prime values of $h$ are given by a bounded nonnegative class function $\Phi$:
$$
h(p)=\Phi(j_p).
$$
The fundamental constant occurring in our estimates is
\begin{equation}\label{eq:intro-kappa}
\kappa_Q(\Phi)=
\int_{G_S}\tau_Q(g)\Phi(g,s)
\,\dd\mu_{G_S}(g,s).
\end{equation}
It measures the correlation between the geometric component representation attached to $Q$ and the arithmetic function defining $h$.

Our first result is the following theorem.
\begin{theorem}\label{thm:main1}
Let $Q$ be primitive and square-free, let $h$ be admissible in the sense of \cref{sec:sieve}, and assume that the box $\mathcal B$ satisfies condition {\rm(B)}. Suppose that
$$
h(p)=\Phi(j_p)
$$
at every good prime. If the function
$$
F_Q(g,s)=\tau_Q(g)\Phi(g,s)
$$
satisfies the Mertens-effective Chebotarev--Sato--Tate hypothesis of
\cref{hyp:effectiveCST}, then
\begin{equation}\label{eq:intro-general-bound}
\sum_{\substack{\mathbf n\in\mathcal B\\Q(\mathbf n)\ne0}}
h\bigl(\abs{Q(\mathbf n)}\bigr)
\ll_{Q,h,A}
V(\log X^*)^{\kappa_Q(\Phi)-r(Q)}.
\end{equation}
If only the qualitative Chebotarev--Sato--Tate hypothesis is known, then the exponent on the right-hand side may be replaced by
$\kappa_Q(\Phi)-r(Q)+\varepsilon$
for every $\varepsilon>0$.
\end{theorem}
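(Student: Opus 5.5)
The plan is to combine a general Nair--Tenenbaum type estimate for multivariable polynomial values, of the kind recalled in \cref{sec:sieve} (in the spirit of \cite{NairTenenbaum1998,BT2026}), with the local density formula \eqref{eq:intro-localdensity} and the effective Chebotarev--Sato--Tate hypothesis. The general estimate applies to an admissible $h$ and a box $\mathcal B$ satisfying (B). It bounds the sum by
\[
V\prod_{p\le X^*}\Bigl(1-\frac{\rho_Q(p)}{p}\Bigr)\sum_{\substack{n\le X^*\\ p\mid n\Rightarrow p\nmid \Delta}}\frac{h(n)\rho_Q(n)}{n},
\qquad \rho_Q(n)=\frac{\#\{\mathbf a\bmod n: Q(\mathbf a)\equiv0\}}{n^{m-1}}.
\]
Since $h$ is admissible and the local densities are multiplicative, the inner sum is majorised by the Euler product $\prod_{p\le X^*}(1+h(p)\rho_Q(p)/p+\cdots)$. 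The higher prime-power terms contribute a convergent product, by admissibility and Hensel/Lang--Weil bounds on $\rho_Q(p^k)$. The whole bound therefore reduces to
\[
V\exp\Bigl(\sum_{p\le X^*}\frac{(h(p)-1)\rho_Q(p)}{p}+O(1)\Bigr).
\]

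Next I insert $\rho_Q(p)=N_Q(p)/p^{m-1}=\tau_Q(\Frob_p)+O(p^{-1/2})$ from \eqref{eq:intro-localdensity}. Because $h(p)=\Phi(j_p)$ is bounded, the error terms sum to $O(1)$. The finitely many bad primes also contribute $O(1)$, using the trivial bound $\rho_Q(p)\ll 1$ for primitive $Q$. This leaves two prime sums.

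\textbf{The $\tau_Q$ sum.} The sum $\sum_{p\le X^*}\tau_Q(\Frob_p)/p$ equals $r(Q)\log\log X^*+O(1)$. This follows from effective Chebotarev in Mertens form for $M_Q/\Q$ (unconditional for a fixed field, via the prime ideal theorem in $M_Q$), together with $\langle\tau_Q,1\rangle=r(Q)$.

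\textbf{The $\Phi$ sum.} The sum $\sum_{p\le X^*}\tau_Q(\Frob_p)\Phi(j_p)/p$ is exactly $\sum_p F_Q(j_p)/p$. \cref{hyp:effectiveCST} is designed to give, in its Mertens-effective form, $\kappa_Q(\Phi)\log\log X^*+O(1)$, with $\kappa_Q(\Phi)$ as in \eqref{eq:intro-kappa}. The two sums together yield the exponent $\kappa_Q(\Phi)-r(Q)$ on $\log X^*$.

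\textbf{Qualitative hypothesis.} If only qualitative equidistribution of $j_p$ is available, the Mertens asymptotic is replaced by $\sum_{p\le x}F_Q(j_p)/p\le(\kappa_Q(\Phi)+\varepsilon)\log\log x+O_\varepsilon(1)$. To get this, I would apply partial summation to $\frac1{\pi(x)}\sum_{p\le x}F_Q(j_p)\to\kappa_Q(\Phi)$, which holds since $F_Q$ is bounded and (after approximating by continuous functions, if $\Phi$ is merely Riemann-integrable) equidistribution applies. This gives the $\varepsilon$-loss.

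The main obstacle is the first step: making sure the general polynomial-value estimate applies in the required uniformity. This means the box shape controlled by (B), the sieve level $X^{*\delta}$, and a replacement of the sum over $n\le X^*$ by primes up to a power of $X^*$. That power changes $\log\log$ only by $O_A(1)$. I would take the statement in \cref{sec:sieve} as the black box. The only genuinely new input to check carefully is the uniform convergence of the prime-power part of the Euler product, using $\rho_Q(p^k)\ll_Q p^{-\lceil k/2\rceil\cdot 0}$-type bounds for square-free $Q$ together with the growth condition on $h(p^k)$ from admissibility.
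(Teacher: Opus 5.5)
Your route is essentially the paper's. You take a de la Bret\`eche--Tenenbaum sieve bound and reduce it to $V\exp\{\sum_{p\le X^*}\omega_Q(p)(h(p)-1)/p\}$ (\cref{thm:unified}). You then insert $\omega_Q(p)=\tau_Q(\Frob_p)+O(p^{-1/2})$, evaluate $\sum_p\tau_Q(\Frob_p)/p=r(Q)\log\log x+O(1)$ by unconditional Chebotarev, and handle $\sum_pF_Q(j_p)/p$ by \cref{hyp:effectiveCST}, or by \cref{hyp:CST} with an $o(\log\log x)$ loss.

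Your black box $\sum_n h(n)\rho_Q(n)/n$ is not literally the shape of \cite[Theorem 3.1]{BT2026}. That theorem is stated with exact-valuation counts $\rho_Q^\#(\mathbf s)$ over the irreducible factors and moduli $K(\mathbf s)$. The paper shows that at each good $p$ these assemble into $L_p=1+\sum_{\nu\ge1}h(p^\nu)\bigl(\delta_Q(p^\nu)-\delta_Q(p^{\nu+1})\bigr)$. This is dominated by your Euler factor, so your simplification is harmless.

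\textbf{Gap: the prime-power convergence.} The step that fails as written is your control of the prime-power part of the Euler product.
\begin{itemize}
\item The bound you propose is $\rho_Q(p^k)\ll_Q p^{-\lceil k/2\rceil\cdot 0}=1$, equivalently $\delta_Q(p^k)\ll_Q p^{-k}$. It holds for $m=1$ but is false for $m\ge2$. For $Q=x_1^2-x_2^3$, every pair with $p^{\lceil k/2\rceil}\mid x_1$ and $p^{\lceil k/3\rceil}\mid x_2$ is a zero mod $p^k$, so $\rho_Q(p^{6j})\ge p^{j}$.
\item The tools you cite (Hensel lifting from nonsingular points plus Lang--Weil on the singular locus) only handle $k=2$. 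They give $\delta_Q(p^2)\ll_Q p^{-2}$, which is where square-freeness enters.
\item For deep prime powers the singular locus dominates, and you need a separate input such as $\delta_Q(p^\nu)\ll_Q(\nu+1)^{m-1}p^{-\nu/D}$ with $D=\deg Q$ (\cite[Lemma 2.8]{CKPS2025}, \cite[Lemma 4.2]{BT2026}). This bound alone is too weak at $\nu=2$ when $D\ge2$, so you must combine the two.
\item Concretely, fix $\nu_0>3D$. For $2\le\nu\le\nu_0$ use monotonicity $\delta_Q(p^\nu)\le\delta_Q(p^2)\ll p^{-2}$. For $\nu>\nu_0$ use the $p^{-\nu/D}$ decay. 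Together with admissibility this gives $\sum_{\nu\ge2}h(p^\nu)\delta_Q(p^\nu)\ll_{Q,h}p^{-2}$, exactly as in \cref{lem:primepowers}.
\end{itemize}
With that lemma in place, your argument goes through.
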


\subsection{Non-CM modular forms and symmetric powers}
Our first family of applications concerns Fourier coefficients of symmetric-power lifts. Let $\pi$ be the cuspidal automorphic representation of $\GL_2(\A_\Q)$ associated with a primitive non-CM holomorphic newform; for simplicity we assume that its central character is trivial. At every unramified prime write
$$
\lambda_\pi(p)=2\cos\theta_p,
\qquad 0\le\theta_p\le\pi.
$$
The Sato--Tate group is $\SU(2)$ and
$$
\dd\mu_{\ST}(\theta)=\frac2\pi\sin^2\theta\,\dd\theta.
$$
For $d\ge1$,
$$
\lambda_{\Sym^d\pi}(p)=U_d(\cos\theta_p)=\frac{\sin((d+1)\theta_p)}{\sin\theta_p}.
$$

Define
\begin{equation}\label{eq:intro-kappad}
\kappa_d^{\nCM}=\frac2\pi\int_0^\pi\abs{U_d(\cos\theta)}\sin^2\theta\,\dd\theta.
\end{equation}
We have
$$
\kappa_Q(H_d)=r(Q)\kappa_d^{\nCM},
\qquad
H_d(\theta)=\abs{U_d(\cos\theta)}.
$$

Applying \cref{thm:main1}, we obtain the following result.

\begin{theorem}\label{thm:main2}
    Assume that the effective Chebotarev--Sato--Tate hypothesis holds for $(\pi,Q)$. Then
\begin{equation}
\sum_{\substack{\mathbf n\in\mathcal B\\Q(\mathbf n)\ne0}}
\abs{\lambda_{\Sym^d\pi}(\abs{Q(\mathbf n)})}
\ll V(\log X^*)^{-r(Q)(1-\kappa_d^{\nCM})}.
\end{equation}
If only the qualitative version is known, the exponent on the right-hand side may be increased by an arbitrary $\varepsilon>0$.
\end{theorem}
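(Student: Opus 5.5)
We deduce the statement from \cref{thm:main1} by taking $h(n)=\abs{\lambda_{\Sym^d\pi}(n)}$, $S=\SU(2)$, and the joint group $G_S=G\times\SU(2)$, where $G=\Gal(M/\Q)$ with $M=M_Q$. The prime weight is $\Phi(g,s)=H_d(\theta(s))=\abs{U_d(\cos\theta)}$, where $\theta(s)$ is the conjugacy-class angle of $s$. Three things must be checked: that $h$ is admissible in the sense of \cref{sec:sieve}, that $h(p)=\Phi(j_p)$ at good primes, and that $\kappa_Q(H_d)=r(Q)\kappa_d^{\nCM}$. Once these hold, \eqref{eq:intro-general-bound} gives the exponent $\kappa_Q(\Phi)-r(Q)=-r(Q)(1-\kappa_d^{\nCM})$, which is the claimed bound. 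The qualitative variant follows in the same way from the $\varepsilon$-version of \cref{thm:main1}.

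\textbf{Admissibility.} The function $\lambda_{\Sym^d\pi}$ is multiplicative, so its absolute value is a nonnegative multiplicative function. The Ramanujan bound for holomorphic forms (Deligne) gives
\[
\abs{\lambda_{\Sym^d\pi}(p^k)}\le\binom{k+d}{d}
\]
at unramified $p$, since $\lambda_{\Sym^d\pi}(p^k)$ is the trace of $\Sym^k\Sym^d$ on a unitary Satake parameter. At the finitely many ramified primes there is an analogous bound depending on $\pi$ and $d$. Together these give $h(p^k)\ll_{d,\pi} A_1^k$ and $h(n)\ll_\varepsilon n^\varepsilon$, which is the Shiu/Nair--Tenenbaum type admissibility condition. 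At unramified primes we have $h(p)=\abs{U_d(\cos\theta_p)}=\Phi(j_p)$ by definition.

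\textbf{Computing $\kappa$.} The main point is that the joint measure factors: the Sato--Tate component is independent of the Frobenius in $G$. I would argue this via the hypothesis for $(\pi,Q)$, which is formulated on $G_S$. The group $\SU(2)$ is connected, and a non-CM $\pi$ has no nontrivial twist by a finite-order character that equals $\pi$. Consequently the joint image of $(\Frob_p,s_p)$ is the full product $G\times\SU(2)$, with Haar measure equal to the product of uniform measure on $G$ and $\mu_{\ST}$. Equivalently, for every irreducible $\rho$ of $G$ the Rankin--Selberg type $L(s,\rho\otimes\Sym^k\pi)$ has no pole when $(\rho,k)\neq(1,0)$. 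This independence is the step I expect to need the most care. If the hypothesis as stated in \cref{hyp:effectiveCST} already specifies $G_S$, the step is just an identification; otherwise I would justify it by Goursat's lemma, because $\SU(2)$ has no nontrivial finite quotients. Fubini then gives
\[
\kappa_Q(H_d)=\Bigl(\frac{1}{|G|}\sum_{g\in G}\tau_Q(g)\Bigr)\int H_d\,\dd\mu_{\ST}=\langle\tau_Q,1\rangle_{G}\,\kappa_d^{\nCM}=r(Q)\kappa_d^{\nCM}.
\]

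\textbf{Conclusion.} The function $F_Q=\tau_Q\Phi$ satisfies the Mertens-effective hypothesis by assumption. Inserting the value of $\kappa_Q(H_d)$ into \cref{thm:main1} finishes the proof. The implied constant depends on $Q$, $\pi$, $d$ and $A$.
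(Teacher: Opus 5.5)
Your proposal is correct and follows essentially the paper's route. The paper shows admissibility from the degree-$(d+1)$ Euler factor with unitary Satake parameters; your bound $\binom{k+d}{d}\le(k+1)^d$ gives exactly its condition $h(p^\nu)\ll(\nu+1)^{B_0}$. It then applies \cref{cor:CSTsieve} with $\kappa_Q(H_d)=r(Q)\kappa_d^{\nCM}$, coming from the product structure $G_S=G\times\SU(2)$; your Goursat argument (a closed subgroup with surjective projections onto a finite group and onto the connected group $\SU(2)$ must be the full product) simply makes explicit what the paper asserts from the connectedness of $\SU(2)$.
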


The Chebotarev--Sato--Tate phenomenon was studied in \cite{MurtyMurty2009,Wong2019}. Their results imply that, for non-CM holomorphic cusp forms, the Chebotarev--Sato--Tate theorem holds in the following cases.
\begin{itemize}
    \item $M_Q$ is solvable over $\Q$.
    \item $G_Q=\Gal(M_Q/\Q)\simeq G_1\times G_2$ with $M_Q^{G_1}$ totally real and $G_1$ abelian.
    \item The strong Artin conjecture holds for $M_Q$, for example for extensions of degree at most $4$ and in many degree-$5$ cases.
\end{itemize}
In particular, if $Q$ is geometrically irreducible, then $M_Q=\Q$, and \cref{thm:main2} follows unconditionally.

For the effective version, Thorner's work in \cite{Thorner2021,Thorner2025} implies that the required effective Chebotarev--Sato--Tate estimates hold when $M/\Q$ is abelian or totally real.

When $m=1$ and $Q$ is irreducible, Chiriac and Yang \cite{ChiriacYang2022} obtained an upper bound under the strong Artin conjecture for $M_Q$. Their result was later refined by Luo and Lao \cite{LuoLao2024} using the Sato--Tate theorem; in the present notation, their bound is
\[
\sum_{X-Y<n\le X}
\abs{a_{\Sym^d\pi}(\abs{Q(n)})}
\ll Y(\log X)^{\kappa_d^{\nCM}-1}.
\]
The logarithmic exponent is the same as ours, but we only require the Chebotarev--Sato--Tate conjecture rather than the strong Artin conjecture.

Woo \cite{Woo2026} also studied the corresponding absolute-value sum for a one-variable polynomial $Q$. Assuming that the base change of $\pi$ to $M_Q$ is cuspidal, she proved a bound of the form
\[
\sum_{n\le X}\abs{a_{\pi}(\abs{Q(n)})}
\ll X(\log X)^{-0.066}.
\]
Her assumption is slightly weaker than ours, since the Chebotarev--Sato--Tate conjecture follows if the cuspidal base change exists for all $\Sym^d\pi$, whereas our resulting estimate is stronger.

When $m>1$, Chiriac and Yang \cite{ChiriacYang2022} also treated cubic polynomials in two variables defining elliptic curves. Our theorem extends this result to arbitrary primitive square-free polynomials.

We also consider the Rankin--Selberg case. For any $a,b\geq1$, define
\[\kappa_{a,b}^{\Delta}
=\frac2\pi\int_0^\pi
\abs{\sin((a+1)\theta)\sin((b+1)\theta)}\,\dd\theta.\]

These constants make the logarithmic saving explicit. Since $U_d$ is the character of a nontrivial irreducible representation of $\mathrm{SU}(2)$, character orthogonality and the strict Cauchy--Schwarz inequality give
\[
0<\kappa_d^{\nCM}<1 \qquad (d\ge1).
\]
Similarly,
\[
\kappa_{a,b}^{\Delta}\le1,
\]
with equality if and only if $a=b$. Thus the symmetric-power estimate above gives a genuine logarithmic saving for every $d\ge1$, while the Rankin--Selberg-type estimate below gives a genuine logarithmic saving when $a\ne b$.

\begin{theorem}
  Assume that the effective Chebotarev--Sato--Tate hypothesis holds for $(\pi,Q)$. Then
  \[\sum_{\substack{\mathbf n\in\mathcal B\\Q(\mathbf n)\ne0}}
\abs{\lambda_{\Sym^a\pi}(\abs{Q(\mathbf n)})
\lambda_{\Sym^b\pi}(\abs{Q(\mathbf n)})}
\ll V(\log X^*)^{-r(Q)(1-\kappa_{a,b}^{\Delta})}.\]
\end{theorem}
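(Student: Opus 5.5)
The plan is to deduce the statement from \cref{thm:main1}, taking $S=\SU(2)$ and the joint group $G_S=G\times\SU(2)$ with $G=\Gal(M_Q/\Q)$. The multiplicative function is $h(n)=\abs{\lambda_{\Sym^a\pi}(n)\lambda_{\Sym^b\pi}(n)}$, and the class function is
\[
\Phi(g,\theta)=\abs{U_a(\cos\theta)U_b(\cos\theta)},
\]
which does not depend on $g$. The argument has three steps: check admissibility of $h$, compute $\kappa_Q(\Phi)$, and check the hypothesis on $F_Q=\tau_Q\Phi$.

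\emph{Admissibility.} Since $\pi$ is non-CM and holomorphic, Deligne's bound gives $\abs{\lambda_{\Sym^d\pi}(p^k)}\le \binom{k+d}{d}$, and the same holds at ramified primes up to constants. Hence $h(p^k)\le C^{k}$ for a constant $C=C(a,b)$. This supplies the growth conditions required of admissible functions in \cref{sec:sieve}.

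\emph{Computing $\kappa_Q(\Phi)$.} The key input is the independence between the Galois part and the Sato--Tate part. I take the Chebotarev--Sato--Tate hypothesis for $(\pi,Q)$ to mean that the joint measure is the product of normalized counting measure on $G$ and the Sato--Tate measure. This is the content of the hypothesis for non-CM forms, since $\SU(2)$ has no nontrivial finite quotients. The integral then factors:
\[
\kappa_Q(\Phi)=\Bigl(\frac1{|G|}\sum_{g}\tau_Q(g)\Bigr)\cdot\frac2\pi\int_0^\pi\abs{U_a(\cos\theta)U_b(\cos\theta)}\sin^2\theta\,\dd\theta .
\]
The first factor is $\langle\tau_Q,1\rangle=r(Q)$. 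In the second factor, the identity $U_d(\cos\theta)\sin\theta=\sin((d+1)\theta)$ turns the integrand into $\abs{\sin((a+1)\theta)\sin((b+1)\theta)}$. So the second factor equals $\kappa^\Delta_{a,b}$, and therefore $\kappa_Q(\Phi)=r(Q)\kappa^\Delta_{a,b}$.

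\emph{Conclusion.} Inserting this value into \eqref{eq:intro-general-bound} gives the exponent $r(Q)\kappa^\Delta_{a,b}-r(Q)=-r(Q)(1-\kappa^\Delta_{a,b})$, which is the claimed bound. If only the qualitative hypothesis is available, the same computation yields the exponent plus $\varepsilon$.

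The main obstacle is that $F_Q=\tau_Q\abs{U_aU_b}$ is only continuous, not a finite combination of irreducible characters, while the effective hypothesis is naturally stated for characters. I would approximate $\abs{U_aU_b}$ from above in sup-norm by a trigonometric polynomial in $\theta$, i.e.\ a finite combination of $U_k(\cos\theta)$, with error $\delta$. Since $h\le$ this majorant, the effective hypothesis applied to the finitely many characters $\tau_Q\otimes U_k$ gives a Mertens estimate with constant $\kappa_Q(\Phi)+O(\delta)$.

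Fixing $\delta$ costs only an $\varepsilon$ in the exponent. Removing that $\varepsilon$ requires \cref{hyp:effectiveCST} to hold directly for the continuous function $F_Q$, which is how the hypothesis is framed there. So I would check that \cref{hyp:effectiveCST} is formulated for bounded Riemann-integrable class functions, or that it follows from character estimates with polynomial dependence on the weight $k$ plus a Beurling--Selberg majorant/minorant argument that shrinks $\delta$ with $\log\log x$.
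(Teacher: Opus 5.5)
This is correct and is the paper's argument: admissibility of the product, the factorization $\kappa_Q(H_{a,b})=r(Q)\kappa_{a,b}^{\Delta}$ on the direct product $G\times\SU(2)$, and a direct application of the general theorem, with Hypothesis~\ref{hyp:effectiveCST} assumed for $F_Q$ itself, so your majorant detour is not needed. One small correction: state admissibility through the polynomial bound $h(p^k)\le (k+1)^{a+b}$ that your binomial estimate already gives. Mere exponential growth $C^k$ would not meet the paper's condition $h(p^\nu)\ll(\nu+1)^{B_0}$, nor the $a^\varepsilon$ requirement of the de la Bret\`eche--Tenenbaum class.
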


Let $\pi_1$ and $\pi_2$ be two cuspidal automorphic representations of $\mathrm{GL}_2(\A_\Q)$ corresponding to non-CM holomorphic modular forms. The relevant Sato--Tate group is $S=\mathrm{SU}(2)\times\mathrm{SU}(2)$, and Hypothesis~\ref{hyp:effectiveCST} becomes the effective joint Chebotarev--Sato--Tate hypothesis. We have the following theorem.

\begin{theorem}
Assume that $\pi_1$ and $\pi_2$ are not twist-equivalent and  $(Q,\pi_1,\pi_2)$ satisfies Hypothesis~\ref{hyp:effectiveCST}. Then
\begin{equation}\label{eq:twoformsmain}
\sum_{\substack{\mathbf n\in\mathcal B\\Q(\mathbf n)\ne0}}
\abs{\lambda_{\Sym^a\pi_1}(\abs{Q(\mathbf n)})
\lambda_{\Sym^b\pi_2}(\abs{Q(\mathbf n)})}
\ll V(\log X^*)^{-r(Q)(1-\kappa_a^{\nCM}\kappa_b^{\nCM})}.
\end{equation}
\end{theorem}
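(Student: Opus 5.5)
The plan is to deduce the statement from \cref{thm:main1}, applied to the joint group attached to $(Q,\pi_1,\pi_2)$, with prime weight
\[
\Phi(g,\theta_1,\theta_2)=\abs{U_a(\cos\theta_1)}\,\abs{U_b(\cos\theta_2)},\qquad (g,\theta_1,\theta_2)\in G_S\subseteq G\times\SU(2)\times\SU(2).
\]

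First I would check admissibility in the sense of \cref{sec:sieve} for
\[
h(n)=\abs{\lambda_{\Sym^a\pi_1}(n)\lambda_{\Sym^b\pi_2}(n)}.
\]
This function is multiplicative and nonnegative. The Deligne bound gives $h(p^k)\le (k+1)^2$ up to a constant depending on $a,b$, and $h(p)\le(a+1)(b+1)$. These are the Shiu/Nair--Tenenbaum type growth conditions. At good primes we have $h(p)=\Phi(j_p)$. The finitely many bad primes, and the primes dividing the levels, only affect the implied constant.

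The main step is to identify the joint group and compute $\kappa_Q(\Phi)$. Since $\pi_1$ and $\pi_2$ are non-CM and not twist-equivalent, the Sato--Tate group of the pair is the full $\SU(2)\times\SU(2)$, with Haar measure $\mu_{\ST}\otimes\mu_{\ST}$. This is the standard Goursat-type argument: a proper closed subgroup surjecting onto both factors would force $\pi_2\simeq\pi_1\otimes\chi$.

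I would then argue that the joint Chebotarev--Sato--Tate group is $G_S=G\times\SU(2)\times\SU(2)$, so that $\mu_{G_S}$ is the product of uniform measure on $G$ with the two Sato--Tate measures. This independence holds because $\SU(2)\times\SU(2)$ is connected and has no nontrivial finite abelian quotients. Hence no nontrivial character of a finite quotient of the automorphic side can match Frobenius in $G$. This is exactly the mechanism making the hypothesis ``joint'' in Hypothesis~\ref{hyp:effectiveCST}, and I expect it is implicit in how that hypothesis is formulated for $(Q,\pi_1,\pi_2)$.

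With the product structure, Fubini gives
\[
\kappa_Q(\Phi)=\Bigl(\frac1{|G|}\sum_{g\in G}\tau_Q(g)\Bigr)\kappa_a^{\nCM}\kappa_b^{\nCM}=r(Q)\,\kappa_a^{\nCM}\kappa_b^{\nCM},
\]
using $\langle\tau_Q,1\rangle=r(Q)$, with $\tau_Q$ inflated from $G_Q$ to $G$.

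Finally I would apply \cref{thm:main1}. Its hypothesis is satisfied because Hypothesis~\ref{hyp:effectiveCST} is assumed for $F_Q=\tau_Q\Phi$. This yields the exponent
\[
\kappa_Q(\Phi)-r(Q)=-r(Q)\bigl(1-\kappa_a^{\nCM}\kappa_b^{\nCM}\bigr),
\]
which is \eqref{eq:twoformsmain}.

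The main obstacle is justifying that the joint measure genuinely factors as a product, i.e.\ that the non-twist-equivalence assumption forces independence both between the two $\SU(2)$ factors and between them and $G$. If Hypothesis~\ref{hyp:effectiveCST} is stated with an unspecified $G_S$, I would first try to prove $G_S=G\times\SU(2)^2$ via Goursat's lemma. This uses that $\SU(2)^2$ has no proper closed normal subgroup of finite index, and that the only closed subgroups of $\SU(2)^2$ projecting onto both factors are graphs of automorphisms. Those graphs correspond to twist equivalence, which is excluded by assumption.
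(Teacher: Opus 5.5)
Your proposal is correct and follows the same route as the paper. The paper likewise takes the joint group to be $G\times\SU(2)\times\SU(2)$ with product Haar measure, which is forced by surjectivity of the projections and connectedness of $\SU(2)^2$. It then factors $\kappa_Q(\Phi)=r(Q)\kappa_a^{\nCM}\kappa_b^{\nCM}$ and applies the general sieve theorem.

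One small correction: the admissibility bound should read $h(p^k)\ll_{a,b}(k+1)^{a+b}$, not $(k+1)^2$. The local factors of $\Sym^a\pi_1$ and $\Sym^b\pi_2$ have degrees $a+1$ and $b+1$ with unitary parameters. Any polynomial bound in $k$ suffices for admissibility, so the argument is unaffected.
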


By \cite{Thorner2021,Thorner2025}, the effective joint Chebotarev--Sato--Tate estimates required here are unconditional when $M/\Q$ is abelian or totally real.

\subsection{CM forms and arithmetic entanglement}

The CM case reveals a phenomenon that is invisible in the non-CM setting. Let $\pi$ be a primitive CM newform with CM by an imaginary quadratic field $E$. Its Sato--Tate group is the disconnected normalizer
$$
S_{\CM}=N_{\SU(2)}(T)=T\sqcup wT,
$$
where $T\simeq\mathrm U(1)$. If
$$
\chi_E:G_\Q\longrightarrow\{\pm1\}
$$
is the quadratic character of $E/\Q$ and
$$
c:S_{\CM}\longrightarrow\{\pm1\}
$$
is the component character, then the Frobenius components satisfy
$$
c(s_p)=\chi_E(p).
$$
Thus the joint group is not a direct product but the fiber product
$$
G_S^{\CM}=
\{(\sigma,s)\in G\times S_{\CM}:
\chi_E(\sigma)=c(s)\}.
$$

This interaction produces an additional correlation term. Put
$$
\eta_Q(E)=\langle\tau_Q,\chi_E\rangle_G.
$$
For
$$
A_d=\frac1\pi\int_0^\pi
\abs{U_d(\cos\theta)}\,\dd\theta,
\qquad
B_d=\abs{U_d(0)},
$$
we obtain
\begin{equation}\label{eq:intro-CMconstant}
\kappa_Q(H_d)=\frac{r(Q)+\eta_Q(E)}2A_d+\frac{r(Q)-\eta_Q(E)}2B_d.
\end{equation}
The required reciprocal-prime estimate is unconditional in the CM case and follows from classical Hecke $L$-function theory. Thus we obtain the following theorem.
\begin{theorem}\label{thm:main4}
    Let $\pi$ be as above. Then
\begin{equation}
\sum_{\substack{\mathbf n\in\mathcal B\\Q(\mathbf n)\ne0}}
\abs{\lambda_{\Sym^d\pi}(\abs{Q(\mathbf n)})}
\ll V(\log X^*)^{\kappa_Q(H_d)-r(Q)},
\end{equation}
where $\kappa_Q(H_d)$ is given by \eqref{eq:CMHdconstant}.
\end{theorem}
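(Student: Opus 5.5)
The proof applies \cref{thm:main1} with $S=S_{\CM}$, the joint group $G_S^{\CM}$, and $\Phi=H_d(s)=\abs{\tr\Sym^d(s)}$. Three things must be checked: that $h(n)=\abs{\lambda_{\Sym^d\pi}(n)}$ is admissible, that the Mertens-effective Chebotarev--Sato--Tate hypothesis (\cref{hyp:effectiveCST}) holds unconditionally for $F_Q=\tau_Q\Phi$ on $G_S^{\CM}$, and that $\kappa_Q(H_d)$ equals \eqref{eq:intro-CMconstant}.

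\textbf{Admissibility.} Multiplicativity of $\abs{\lambda_{\Sym^d\pi}}$ is standard. The Ramanujan bound for CM forms (Deligne, or directly from the Hecke character) gives $\abs{\lambda_{\Sym^d\pi}(p^k)}\le\binom{k+d}{d}$. This supplies the growth conditions on prime powers required in \cref{sec:sieve}.

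\textbf{Computing the constant.} The Haar measure on $G_S^{\CM}$ is pushed forward from $G\times S_{\CM}$ with the fiber constraint. On the fiber $\chi_E(\sigma)=1$, the $S$-component is Haar on $T$. There $U_d(\cos\theta)$ has average absolute value $A_d$, since Haar measure on $T$ is $\frac1\pi\dd\theta$ on $[0,\pi]$ after identifying $e^{\pm i\theta}$. On $wT$ every element has eigenvalues $\pm i$, so $\abs{\tr\Sym^d}=\abs{U_d(0)}=B_d$ is constant. Hence
\[
\kappa_Q(H_d)=A_d\,\frac1{|G|}\sum_{\chi_E(\sigma)=1}\tau_Q(\sigma)+B_d\,\frac1{|G|}\sum_{\chi_E(\sigma)=-1}\tau_Q(\sigma).
\]
The two partial averages are $\frac12(\langle\tau_Q,1\rangle\pm\langle\tau_Q,\chi_E\rangle)=\frac{r(Q)\pm\eta_Q(E)}2$, which gives \eqref{eq:intro-CMconstant}. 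If $E\subseteq M$ fails, we enlarge $M$ to $MM_Q E$; this is harmless because $\tau_Q$ factors through $G_Q$.

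\textbf{Effective equidistribution (the main step).} It suffices to prove
\[
\sum_{p\le x}\frac{F_Q(j_p)}{p}=\kappa_Q(H_d)\log\log x+C+o(1),
\]
or even with $O(1)$ error. First split $p$ according to $\chi_E(p)$.

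For inert $p$ we have $H_d(s_p)=B_d$, so the sum reduces to $B_d\sum_{p\ \mathrm{inert}}\tau_Q(\Frob_p)/p$. This is a Chebotarev sum in $ME/\Q$, and Mertens-type asymptotics with $O(1)$ error follow from the nonvanishing and holomorphy of Artin $L$-functions at $s=1$ (Brauer induction suffices for this).

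For split $p=\mathfrak p\bar{\mathfrak p}$, we have $s_p=\mathrm{diag}(\psi(\mathfrak p)/\abs{\cdot},\overline{\cdot})$, where $\psi$ is the Hecke Grössencharacter of $E$. Expand $\abs{U_d(\cos\theta)}$ in a Fourier series $\sum_k c_k\cos(2k\theta)$. Then:
\begin{itemize}
\item $\abs{U_d}$ is Lipschitz, so truncating at $K$ costs $O(K^{-1})$ uniformly.
\item Each term with $k\ne0$ becomes a sum over degree-one primes $\mathfrak p$ of $E$ of $\rho(\Frob_{\mathfrak p})\psi^{2k}(\mathfrak p)/N\mathfrak p$, where $\rho$ is an irreducible constituent of $\tau_Q|_{\Gal(ME/E)}$.
\item By Brauer induction, $\rho$ is a virtual sum of induced Hecke characters. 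The resulting $L$-functions are Hecke $L$-functions of nontrivial Grössencharacters (infinity type $2k\ne0$), which are entire and nonvanishing on $\Re s=1$. Each such term therefore contributes $O_k(1)$.
\end{itemize}

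The obstacle is uniformity in $k$. For a log-power bound we need the total error to be $O(1)$, not merely $o(\log\log x)$. I would first try absolute convergence: $\abs{c_k}\ll k^{-2}$ away from the finitely many kinks, combined with a polynomial-in-$k$ bound for the zero-free region of $L(s,\psi^{2k}\otimes\cdot)$, cutting the sum at $K=(\log x)^{\delta}$.

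If this fails, I would use the fallback in \cref{thm:main1}. Letting $K\to\infty$ slowly gives $o(\log\log x)$, i.e. exponent $+\varepsilon$. To remove the $\varepsilon$, I would sandwich $\abs{U_d}$ between trigonometric polynomials whose $L^1$ error is $\ll\varepsilon$ and note that only an upper bound is required. A majorant $\Phi^+\ge H_d$ of fixed degree $K$ gives exponent $\kappa_Q(\Phi^+)-r(Q)$. Exactness then requires the Mertens error to be $O(1)$, which the finite-degree argument above does provide for fixed $K$.
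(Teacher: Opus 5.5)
Your main line is the paper's own argument. The constant is computed on the fiber product exactly as in \cref{prop:CMconstant}. The reciprocal-prime estimate is \cref{lem:CMMertens}: inert primes reduce to Chebotarev because $H_d\equiv B_d$ on $wT$, while on $T$ one Fourier-expands, turns each nonzero mode into Hecke $L$-functions of infinite-order characters via Brauer induction, and sums the modes absolutely. The uniformity in $k$ that you leave open is settled exactly as the paper settles it, so you should commit to this route rather than hedge. For $k\neq0$ the characters $\phi\cdot(\psi^{2k}\circ N)$ produced by Brauer induction (with $\psi$ unitarily normalized) are non-real, and even their squares are non-principal, so there is no exceptional zero. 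Their analytic conductors grow polynomially in $\abs{k}$. Use the trivial bound for $x\le\exp(C\log^2(\abs{k}+3))$, and beyond that the prime ideal theorem for these characters with error $x\exp(-c\sqrt{\log x})$. This gives a per-mode contribution $O(\log(2+\abs{k}))$ uniformly in $x$. Next, $c_k\ll k^{-2}$ holds globally, not just away from kinks: after two integrations by parts, the corners of $\abs{U_d(\cos\theta)}$ at $\theta=j\pi/(d+1)$ only produce boundary terms of size $O(k^{-2})$. Hence $\sum_k\abs{c_k}\log(2+\abs{k})<\infty$, all nonzero modes together contribute $O(1)$, and no truncation at $K=(\log x)^\delta$ is needed.

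Your fallback, on the other hand, does not do what you claim and should be dropped. A fixed-degree trigonometric polynomial $\Phi^+\ge H_d$ cannot equal $\abs{U_d(\cos\theta)}$, which is not smooth. So $\Phi^+-H_d$ is continuous, nonnegative and not identically zero on $T$. The $T$-part enters $\kappa_Q$ with weight $\abs{G}^{-1}\sum_{\chi_E(\sigma)=1}\tau_Q(\sigma)=(r(Q)+\eta_Q(E))/2>0$, so $\kappa_Q(\Phi^+)>\kappa_Q(H_d)$ strictly. An $O(1)$ Mertens error for $\Phi^+$ therefore only yields the exponent $\kappa_Q(\Phi^+)-r(Q)=\kappa_Q(H_d)-r(Q)+O(\varepsilon)$, with an implied constant depending on $\varepsilon$ through the degree. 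That is precisely the $\varepsilon$-loss you meant to remove. The exact exponent in the theorem thus rests entirely on the uniform per-mode estimate above. With that step written out, your proof is complete and coincides with the paper's.
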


When $Q$ is irreducible, $\eta_Q(E)\in\{0,1\}$. It equals $1$ precisely when the CM character becomes trivial on the stabilizer of a geometric component, equivalently when the CM field occurs in the corresponding component field. Thus \eqref{eq:intro-CMconstant} gives a particularly transparent example of the arithmetic entanglement detected by our formalism.

Let $\pi$ be an automorphic representation of $\mathrm{GL}_2(\A_\Q)$ attached to a holomorphic modular form, and let $q(x) \in \Z[x]$ be a monic quadratic polynomial. In \cite{blomersums2008}, Blomer studied $\sum_{n\leq x}a_\pi(q(n))$ and proved that
\[
\sum_{n \le x} a_\pi(q(n)) = c x + O\left(x^{6/7+\varepsilon}\right).
\]
He gave an explicit expression for $c$ via trace formulas of Hecke operators. He showed that $c = 0$ if the weight is even or the discriminant of $q(x)$ is positive, but a simple vanishing criterion was missing. The vanishing of $c$ was discussed in \cite{kuan2023sums}, where it was conjectured that $c = 0$ unless $\pi$ is dihedral. Combining our \cref{thm:main2} and \cref{thm:main4}, we confirm this conjecture as the following corollary.
\begin{corollary}
    Let $\pi$ be an automorphic representation of $\mathrm{GL}_2(\A_\Q)$ attached to a holomorphic modular form, and let $q(x) \in \Z[x]$ be a monic quadratic polynomial. Then
\[
\sum_{n \le x} a_\pi(q(n)) = c x + O\left(x^{6/7+\varepsilon}\right).
\]
The constant $c$ vanishes if either of the following holds:
\begin{itemize}
    \item $\pi$ is not a CM modular form, or
    \item $\pi$ is a CM modular form whose associated field $K_\pi$ is distinct from the quadratic field defined by $q$.
\end{itemize}

\end{corollary}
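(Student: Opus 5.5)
The asymptotic itself is Blomer's theorem \cite{blomersums2008}, so nothing needs proving there. The new content is the vanishing of $c$, and the plan is to force it from the upper bounds. Since $\abs{a_\pi(q(n))}\le\abs{\lambda_\pi(\abs{q(n)})}$ after the usual normalization (the weight factor $n^{(k-1)/2}$ is harmless; we assume Blomer's sum is normalized), we get
\[
\Bigl|\sum_{n\le x}a_\pi(q(n))\Bigr|\le\sum_{\substack{n\le x\\ q(n)\ne0}}\abs{\lambda_\pi(\abs{q(n)})}+O(1).
\]
It therefore suffices to show that the right-hand side is $o(x)$. Then $cx+O(x^{6/7+\varepsilon})=o(x)$ forces $c=0$. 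We take $m=1$, $d=1$, $\mathcal B=(0,x]$, $V=x$, so condition (B) holds trivially. If $q$ has a repeated root, $q=(x+a)^2$ is not square-free, and this case would be handled separately via $\lambda_\pi(n^2)$ and the bound for $\Sym^2$. Otherwise $q$ is square-free and primitive, being monic.

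\emph{Non-CM case.} If $q$ is irreducible over $\Q$, then $M_q$ is its quadratic splitting field. If $q$ splits over $\Q$, then $M_q=\Q$. In both cases $M_q$ is abelian, so by the results of Thorner \cite{Thorner2021,Thorner2025} quoted after \cref{thm:main2} the effective Chebotarev--Sato--Tate hypothesis holds for $(\pi,q)$. \Cref{thm:main2} with $d=1$ then gives the bound $x(\log x)^{-r(q)(1-\kappa_1^{\nCM})}$. Since $r(q)\ge1$ and $\kappa_1^{\nCM}<1$, this is $o(x)$.

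\emph{CM case.} Here we apply \cref{thm:main4}, which is unconditional, with $d=1$. We have $U_1(\cos\theta)=2\cos\theta$, so $B_1=0$ and $A_1=\frac1\pi\int_0^\pi 2\abs{\cos\theta}\dd\theta=4/\pi$. Hence
\[
\kappa_q(H_1)=\frac{r(q)+\eta_q(E)}{2}\cdot\frac4\pi .
\]
The exponent $\kappa_q(H_1)-r(q)$ must now be shown negative.

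\begin{itemize}
\item If $q$ is irreducible with splitting field $K\ne E$, then $G_q=\Gal(K/\Q)$ permutes the two roots, so $\tau_q=1+\chi_K$. Since $\chi_E\ne 1,\chi_K$ on $G=\Gal(KE/\Q)$, we get $\eta_q(E)=0$. The exponent is $2/\pi-1<0$.
\item If $q$ splits over $\Q$, then $r=2$ and $\tau_q=2$, so $\eta_q(E)=0$. The exponent is $4/\pi-2<0$.
\end{itemize}

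In both cases the sum is $o(x)$.

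The only delicate point is the identification $\eta_q(E)=\langle1+\chi_K,\chi_E\rangle=\delta_{K=E}$. This is exactly why the exceptional case $K_\pi=E=K$ remains, where the exponent becomes $4/\pi-1>0$. A secondary obstacle is matching Blomer's normalization and the possibly nontrivial nebentypus with the hypotheses of \cref{thm:main2,thm:main4}. There I would absorb the central character into $\Phi$, since $\abs{\lambda}$ is unchanged, and note that the Sato--Tate measure is unaffected.
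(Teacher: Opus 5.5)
Your proposal is correct and follows the route the paper indicates. Blomer's asymptotic together with the $o(x)$ bounds from \cref{thm:main2} and \cref{thm:main4} at $d=1$, with your correctly computed exponents $-r(q)(1-8/(3\pi))$, $2/\pi-1$ and $4/\pi-2$, forces $c=0$. Your nebentypus remark is actually where the new content lies, not a secondary point: those theorems assume trivial central character, which forces even weight, and there Blomer already has $c=0$.

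One caution concerns your aside on $q=(x+a)^2$, which works only for non-CM $\pi$. For a CM form such as $\eta(4z)^6$ (CM by $\Q(i)$, nebentypus $\chi_{-4}$), $L(s,\Sym^2\pi)$ has $\zeta(s)$ as a factor, so $\sum_{n\le x}\lambda_\pi(n^2)\sim cx$ with $c\ne0$. The CM bullet must therefore be read for $q$ not a perfect square.
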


\subsection{Dedekind zeta-functions and mixed weights}

Another natural example is the ideal-counting function. Let $K/\Q$ be a number field and write
$$
\zeta_K(s)=\sum_{n\ge1}\frac{a_K(n)}{n^s}.
$$
At every unramified prime, $a_K(p)$ is itself a permutation character. More precisely, after enlarging $M$ to contain the Galois closure of $K$, let
$$
H_K=\Gal(M/K),
\qquad
\tau_K=\Ind_{H_K}^G1.
$$
Then
$$
a_K(p)=\tau_K(\Frob_p).$$
The relevant logarithmic constant therefore has the purely representation-theoretic form
\begin{equation}\label{eq:intro-kappaK}
\kappa_Q(K)=\langle\tau_Q,\tau_K\rangle_G.
\end{equation}

Our theorem gives the following unconditional estimate.
\begin{theorem}
We have
\begin{equation}\label{eq:intro-Dedekind}
\sum_{\substack{\mathbf n\in\mathcal B\\Q(\mathbf n)\ne0}}
a_K(\abs{Q(\mathbf n)})
\ll_{Q,K,A}V(\log X^*)^{\kappa_Q(K)-r(Q)}.
\end{equation}
\end{theorem}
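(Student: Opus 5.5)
We deduce the theorem from \cref{thm:main1} with trivial Sato--Tate group $S$, so that $G_S=G$ and $\mu_{G_S}$ is normalized Haar (counting) measure on the finite group $G=\Gal(M/\Q)$. Here $M$ is the compositum of $M_Q$ and the Galois closure of $K$. The steps are the following.

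\textbf{Admissibility.} First we check that $h=a_K$ is admissible in the sense of \cref{sec:sieve}. The function $a_K$ is nonnegative and multiplicative. It also satisfies $a_K(p^\nu)\le\binom{\nu+n-1}{n-1}\le(\nu+1)^{n-1}$ with $n=[K:\Q]$, since $a_K(p^\nu)$ counts ideals of norm $p^\nu$ and there are at most $n$ primes above $p$. This gives the required bound $h(p^\nu)\le C^\nu$. It also gives the Shiu-type divisor bound $a_K(m)\le \tau_n(m)\ll_\varepsilon m^\varepsilon$.

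\textbf{Prime values.} At every prime $p$ unramified in $M$ we have $a_K(p)=\#\{\mathfrak p\mid p: f(\mathfrak p|p)=1\}$. This equals the number of fixed points of $\Frob_p$ acting on $G/H_K$, so $a_K(p)=\tau_K(\Frob_p)$. Thus $\Phi=\tau_K$ is a bounded nonnegative class function on $G$. The finitely many ramified primes are excluded from the good primes, and we absorb them into the implied constant as the general theorem allows.

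\textbf{The hypothesis.} We must verify \cref{hyp:effectiveCST} for $F_Q=\tau_Q\tau_K$. With $S$ trivial this reduces to an effective Chebotarev statement for the class function $\tau_Q\tau_K$ on $\Gal(M/\Q)$. The needed form is the Mertens-type estimate
\[
\sum_{p\le x}\frac{F_Q(\Frob_p)}{p}=\Bigl(\frac1{|G|}\sum_{g\in G}F_Q(g)\Bigr)\log\log x+C+O(1/\log x).
\]
We would prove this by decomposing $F_Q$ into irreducible characters $\chi$ of $G$. For each $\chi$ we use $\sum_{p\le x}\chi(\Frob_p)/p=\delta_{\chi=1}\log\log x+c_\chi+O(1/\log x)$. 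An unconditional way to obtain this avoids Artin's conjecture. By Brauer induction, or more simply by writing $\tau_Q\tau_K$ as a combination of indicator functions of conjugacy classes, it suffices to use the Chebotarev density theorem with error term for the fixed extension $M/\Q$. That theorem is unconditional: the prime ideal theorem for $M$ with de la Vallée Poussin error, applied to cyclic subextensions via Hecke $L$-functions, gives $\pi_C(x)=\frac{|C|}{|G|}\mathrm{Li}(x)+O(x\exp(-c\sqrt{\log x}))$ with constants depending on $M$. Partial summation then yields the Mertens form with error $O(1/\log x)$. This step is the main obstacle, and the only one needing care. The point is that $M$ is fixed, so all ineffective constants are allowed to depend on $Q$ and $K$.

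\textbf{The exponent.} Applying \cref{thm:main1} gives the exponent $\kappa_Q(\tau_K)-r(Q)$, where
\[
\kappa_Q(\tau_K)=\frac1{|G|}\sum_g\tau_Q(g)\tau_K(g).
\]
Both characters are permutation characters and hence real-valued, so this equals $\langle\tau_Q,\tau_K\rangle_G=\kappa_Q(K)$. This is \eqref{eq:intro-kappaK}. By Mackey, or Burnside, it counts the $G$-orbits on the product of the component set and $G/H_K$, which is a nonnegative integer. The claimed bound \eqref{eq:intro-Dedekind} follows, with implied constant depending on $Q$, $K$ and $A$.
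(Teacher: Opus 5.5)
Your proposal is correct and is essentially the paper's proof. It follows the same steps: admissibility of $a_K$ from $a_K(p^\nu)\le\binom{\nu+[K:\Q]-1}{[K:\Q]-1}$, then the identity $a_K(p)=\tau_K(\Frob_p)$ together with the effective Chebotarev theorem for the fixed extension $M/\Q$ to supply the prime input for $\tau_Q\tau_K$, and finally the general Chebotarev--Sato--Tate sieve theorem with trivial $S$, which gives the exponent $\langle\tau_Q,\tau_K\rangle_G-r(Q)$. (One small slip: admissibility in the paper requires $h(p^\nu)\ll(\nu+1)^{B_0}$ rather than $C^\nu$, but your bound $(\nu+1)^{[K:\Q]-1}$ gives exactly that.)
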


A classical special case is the representation function $r_2(n)$. Indeed,
$$
\sum_{n\ge1}\frac{r_2(n)}{n^s}=4\zeta_{\Q(i)}(s),
$$
so that
$$
r_2(n)=4a_{\Q(i)}(n).
$$

Related lattice-point problems involving quadratic forms were studied by Fomenko \cite{Fomenko1997}, while Kim \cite{Kim2007} treated more general polynomial congruence problems under strong Artin-type hypotheses. Formula \eqref{eq:intro-Dedekind} applies to the ideal-counting function of an arbitrary number field and requires no Artin conjecture. If $Q$ is irreducible and $M_Q$ is linearly disjoint from the Galois closure of $K$, then
$$
\kappa_Q(K)=1.
$$

The class-function formulation also allows finite Galois and automorphic weights to be multiplied. For a non-CM newform $\pi$, we obtain, under the corresponding effective hybrid distribution,
$$
\sum_{\substack{\mathbf n\in\mathcal B\\Q(\mathbf n)\ne0}}
a_K(\abs{Q(\mathbf n)})
\abs{\lambda_{\Sym^d\pi}(\abs{Q(\mathbf n)})}
\ll
V(\log X^*)^{\kappa_Q(K)\kappa_d^{\nCM}-r(Q)}.
$$
This mixed example illustrates the advantage of formulating the prime input on a single joint group rather than treating Galois and automorphic weights separately.

\subsection{Congruences of modular forms}\label{subsec:intro-congruences}

Our final application concerns divisibility of Fourier coefficients along polynomial values. Classical work of Serre \cite{Serre1976} shows that divisibility phenomena for coefficients of modular forms are extremely abundant; for a fixed modulus, one often obtains divisibility for almost all Fourier coefficients. We ask for a polynomial-value analogue and, more importantly, for a quantitative description of the exceptional set in terms of the interaction between the polynomial and the residual Galois representation.

Let
$$
f(z)=\sum_{n\ge1}a_f(n)q^n
$$
be a normalized cuspidal Hecke eigenform of weight $k\ge2$, level $N$, and nebentypus $\chi$. Let $\lambda$ be a prime of its coefficient field above the rational prime $\ell$, and let
$$
\bar\rho_{f,\lambda}:
G_\Q\longrightarrow\GL_2(k_\lambda)
$$
be the associated residual representation. At every prime $p\nmid N\ell$,
$$
\tr\bar\rho_{f,\lambda}(\Frob_p)
\equiv a_f(p)\pmod\lambda.
$$
Let $L_{f,\lambda}$ be the field cut out by $\bar\rho_{f,\lambda}$, put
$$
G=\Gal(M_QL_{f,\lambda}/\Q),
$$
and define
$$
z_{f,\lambda}(\sigma)=\mathbf 1_{\{\tr\bar\rho_{f,\lambda}(\sigma)=0\}}.
$$
The constant controlling divisibility along $Q$ is
\begin{equation}\label{eq:intro-delta}
\delta_{Q,f,\lambda}=
\langle\tau_Q,z_{f,\lambda}\rangle_G.
\end{equation}
We prove the following polynomial analogue of Serre-type divisibility.

\begin{theorem}\label{thm:intro-divisibility}
For every primitive square-free polynomial $Q$,
\begin{equation}\label{eq:intro-divisibility-bound}
\#\{
\mathbf n\in\mathcal B:
Q(\mathbf n)\ne0,
\lambda\nmid a_f(\abs{Q(\mathbf n)})
\}
\ll_{Q,f,\lambda,A}
\frac{V}
{(\log X^*)^{\delta_{Q,f,\lambda}}}.
\end{equation}
In particular, if $\delta_{Q,f,\lambda}>0$, then
$$
\lambda\mid a_f(\abs{Q(\mathbf n)})
$$
for all but a logarithmically sparse set of polynomial parameters.
\end{theorem}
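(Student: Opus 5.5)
Apply \cref{thm:main1} to an indicator function. Define $h(n)=\mathbf 1_{\{\lambda\nmid a_f(n)\}}$. This $h$ is not multiplicative, so we dominate it by a multiplicative function. The natural candidate is the multiplicative $h^\sharp$ defined on prime powers by
\[
h^\sharp(p^\nu)=\begin{cases}\mathbf 1_{\{\lambda\nmid a_f(p^\nu)\}}, & p\nmid N\ell,\\ 1, & p\mid N\ell.\end{cases}
\]
For $p\nmid N\ell$ the Hecke relations give $a_f(n)=\prod_{p^\nu\|n}a_f(p^\nu)$ when $(n,N\ell)=1$, and $\lambda$ is prime in the integer ring $\mathcal O$ of the coefficient field. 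Hence $\lambda\nmid a_f(n)$ forces $\lambda\nmid a_f(p^\nu)$ for every $p^\nu\| n$ with $p\nmid N\ell$.

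The bad primes need some care. Multiplicativity holds across all coprime factors, so $a_f(n)=a_f(n_0)\prod a_f(p^\nu)$, where $n_0$ is the $N\ell$-part of $n$. If $\lambda\nmid a_f(n)$, then each factor is a $\lambda$-unit. Setting $h^\sharp(p^\nu)=1$ at bad primes therefore still gives $h\le h^\sharp$ pointwise. Since $h^\sharp$ takes values in $\{0,1\}$ and is multiplicative, it is admissible in the sense of \cref{sec:sieve}: it is bounded by $1$ on all prime powers, so the Shiu/Nair--Tenenbaum type growth conditions hold trivially.

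Next comes the prime values. For $p\nmid N\ell$ we have $h^\sharp(p)=1-\mathbf 1_{\{\tr\bar\rho_{f,\lambda}(\Frob_p)=0\}}=\Phi(\Frob_p)$ with $\Phi=1-z_{f,\lambda}$. This is a class function on the finite group $G=\Gal(M_QL_{f,\lambda}/\Q)$, and we take $S$ trivial. By \eqref{eq:intro-kappa},
\[
\kappa_Q(\Phi)=\langle\tau_Q,1\rangle_G-\langle\tau_Q,z_{f,\lambda}\rangle_G=r(Q)-\delta_{Q,f,\lambda}.
\]
This uses that $\tau_Q$ inflated to $G$ still has $\langle\tau_Q,1\rangle=r(Q)$ and that both functions are real-valued.

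The step that needs the most care is verifying the Mertens-effective hypothesis \cref{hyp:effectiveCST} unconditionally for $F_Q=\tau_Q\Phi$. Since $S$ is trivial, this hypothesis is a statement about the finite Galois extension $M_QL_{f,\lambda}/\Q$ with the class function $F_Q$. The required estimate
\[
\sum_{p\le x}\frac{F_Q(\Frob_p)}{p}=\kappa_Q(\Phi)\log\log x+C+O\bigl((\log x)^{-1}\bigr)
\]
follows from the effective Chebotarev density theorem with a fixed field. One decomposes $F_Q$ into indicators of conjugacy classes, or equivalently writes it in Artin characters. For each conjugacy class the Chebotarev prime number theorem with de la Vallée Poussin error term holds unconditionally, and partial summation gives the Mertens-type asymptotic. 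This is exactly the input used for the Dedekind-zeta application, so no Artin conjecture is needed because everything is finite Galois.

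Applying \cref{thm:main1} then gives
\[
\#\{\mathbf n:\lambda\nmid a_f(|Q(\mathbf n)|)\}\le\sum h^\sharp(|Q(\mathbf n)|)\ll V(\log X^*)^{-\delta_{Q,f,\lambda}}.
\]
Finally, $\delta_{Q,f,\lambda}\ge0$ since both functions are nonnegative, and the "in particular" clause is immediate whenever $\delta_{Q,f,\lambda}>0$.
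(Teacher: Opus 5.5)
Your proposal is correct and matches the paper's proof: you use the purely Galois weight $1-z_{f,\lambda}$ on $\Gal(M_QL_{f,\lambda}/\Q)$ with trivial $S$, obtain $\kappa_Q=r(Q)-\delta_{Q,f,\lambda}$, verify the effective hypothesis by fixed-field effective Chebotarev, and apply the general sieve bound. The only difference is cosmetic: $\mathbf 1_{\{\lambda\nmid a_f(n)\}}$ is already multiplicative, because $a_f$ is multiplicative and $\lambda$ is prime, and the paper uses it directly, so your majorant $h^\sharp$ is unnecessary, though harmless.
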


\begin{remark}
\cref{thm:intro-divisibility} is unconditional: the relevant prime distribution is purely Galois and therefore follows from the effective Chebotarev density theorem.
\end{remark}

The exponent in \eqref{eq:intro-divisibility-bound} has a concrete Galois-theoretic interpretation. If
$$
M_Q\cap L_{f,\lambda}=\Q,
$$
then the two finite Galois data are independent and
\begin{equation}\label{eq:intro-deltaLD}
\delta_{Q,f,\lambda}=
r(Q)\vartheta_{f,\lambda},
\qquad
\vartheta_{f,\lambda}=
\frac{
\#\{g\in\im\bar\rho_{f,\lambda}:\tr g=0\}
}{
\abs{\im\bar\rho_{f,\lambda}}
}.
\end{equation}
For instance, if
$$\im\bar\rho_{f,\lambda}=\GL_2(\F_q),\qquad q=|k_\lambda|,
$$
then
$$
\#\{g\in\GL_2(\F_q):\tr g=0\}=q^2(q-1),
$$
and hence
$$
\vartheta_{f,\lambda}
=
\frac{q}{q^2-1}.
$$
Thus
\begin{equation}\label{eq:intro-full-image}
\#\{
\mathbf n\in\mathcal B:
Q(\mathbf n)\ne0,
\lambda\nmid a_f(\abs{Q(\mathbf n)})
\}
\ll
\frac{V}
{(\log X^*)^{r(Q)q/(q^2-1)}}.
\end{equation}
The same statement applies, in particular, to elliptic curves whose mod-$\ell$ representation is surjective.

The reducible residual case leads to an especially explicit formula. Suppose that $f$ is congruent modulo $\lambda$ to an Eisenstein eigensystem:
$$
a_f(p)
\equiv
\psi_1(p)+\psi_2(p)p^{k-1}
\pmod\lambda
$$
for all but finitely many primes $p$. Then
$$
\bar\rho_{f,\lambda}^{\mathrm{ss}}
\simeq
\chi_1\oplus\chi_2,
$$
and the trace-zero condition is controlled by the ratio character
$$
\xi=\chi_2\chi_1^{-1}.
$$
Indeed,
$$
\tr\bar\rho_{f,\lambda}(\sigma)=0
\quad\Longleftrightarrow\quad
\xi(\sigma)=-1.
$$
Let
$$
m_\xi=\abs{\im\xi}.
$$

If $M_Q$ is linearly disjoint from the field $L_\xi$ cut out by $\xi$, then
\begin{equation}\label{eq:intro-eisenstein-delta}
\delta_{Q,f,\lambda}
=\begin{cases}
r(Q)/m_\xi,&m_\xi\ \text{even},\\
0,&m_\xi\ \text{odd}.
\end{cases}
\end{equation}
More significantly, the constant remains completely explicit without any linear-disjointness assumption. If
$$
Q=R_1\cdots R_{r(Q)}
$$
is the factorization into irreducible factors over $\Q$, if $H_j$ denotes the stabilizer of a geometric component of $V(R_j)$ in
$$
G_\xi=\Gal(M_QL_\xi/\Q),
$$
and
$$
m_j=\abs{\im(\xi|_{H_j})},
$$
then
\begin{equation}\label{eq:intro-entangled}
\delta_{Q,f,\lambda}=
\sum_{\substack{1\le j\le r(Q)\\ m_j\ {\rm even}}}
\frac1{m_j}.
\end{equation}
Thus the exponent itself records the precise entanglement between the component fields of the polynomial and the Eisenstein character.

Two familiar congruences give concrete illustrations. Ramanujan's congruence
$$
\tau(p)\equiv1+p^{11}\pmod{691}
$$
corresponds to
$$
\xi=\bar\chi_{691}^{\,11}.
$$
Since $\gcd(11,690)=1$, its image has order $690$. Hence, if
$$
M_Q\cap\Q(\zeta_{691})=\Q,
$$
then
$$
\delta_{Q,\Delta,691}=\frac{r(Q)}{690}
$$
and
\begin{equation}\label{eq:intro-Ramanujan}
\#\{
\mathbf n\in\mathcal B:
Q(\mathbf n)\ne0,
691\nmid\tau(\abs{Q(\mathbf n)})
\}
\ll
\frac{V}{(\log X^*)^{r(Q)/690}}.
\end{equation}
Thus Ramanujan's divisibility congruence holds along polynomial values outside a quantitatively controlled logarithmically sparse exceptional set.

As a second example, consider
$$
f_{11}(z)=\eta(z)^2\eta(11z)^2
=\sum_{n\ge1}a_{11}(n)q^n.
$$
Writing
$$
\prod_{m\ge1}(1-q^m)^2(1-q^{11m})^2
=
\sum_{n\ge0}v(n)q^n,
$$
we have
$$
a_{11}(n)=v(n-1).
$$

The integers $v(n)$ may be interpreted as the difference
$$
v(n)=v_e(n)-v_o(n)
$$
between the numbers of certain four-colored partitions with an even and an odd number of parts. Since
$$
a_{11}(p)\equiv p+1\pmod5,
$$
the associated ratio character is $\bar\chi_5$. Therefore, if
$$
M_Q\cap\Q(\zeta_5)=\Q,
$$
then
$$
\delta_{Q,f_{11},5}=\frac{r(Q)}4
$$
and
\begin{equation}\label{eq:intro-colored}
\#\{
\mathbf n\in\mathcal B:
Q(\mathbf n)\ne0,
5\nmid v(\abs{Q(\mathbf n)}-1)
\}
\ll_Q
\frac{V}{(\log X^*)^{r(Q)/4}}.
\end{equation}
Equivalently,
$$
v_e(\abs{Q(\mathbf n)}-1)
\equiv
v_o(\abs{Q(\mathbf n)}-1)
\pmod5
$$
for a density-one set of polynomial parameters.


\subsection{Organization of the paper}

The paper is organized as follows. In \cref{sec:local} we study the local densities of $Q$ and identify their leading term with the permutation character $\tau_Q$ on the geometric irreducible components of $V(Q)$. In \cref{sec:sieve} we combine these local estimates with the multivariable theorem of de la Bretèche and Tenenbaum to prove the general polynomial-value upper bound. In \cref{sec:CST} we introduce the joint Chebotarev--Sato--Tate framework, formulate the qualitative and Mertens-effective distributions, and discuss cases in which they are known. In \cref{sec:applications} we apply the general theorem to non-CM and CM symmetric-power coefficients, Dedekind zeta-function coefficients, and mixed automorphic--Galois weights. The final section is devoted to congruences of modular forms, including the full residual-image case, Eisenstein congruences, Ramanujan's congruence modulo $691$, and the colored-partition example described above.

\section{Polynomial local densities}\label{sec:local}

Let $m\ge1$ and let $Q\in\Z[x_1,\ldots,x_m]$ be primitive and square-free over $\Q$. For any integer $q\ge1$, define
\[
N_Q(q)=\#\{\mathbf x\bmod q:Q(\mathbf x)\equiv0\pmod q\},
\qquad
\delta_Q(q)=\frac{N_Q(q)}{q^m},
\]
and, for primes $p$,
\[
\omega_Q(p)=p\delta_Q(p)=\frac{N_Q(p)}{p^{m-1}}.
\]
Write
\[
Q=R_1\cdots R_r
\]
for the factorization of $Q$ into distinct primitive irreducible factors over $\Q$, and put $r(Q)=r$.

\subsection{The top-weight component representation}

Let
\[
X_Q:=V(Q)\subset \A^m_{\Q}.
\]
Since $Q$ is square-free, $X_Q$ is reduced and has pure dimension $d=m-1$. Over $\overline{\Q}$, write
\[
Q=cQ_1\cdots Q_t,
\]
where the $Q_i$ are the distinct geometric irreducible factors. Define the \emph{component field} $M_Q$ to be the fixed field of the kernel of the natural action
\[
G_{\Q}\longrightarrow\operatorname{Perm}\bigl(\operatorname{Irr}((X_Q)_{\overline{\Q}})\bigr).
\]
Thus $M_Q/\Q$ is the minimal finite Galois extension over which all geometric components are defined. Put
\[
G_Q=\Gal(M_Q/\Q),
\]
and let $\tau_Q$ be the resulting faithful permutation character of $G_Q$ on the geometric irreducible components of $(X_Q)_{\overline{\Q}}$.

When $m=1$ and $Q$ is irreducible, let $\alpha$ be a root of $Q$ with $F=\Q(\alpha)$. Let $M_Q$ be the splitting field of $Q$, and write $H=\Gal(M_Q/F)$. Then the corresponding permutation character is
\[
\tau_Q=\Ind_H^{G_Q}\1.
\]

\begin{lemma}\label{lem:topweight}
For every sufficiently large prime $p$, we have
\begin{equation}\label{eq:NQp}
N_Q(p)=p^{m-1}\tau_Q(\Frob_p)+O_Q(p^{m-3/2}).
\end{equation}
Equivalently,
\begin{equation}\label{eq:omegaFrob}
\omega_Q(p)=\tau_Q(\Frob_p)+O_Q(p^{-1/2}).
\end{equation}
If $m=1$, then at every sufficiently good prime the identity in \eqref{eq:omegaFrob} is exact. Moreover,
\[
\langle\tau_Q,1\rangle_{G_Q}=r(Q),
\]
and consequently
\begin{equation}\label{eq:omegaMertens}
\sum_{p\le x}\frac{\omega_Q(p)}p=r(Q)\log\log x+O_Q(1).
\end{equation}
\end{lemma}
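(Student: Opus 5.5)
The plan is to count points component by component. First spread out over $\Z[1/D]$, with $D$ depending only on $Q$, so that several properties hold after reduction modulo $p\nmid D$. The geometric factors $Q_1,\dots,Q_t$ have coefficients in $\mathcal O_{M_Q}[1/D]$. Each $Q_i$ stays absolutely irreducible modulo every prime $\mathfrak P\mid p$ of $M_Q$; this is Noether's theorem on the constructibility of absolute irreducibility, applied to finitely many polynomials. Distinct $Q_i,Q_j$ remain coprime modulo $\mathfrak P$, so that $V(\bar Q_i)\cap V(\bar Q_j)$ has dimension $\le m-2$. Finally, $p$ is unramified in $M_Q$.

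Fix $\mathfrak P\mid p$ with Frobenius $\sigma=\Frob_{\mathfrak P}\in G_Q$. Then $\bar X_Q\otimes\overline{\F}_p$ is the union of the components $\bar V_i=V(\bar Q_i)$, and the geometric Frobenius permutes them exactly as $\sigma$ permutes the $Q_i$, by compatibility of reduction with the Galois action. An $\F_p$-point lying on a single component $\bar V_i$ must lie on $\bar V_{\sigma(i)}$ as well. So if $\sigma(i)\ne i$, the $\F_p$-points of $\bar V_i$ lie in a pairwise intersection, which has dimension $\le m-2$. Such intersections contribute $O(p^{m-2})$ points by the trivial Lang--Weil/Schwarz--Zippel bound for varieties of bounded degree.

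For $\sigma(i)=i$, the component $\bar V_i$ is defined over $\F_p$ and is geometrically irreducible of dimension $m-1$ and of degree bounded in terms of $Q$. The Lang--Weil estimate then gives $|\bar V_i(\F_p)|=p^{m-1}+O_Q(p^{m-3/2})$. Inclusion--exclusion, with the overlaps bounded by $O(p^{m-2})$, yields
\[
N_Q(p)=p^{m-1}\#\{i:\sigma(i)=i\}+O_Q(p^{m-3/2})=p^{m-1}\tau_Q(\Frob_p)+O_Q(p^{m-3/2}).
\]
Dividing by $p^{m-1}$ gives \eqref{eq:omegaFrob}.

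For $m=1$, take $p$ to divide neither the discriminant nor the leading coefficient of $Q$. Then $Q$ has distinct roots modulo $p$, and its roots in $\F_p$ correspond bijectively to the roots in $\overline{\Q}$ fixed by $\Frob_p$. This is exactly the fixed-point count $\tau_Q(\Frob_p)$, with no error term.

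For $\langle\tau_Q,1\rangle=r(Q)$, use Burnside's lemma: the multiplicity of the trivial character in a permutation character equals the number of orbits. The $G_{\Q}$-orbits on $\{Q_1,\dots,Q_t\}$ correspond to the irreducible factors $R_j$ over $\Q$, because the product of the factors in an orbit is Galois-stable, hence defined over $\Q$ up to scalar and irreducible there. Thus there are exactly $r$ orbits.

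Finally, for \eqref{eq:omegaMertens}, write
\[
\sum_{p\le x}\frac{\omega_Q(p)}{p}=\sum_{p\le x}\frac{\tau_Q(\Frob_p)}{p}+O(1).
\]
The error terms $O(p^{-3/2})$ are summable, and the finitely many bad primes contribute $O(1)$ since $\omega_Q(p)\le p$, as $Q$ is not identically zero modulo $p$ by primitivity. The main sum is handled by the Chebotarev density theorem in Mertens form. Decompose $\tau_Q=r\cdot 1+\sum_{\chi\ne1}a_\chi\chi$. For nontrivial irreducible $\chi$, the sum $\sum_{p\le x}\chi(\Frob_p)/p$ is $O(1)$, because $\log L(s,\chi)$ stays bounded as $s\to1^+$ (Brauer: $L(s,\chi)$ is meromorphic and nonvanishing at $s=1$). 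Alternatively, apply Chebotarev with error $O(x/\log^2 x)$ to each conjugacy class and use partial summation.

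The main obstacle is the uniform reduction step. One has to justify that for all large $p$ the reductions of the geometric components stay absolutely irreducible, distinct, and permuted compatibly by Frobenius, with Lang--Weil constants uniform in $p$. I would cite the constructibility of geometric irreducibility in families (EGA IV 9.7.7) to get this.
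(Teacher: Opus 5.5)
Your proof is correct, but for $m\ge2$ it takes a different route from the paper. The paper does not count points component by component. It invokes Serre's top-weight description of $N_X(p)$ (\emph{Lectures on $N_X(p)$}, Prop.~7.10 and Cor.~7.11): the weight-$2(m-1)$ part of the point-counting representation of $X_Q$ is the Tate-twisted permutation representation on the geometric components, and everything of lower weight contributes $O(p^{m-3/2})$.

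You instead prove this statement directly for hypersurfaces, in four steps:
\begin{enumerate}
\item spread the geometric factors $Q_i$ out over $\mathcal O_{M_Q}[1/D]$;
\item use Noether's theorem (or EGA~IV~9.7.7), together with coprimality, to keep the $Q_i$ absolutely irreducible and pairwise coprime modulo each $\mathfrak P\mid p$;
\item observe that the $\F_p$-points of a component moved by $\Frob_{\mathfrak P}$ lie in an intersection of dimension $\le m-2$;
\item apply Lang--Weil, with constants depending only on $m$ and $\deg Q$, to the Frobenius-stable components.
\end{enumerate}

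What each route buys: yours is more elementary and self-contained, and it makes the identification of the leading coefficient with $\tau_Q(\Frob_p)$ explicit. The price is the uniform-reduction bookkeeping you correctly flag as the main work. The paper's citation is shorter, applies verbatim to arbitrary reduced varieties, and underwrites the ``top-weight'' language used elsewhere.

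The rest coincides with the paper's proof: the exact count for $m=1$ at primes dividing neither the discriminant nor the leading coefficient, Burnside's lemma with orbits matched to the factors $R_j$, and Chebotarev plus partial summation for \eqref{eq:omegaMertens}.

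Two small tightenings if you use the $L$-function route rather than effective Chebotarev. First, you need $L(s,\chi)$ holomorphic, not just meromorphic, at $s=1$. This holds because its order there is $-\langle\chi,1\rangle=0$ for nontrivial irreducible $\chi$. Second, boundedness of $\log L(s,\chi)$ as $s\to1^+$ yields bounded partial sums $\sum_{p\le x}\chi(\Frob_p)/p$ only after an elementary comparison at $s=1+1/\log x$, which works because the coefficients are bounded.
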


\begin{proof}
Assume first that $m\ge2$ and put $d=m-1$. Serre's top-weight description of $N_X(p)$ in \cite[Chapter 7]{SerreNXp} shows that, for a reduced $d$-dimensional variety $X/\Q$, the weight-$2d$ part of the point-counting representation is the Tate twist of the permutation representation on the $d$-dimensional geometric irreducible components. In particular, \cite[Proposition 7.10 and Corollary 7.11]{SerreNXp} give, outside a finite set of primes,
\begin{equation}\label{eq:SerreNX}
N_X(p)=\varepsilon_X(\Frob_p)p^d+O_X(p^{d-1/2}),
\end{equation}
where $\varepsilon_X$ is the permutation character on the top-dimensional geometric components. Applying \eqref{eq:SerreNX} with $X=X_Q$ gives \eqref{eq:NQp} and \eqref{eq:omegaFrob}.

If $m=1$, the geometric components are the distinct roots of $Q$. At every sufficiently good prime, $N_Q(p)$ is exactly the number of geometric roots fixed by Frobenius, hence $N_Q(p)=\tau_Q(\Frob_p)$.

By Burnside's lemma,
\[
\langle\tau_Q,1\rangle_{G_Q}
=\#\bigl(G_Q\backslash\{Q_1,\ldots,Q_t\}\bigr).
\]
The Galois orbits of geometric components are in bijection with the irreducible factors of $Q$ over $\Q$, and therefore
\[
\langle\tau_Q,1\rangle_{G_Q}=r(Q).
\]
Finally, the Chebotarev density theorem, in its effective form for the fixed extension $M_Q/\Q$, and partial summation give
\[
\sum_{p\le x}\frac{\tau_Q(\Frob_p)}p=r(Q)\log\log x+O_Q(1).
\]
The difference between this sum and the corresponding sum with $\omega_Q(p)$ is absolutely convergent by \eqref{eq:omegaFrob}. This proves \eqref{eq:omegaMertens}.
\end{proof}

\begin{remark}
If $Q$ is irreducible over $\Q$, then the Galois action on its geometric components is transitive, so $r(Q)=1$ even when $Q$ is geometrically reducible.
\end{remark}

\subsection{Prime-power densities}

The preceding subsection describes the first-order density modulo $p$. We next record the estimates for higher powers of $p$ that will be used in the sieve argument.

\begin{lemma}\label{lem:primepowers}
Let $D=\deg Q$. For every prime $p$ and every $\nu\ge2$,
\begin{equation}\label{eq:primepowers}
\delta_Q(p^2)\ll_Q p^{-2},
\qquad
\delta_Q(p^\nu)\ll_Q (\nu+1)^{m-1}p^{-\nu/D}.
\end{equation}
Consequently, for every fixed $B\ge0$,
\begin{equation}\label{eq:primepowertail}
\sum_{\nu\ge2}(\nu+1)^B\delta_Q(p^\nu)\ll_{Q,B}p^{-2}.
\end{equation}
\end{lemma}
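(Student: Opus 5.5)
The plan is to prove the three assertions in order: the $p^2$ bound, then the general $p^\nu$ bound, and then to combine them with monotonicity to obtain \eqref{eq:primepowertail}. Throughout, finitely many primes can be absorbed into the implied constants, since for each fixed $p$ the quantities involved are trivially bounded.

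\emph{The bound for $p^2$.} For $p$ large, the reduction $\bar Q\in\F_p[x_1,\ldots,x_m]$ is still square-free. Hence its singular locus $\{\bar Q=\nabla\bar Q=0\}$ has dimension at most $m-2$. By a Lang--Weil type bound (or an elementary Schwarz--Zippel count with degree depending only on $D$), this locus has $\ll_Q p^{m-2}$ points in $\F_p^m$. Split $N_Q(p^2)$ according to the reduction $\mathbf x\bmod p$:
\begin{itemize}[leftmargin=*]
\item If $\mathbf x\bmod p$ is a smooth zero of $\bar Q$, Taylor expansion shows that exactly $p^{m-1}$ of its $p^m$ lifts modulo $p^2$ satisfy $Q\equiv0\pmod{p^2}$. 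This contributes at most $N_Q(p)\,p^{m-1}\ll p^{2m-2}$, using \cref{lem:topweight}.
\item If $\mathbf x\bmod p$ is singular, we use the trivial count of $p^m$ lifts. This contributes $\ll p^{m-2}\cdot p^m=p^{2m-2}$.
\end{itemize}
Dividing by $p^{2m}$ gives $\delta_Q(p^2)\ll p^{-2}$.

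\emph{The bound for $p^\nu$.} This is where I expect the main obstacle: obtaining uniformity in both $p$ and $\nu$ when specializations lose content. I would induct on the number of variables, starting from a Stewart-type one-variable estimate. That estimate says that if $f\in\Z[x]$ has degree at most $D$ and $p$-adic content $p^e$, then $\#\{x\bmod p^\nu: p^\nu\mid f(x)\}\ll_D p^{e/D}\,p^{\nu(1-1/D)}$. For the general case, write $Q$ as a polynomial in $x_1$ whose coefficients are polynomials in $(x_2,\ldots,x_m)$. Since $Q$ is primitive, some coefficient $c(x_2,\ldots,x_m)$ is nonzero modulo $p$. Stratify the tuples $(x_2,\ldots,x_m)\bmod p^\nu$ by $e=v_p$ of the content of the specialization, where $0\le e\le\nu$. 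Specializations with content at least $p^e$ force $p^e\mid c(x_2,\ldots,x_m)$, and $c$ is itself primitive in $m-1$ variables. By the inductive hypothesis, such tuples have density $\ll (e+1)^{m-2}p^{-e/D}$. In each stratum the fibre count in $x_1$ has density $\ll p^{(e-\nu)/D}$. Summing over the $\nu+1$ values of $e$ gives
\[
\delta_Q(p^\nu)\ll (\nu+1)^{m-1}p^{-\nu/D}.
\]
If the induction bookkeeping with the content exponents fails, I would fall back on citing the known multivariable form of this estimate, which the sieve literature (Nair--Tenenbaum, de la Bretèche--Tenenbaum) states in exactly this shape.

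\emph{The tail bound \eqref{eq:primepowertail}.} First, $\delta_Q(p^\nu)$ is nonincreasing in $\nu$. Indeed, each solution modulo $p^{\nu}$ with $\nu\ge2$ reduces to a solution modulo $p^2$, and there are $p^{m(\nu-2)}$ residues modulo $p^\nu$ above each residue modulo $p^2$. Hence $\delta_Q(p^\nu)\le\delta_Q(p^2)$ for all $\nu\ge2$. We then split the sum at $\nu=2D$:
\begin{itemize}[leftmargin=*]
\item For $2\le\nu\le 2D$, there are boundedly many terms, each $\ll_B p^{-2}$ by monotonicity and the $p^2$ bound.
\item For $\nu>2D$, we have $\sum_{\nu>2D}(\nu+1)^{B+m-1}p^{-\nu/D}\ll_{Q,B}p^{-(2D+1)/D}\le p^{-2}$. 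Here the constant is uniform in $p\ge2$, because the terms decay geometrically with ratio at most $2^{-1/D}$.
\end{itemize}
Together these give \eqref{eq:primepowertail}.
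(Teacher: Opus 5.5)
Your proof is correct. The $p^2$ estimate is the same as the paper's: smooth points of the square-free reduction have exactly $p^{m-1}$ lifts, and the singular locus has $\ll p^{m-2}$ points, each with the trivial $p^m$ lifts. The tail argument is also the paper's: use monotonicity $\delta_Q(p^\nu)\le\delta_Q(p^2)$ for boundedly many small $\nu$, then geometric decay beyond. The paper cuts at a fixed $\nu_0>3D$ rather than at $2D$, which makes no difference.

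You genuinely differ on the general bound $\delta_Q(p^\nu)\ll(\nu+1)^{m-1}p^{-\nu/D}$. The paper simply quotes \cite[Lemma 2.8]{CKPS2025} or \cite[Lemma 4.2]{BT2026}. You instead derive it by induction on the number of variables from Stewart's one-variable bound, stratifying $(x_2,\dots,x_m)$ by the $p$-adic content of the specialization in $x_1$. The induction does close. The stratum of content exactly $p^e$ (capped at $e=\nu$) has density at most $\delta_c(p^e)$, and
\[
\sum_{0\le e\le\nu}(e+1)^{m-2}p^{-e/D}\,p^{-(\nu-e)/D}\le(\nu+1)^{m-1}p^{-\nu/D}.
\]

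However, the induction must run over a larger class than the lemma's hypotheses. The coefficient $c$ you select is in general neither primitive nor square-free; only $c\not\equiv0\pmod p$ is guaranteed, so your remark that ``$c$ is itself primitive'' should be dropped. State the inductive claim as: for every $f$ in $k$ variables of total degree at most $D$ with $f\not\equiv0\pmod p$,
\[
\delta_f(p^\nu)\le C(D,k)(\nu+1)^{k-1}p^{-\nu/D}.
\]
You should also note that Stewart's bound depends only on the degree. That is what lets it apply uniformly to the specializations $Q(\cdot,\mathbf y)$, which vary with $\mathbf y$.

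With that formulation your argument is self-contained and gives an implied constant depending only on $D$ and $m$. The paper's citation buys brevity but hides exactly this bookkeeping.
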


\begin{proof}
Outside a finite set of primes, the reduction of $Q$ remains square-free. The argument of \cite[Lemma 2.6]{CKPS2025} gives
\[
N_Q(p^2)\ll_Q p^{2m-2}.
\]
Indeed, for $m\ge2$ the nonsingular locus modulo $p$ has $O_Q(p^{m-1})$ points, each with exactly $p^{m-1}$ lifts modulo $p^2$, while the singular locus has $O_Q(p^{m-2})$ points (for instance by the Lang--Weil estimate applied to the singular subvariety) \cite{LangWeil1954} and even the trivial bound of $p^m$ lifts over each such point contributes only $O_Q(p^{2m-2})$. For $m=1$, ordinary Hensel lifting gives the same normalized estimate. Thus $\delta_Q(p^2)\ll_Qp^{-2}$.

For general prime powers, \cite[Lemma 2.8]{CKPS2025}, or the explicit form \cite[Lemma 4.2]{BT2026}, gives
\[
\delta_Q(p^\nu)\ll_Q(\nu+1)^{m-1}p^{-\nu/D}.
\]
Choose a fixed $\nu_0>3D$. For $2\le\nu\le\nu_0$ we have
\[
\delta_Q(p^\nu)\le\delta_Q(p^2)\ll_Qp^{-2},
\]
while for $\nu>\nu_0$ the second estimate in \eqref{eq:primepowers} gives a uniformly convergent geometric tail. This proves \eqref{eq:primepowertail}.
\end{proof}

\section{A unified polynomial-value upper bound}\label{sec:sieve}

Let
\[
\mathcal B=\prod_{j=1}^m(X_j-Y_j,X_j]\cap\Z^m,
\qquad
V=\prod_{j=1}^mY_j,
\qquad
X^*=\max_jX_j,
\]
where $1\le Y_j\le X_j$ as before.

\begin{definition}
A nonnegative multiplicative function $h$ is called \emph{admissible} if $h(p)$ is uniformly bounded and there exists a fixed $B_0\ge0$ such that
\begin{equation}\label{eq:admissible}
h(p^\nu)\ll_h(\nu+1)^{B_0}
\qquad(p\text{ prime},\ \nu\ge0).
\end{equation}
\end{definition}

\begin{lemma}\label{lem:admissibleM1}
Every admissible function belongs, after choosing $\varepsilon>0$ sufficiently small, to a class $\mathcal M_1(A_0,B_\varepsilon,\varepsilon)$ of de la Bret\`eche--Tenenbaum \cite[Section 2]{BT2026}.
\end{lemma}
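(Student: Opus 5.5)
The plan is to verify directly the defining conditions of the de la Bret\`eche--Tenenbaum class $\mathcal M_1(A_0,B,\varepsilon)$. As in \cite[Section 2]{BT2026} (in the spirit of Nair--Tenenbaum), membership asks for two inequalities for a nonnegative multiplicative $h$. The first is a uniform prime-power bound $h(p^\nu)\le A_0^{\nu}$. The second is a sub-polynomial bound $h(n)\le B n^{\varepsilon}$ for all $n\ge1$. Some versions of the class instead demand the bound on $h(ab)$ in terms of $h(a)$ and $B b^{\varepsilon}$ when $(a,b)=1$; for multiplicative $h$ this reduces to $h(b)\le Bb^{\varepsilon}$. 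I will state the argument in this form and adapt it to the exact formulation of \cite{BT2026}.

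First I check the prime-power condition. By \eqref{eq:admissible} there is a constant $C_h$ with $h(p^\nu)\le C_h(\nu+1)^{B_0}$. Since $(\nu+1)^{B_0}\le 2^{B_0\nu}$ for $\nu\ge1$, choosing $A_0=\max(1,C_h)2^{B_0}$ gives $h(p^\nu)\le A_0^\nu$ for all $p$ and all $\nu\ge1$. The prime case $\nu=1$ is also covered directly by the boundedness of $h(p)$.

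Next I check the divisor-type bound, which is where $\varepsilon$ enters. Write $n=\prod p^{\nu_p}$, so that $h(n)\le\prod_{p\mid n}C_h(\nu_p+1)^{B_0}$. I split the primes at a threshold $P=P(\varepsilon)$.
\begin{itemize}
\item For $p>P$ (chosen with $p^{\varepsilon}\ge C_h 2^{B_0}$), each factor satisfies $C_h(\nu+1)^{B_0}\le (C_h2^{B_0})^{\nu}\le p^{\varepsilon\nu}$.
\item For $p\le P$, the quantity $C_h(\nu+1)^{B_0}p^{-\varepsilon\nu}$ is bounded uniformly in $\nu$ by a constant $c(p,\varepsilon)$, because polynomial growth is beaten by exponential decay.
\end{itemize}
Multiplying over $p\le P$ gives $h(n)\le B_\varepsilon n^{\varepsilon}$ with $B_\varepsilon=\prod_{p\le P}\max(1,c(p,\varepsilon))$. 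This is the standard proof that $\tau(n)^{B_0}\ll n^\varepsilon$, and it is the reason the constant is written $B_\varepsilon$.

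The only real obstacle is bookkeeping. One must match the exact shape of the axioms in \cite[Section 2]{BT2026}, for example whether the prime-power bound is required as $A_0^{\nu}$ or $A_0(\nu+1)^{B}$, and any constraint that $\varepsilon$ be small relative to the degree or dimension. The phrase ``after choosing $\varepsilon>0$ sufficiently small'' suggests such a constraint. Since the bound above holds for every $\varepsilon>0$, any smallness requirement imposed there can be met by taking $\varepsilon$ small and then fixing $B_\varepsilon$ accordingly.
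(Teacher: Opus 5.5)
Your proposal is correct and follows essentially the same route as the paper: both use $\nu+1\le 2^{\nu}$ to get the $A_0^{\Omega(a)}$ bound and the standard divisor-bound argument to get $B_\varepsilon a^{\varepsilon}$, and by multiplicativity ($h(ab)=h(a)h(b)$ for $(a,b)=1$) your two bounds give exactly the condition $h(ab)\le\min\{A_0^{\Omega(a)},B_\varepsilon a^{\varepsilon}\}h(b)$ that the paper uses as the definition of $\mathcal M_1$. The only cosmetic fix is to use $\max(1,C_h)$ in place of $C_h$ in your threshold $p^{\varepsilon}\ge C_h2^{B_0}$, as you already did when choosing $A_0$.
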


\begin{proof}
Let $h$ be admissible. Multiplicativity and \eqref{eq:admissible} give, for $(a,b)=1$,
\[
h(ab)\le h(b)\prod_{p^\nu\parallel a} C_h(\nu+1)^{B_0}.
\]
Since $\nu+1\le 2^\nu$, the product is at most $A_0^{\Omega(a)}h(b)$ for a suitable constant $A_0$. On the other hand, the standard divisor bound (together with $C_h^{\omega(a)}\ll_{h,\varepsilon}a^{\varepsilon/2}$) gives, for every fixed $\varepsilon>0$,
\[
C_h^{\omega(a)}\prod_{p^\nu\parallel a}(\nu+1)^{B_0}\ll_{h,\varepsilon}a^\varepsilon.
\]
Hence
\[
h(ab)\le \min\{A_0^{\Omega(a)},B_\varepsilon a^\varepsilon\}h(b),
\]
which is precisely the growth condition required in the definition of $\mathcal M_1$.
\end{proof}

Our first result is the following theorem.

\begin{theorem}\label{thm:unified}
Let $m\ge1$, let $Q\in\Z[x_1,\ldots,x_m]$ be primitive and square-free over $\Q$, and let $h$ be admissible. Under (B),
\begin{equation}\label{eq:unified}
\sum_{\substack{\mathbf n\in\mathcal B\\Q(\mathbf n)\ne0}}
h(\abs{Q(\mathbf n)})
\ll_{Q,h,A}
V\exp\left\{\sum_{p\le X^*}\frac{\omega_Q(p)(h(p)-1)}p\right\}.
\end{equation}
All fixed bad primes are absorbed into the implied constant.
\end{theorem}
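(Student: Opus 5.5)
The proof will be a direct application of the multivariable Nair--Tenenbaum type theorem of de la Bret\`eche and Tenenbaum \cite{BT2026}, with the paper's local input supplying its hypotheses. First, \cref{lem:admissibleM1} places $h$ in a class $\mathcal M_1(A_0,B_\varepsilon,\varepsilon)$ for small fixed $\varepsilon$. Next, I would check the polynomial-side hypotheses of \cite{BT2026}: primitivity and square-freeness of $Q$, and the local density bounds. \cref{lem:topweight} shows $\omega_Q(p)$ is bounded for large $p$, since $\tau_Q\le t$. \cref{lem:primepowers} gives $\delta_Q(p^\nu)\ll(\nu+1)^{m-1}p^{-\nu/D}$, and \eqref{eq:primepowertail} gives summability of the prime-power contributions. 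Condition (B) is exactly what makes the box "not too lopsided," as required for their theorem. Their output should have the shape
\[
\sum_{\mathbf n\in\mathcal B,\,Q(\mathbf n)\ne0}h(\abs{Q(\mathbf n)})\ll V\prod_{p\le X^*}\Bigl(1-\delta_Q(p)\Bigr)\sum_{\nu\ge0}h(p^\nu)\bigl(\delta_Q(p^\nu)-\delta_Q(p^{\nu+1})\bigr)
\]
or an equivalent Euler product, uniformly under (B). Here I would use $\log Y_j\gg\log X^*$ so the range of the product can be taken up to $X^*$ at the cost of a constant.

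The second step converts this Euler product into the exponential in \eqref{eq:unified}. For $p$ larger than a fixed $p_0(Q,h)$, I would expand the local factor. The $\nu=0$ term gives $1-\delta_Q(p)+O(\delta_Q(p^2))$. The $\nu=1$ term gives $h(p)\delta_Q(p)+O(\delta_Q(p^2))$. The terms with $\nu\ge2$ contribute $O_{Q,h}(p^{-2})$ by \eqref{eq:admissible} and \eqref{eq:primepowertail}. So the factor is $1+(h(p)-1)\omega_Q(p)/p+O(p^{-2})$. Because $h(p)$ and $\omega_Q(p)$ are bounded, taking logarithms gives
\[
\log(\text{factor}) = \frac{\omega_Q(p)(h(p)-1)}p+O\bigl(p^{-2}\bigr),
\]
and the error is summable over $p$. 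For the finitely many $p\le p_0$, I would bound each local factor by a constant depending on $Q,h$ (using $h(p^\nu)\ll(\nu+1)^{B_0}$ and $\delta_Q(p^\nu)\ll(\nu+1)^{m-1}p^{-\nu/D}$). I would also bound $\omega_Q(p)(h(p)-1)/p$ trivially. These contributions go into the implied constant, which is the sense of "bad primes absorbed."

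The main obstacle is matching the precise hypotheses and normalization of the theorem in \cite{BT2026}. Its admissible box shape, its treatment of primes dividing the content or discriminant, and whether the bound is stated with $\rho_Q(p)/p$ or with a different density must all be reconciled with condition (B) and with $\omega_Q(p)$. The uniformity with respect to $Y_j$ versus $X_j$ also needs care. I would first try to verify that their hypothesis on the polynomial is exactly nonvanishing of the discriminant-type quantity guaranteed by square-freeness. If instead their theorem is stated with $\log V$ or $\log\min Y_j$ in place of $\log X^*$, (B) makes these comparable, and I would use this to justify the switch. Shifting the endpoint of $\sum_p\omega_Q(p)(h(p)-1)/p$ from $\min Y_j$ to $X^*$ costs only $O(\log A)$, by bounding $\omega_Q$ and $h$ and using Mertens.
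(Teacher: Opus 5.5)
Your overall route is the paper's: invoke \cite[Theorem 3.1]{BT2026} via \cref{lem:admissibleM1}, reduce each local factor to $1+(h(p)-1)\delta_Q(p)+O(p^{-2})$, and use (B) to move cutoffs at bounded cost. The gap is the step you defer as the ``main obstacle'', and your guessed output is not what that theorem gives. It gives
\[
\sum_{\substack{\mathbf n\in\mathcal B\\Q(\mathbf n)\ne0}}h(\abs{Q(\mathbf n)})\ll V\,E_Q(v)\prod_{g<p\le X_0}(1-\delta_Q(p)),
\]
with $X_0=\min_jX_j$ and $v=X_1+\cdots+X_m$. Here $E_Q(v)$ is a \emph{truncated} sum over $r$-tuples $\mathbf s$, one coordinate per irreducible factor $R_j$ of $Q$, with $s_1\cdots s_r\le v$ and weights $h(s_1\cdots s_r)\rho_Q^\#(\mathbf s)/K(\mathbf s)^m$. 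Converting this into your Euler product is the real content of the proof, and it has two steps. First, use positivity to drop $s_1\cdots s_r\le v$ while keeping $P^+(s_1\cdots s_r)\le v$; multiplicativity and CRT then give $E_Q(v)\le\prod_{p\le v}L_p$. Second, observe that for $\mathbf e\ne0$ the ratio $\rho_Q^\#(p^{e_1},\ldots,p^{e_r})/p^{m(\max_ie_i+1)}$ is exactly the $p$-adic density of the conditions $v_p(R_i(\mathbf x))=e_i$ for all $i$. The coprimality clause in $\rho_Q^\#$ is what forces $p\nmid R_i$ when $e_i=0$. Since $v_p(Q)=\sum_iv_p(R_i)$, summing these densities over $\abs{\mathbf e}=\nu$ gives $\delta_Q(p^\nu)-\delta_Q(p^{\nu+1})$. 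Hence $L_p=1+\sum_{\nu\ge1}h(p^\nu)\bigl(\delta_Q(p^\nu)-\delta_Q(p^{\nu+1})\bigr)$. Only after this does your expansion apply, to $(1-\delta_Q(p))L_p$ for $p\le X_0$. The primes $X_0<p\le v$ carry $L_p$ with no sieve factor; they cost $O(1)$ by (B) and Mertens, as in your last paragraph.

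Separately, your displayed local factor double-counts. The $\nu=0$ term of $\sum_{\nu\ge0}$ is already $1-\delta_Q(p)$, so multiplying by a further $(1-\delta_Q(p))$ would give $1+(h(p)-2)\delta_Q(p)+O(p^{-2})$ and the wrong exponent. Your expansion in fact treats the factor as the bare sum $\sum_{\nu\ge0}h(p^\nu)\bigl(\delta_Q(p^\nu)-\delta_Q(p^{\nu+1})\bigr)=L_p-\delta_Q(p)$. That agrees with $(1-\delta_Q(p))L_p$ up to $O(p^{-2})$, so your conclusion survives once the display is corrected. But the exact form of the local factor is the whole point here, so it must be derived from the theorem's actual output rather than postulated.
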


\begin{proof}
Write
\[
Q=R_1\cdots R_r
\]
as the product of its distinct primitive irreducible factors over $\Q$. Apply \cite[Theorem 3.1]{BT2026} with one polynomial $Q$ and one arithmetic function $F=h$. Since $Q$ is square-free, all exponents of the irreducible factors in the notation of that theorem are one, and hence
\[
\widehat F(s_1,\ldots,s_r)=h(s_1\cdots s_r).
\]
By \cref{lem:admissibleM1}, the arithmetic function lies in the required class $\mathcal M_1$. Condition (B) implies the size hypotheses of \cite[Theorem 3.1]{BT2026}: one may take $\alpha=1/A$ in $X_j^\alpha\le Y_j$ and any fixed $0<\beta<1/A$ in the lower bound for $\min_j X_j$ relative to $\max_j X_j$ (the finitely many bounded values of $X^*$ being absorbed into the implied constant). We then choose the parameter $\varepsilon$ in $\mathcal M_1$ sufficiently small to satisfy the restriction in \cite[(3.1)]{BT2026}. We obtain
\begin{equation}\label{eq:BTapplication}
\sum_{\substack{\mathbf n\in\mathcal B\\Q(\mathbf n)\ne0}}
h(\abs{Q(\mathbf n)})
\ll V E_Q(v)\prod_{g<p\le X_0}(1-\delta_Q(p)),
\end{equation}
where
\[
X_0=\min_jX_j,
\qquad
v=X_1+\cdots+X_m,
\qquad
g=\deg Q,
\]
and
\[
E_Q(v)=
\sum_{\substack{\mathbf s=(s_1,\ldots,s_r)\in\mathbf N^r\\s_1\cdots s_r\le v}}
 h(s_1\cdots s_r)\frac{\rho_Q^\#(\mathbf s)}{K(\mathbf s)^m}.
\]
Here
\begin{equation}\label{eq:Kdef}
K(\mathbf s)=[s_1\kappa(s_1),\ldots,s_r\kappa(s_r)],
\end{equation}
where $[\,\cdot\,]$ denotes least common multiple and $\kappa(s)$ is the square-free kernel of $s$, while
\begin{equation}\label{eq:rhodef}
\rho_Q^\#(\mathbf s)
:=
\sum_{\substack{\boldsymbol\xi\in[1,K(\mathbf s)]^m\\
 s_j\parallel R_j(\boldsymbol\xi),\
 (R_j(\boldsymbol\xi)/s_j,\prod_i s_i)=1\ (1\le j\le r)}}1.
\end{equation}

We make the local structure of $E_Q$ explicit. Fix a prime $p$ outside the finite exceptional set coming from the chosen integral factorization and the bad reductions of the $R_i$. Let
\[
\mathbf e=(e_1,\ldots,e_r)\in\Z_{\ge0}^r,
\qquad
\abs{\mathbf e}=e_1+\cdots+e_r,
\qquad
M(\mathbf e)=\max_i e_i.
\]
For $\mathbf e\ne0$, \eqref{eq:Kdef} gives
\[
K(p^{e_1},\ldots,p^{e_r})=p^{M(\mathbf e)+1}.
\]
Moreover, \eqref{eq:rhodef} implies that
\[
D_p(\mathbf e):=
\frac{\rho_Q^\#(p^{e_1},\ldots,p^{e_r})}{p^{m(M(\mathbf e)+1)}}
\]
is precisely the $p$-adic density of the exact valuation conditions
\[
v_p(R_i(\mathbf x))=e_i\qquad(1\le i\le r).
\]
The coprimality condition in \eqref{eq:rhodef} is important here: if $e_i=0$ while $p$ occurs in another coordinate of $\mathbf e$, it also forces $p\nmid R_i(\mathbf x)$.

Since $Q=R_1\cdots R_r$ is square-free,
\[
v_p(Q(\mathbf x))=\sum_{i=1}^rv_p(R_i(\mathbf x)).
\]
Hence the exact valuation vectors with $\abs{\mathbf e}=\nu$ partition the set $v_p(Q)=\nu$, and therefore
\begin{equation}\label{eq:valuationpartition}
\sum_{\abs{\mathbf e}=\nu}D_p(\mathbf e)
=\delta_Q(p^\nu)-\delta_Q(p^{\nu+1}),
\qquad \nu\ge1.
\end{equation}

All summands in $E_Q$ are nonnegative. Thus we may remove the product-size condition $s_1\cdots s_r\le v$ while retaining the prime-support condition $P^+(s_1\cdots s_r)\le v$, where $P^+(n)$ denotes the largest prime factor of $n$. Multiplicativity and the Chinese remainder theorem then give
\[
E_Q(v)\le\prod_{p\le v}L_p,
\]
where, by \eqref{eq:valuationpartition},
\begin{equation}\label{eq:Lp}
L_p=1+\sum_{\nu\ge1}h(p^\nu)
\bigl(\delta_Q(p^\nu)-\delta_Q(p^{\nu+1})\bigr).
\end{equation}
Separating $\nu=1$ and using \cref{lem:primepowers},
\[
h(p)\bigl(\delta_Q(p)-\delta_Q(p^2)\bigr)
=h(p)\delta_Q(p)+O_{Q,h}(p^{-2}),
\]
while positivity, admissibility, and \eqref{eq:primepowertail} give
\[
\sum_{\nu\ge2}h(p^\nu)
\bigl(\delta_Q(p^\nu)-\delta_Q(p^{\nu+1})\bigr)
\ll_{Q,h}p^{-2}.
\]
Consequently,
\[
L_p=1+h(p)\delta_Q(p)+O_{Q,h}(p^{-2}),
\]
and, for primes in the range of the sieve product,
\begin{equation}\label{eq:sievelocal}
(1-\delta_Q(p))L_p
=1+(h(p)-1)\delta_Q(p)+O_{Q,h}(p^{-2}).
\end{equation}

It remains to compare the cutoffs. From \eqref{eq:BTapplication} and \eqref{eq:sievelocal},
\begin{align}\label{eq:cutoffcompare}
E_Q(v)\prod_{g<p\le X_0}(1-\delta_Q(p))
&\ll
\prod_{p\le X_0}
\left(1+(h(p)-1)\delta_Q(p)+O(p^{-2})\right)\notag\\
&\quad\times
\prod_{X_0<p\le v}
\left(1+h(p)\delta_Q(p)+O(p^{-2})\right),
\end{align}
with the finitely many primes $p\le g$ absorbed into the constant. Condition (B) gives
\[
(X^*)^{1/A}\le X_0\le X^*,
\qquad
X^*\le v\le mX^*.
\]
Since $h(p)$ is bounded and $\delta_Q(p)\ll_Qp^{-1}$,
\[
\sum_{X_0<p\le v}h(p)\delta_Q(p)\ll_{Q,h,A,m}1
\]
and likewise
\[
\sum_{X_0<p\le X^*}\abs{h(p)-1}\delta_Q(p)\ll_{Q,h,A}1.
\]
Thus both the unmatched range $X_0<p\le v$ and the replacement of $X_0$ by $X^*$ cost only a constant factor. Taking logarithms in the first product in \eqref{eq:cutoffcompare} and using $\sum_pp^{-2}<\infty$ yields
\[
E_Q(v)\prod_{g<p\le X_0}(1-\delta_Q(p))
\ll_{Q,h,A}
\exp\left\{\sum_{p\le X^*}(h(p)-1)\delta_Q(p)\right\}.
\]
Since $\delta_Q(p)=\omega_Q(p)/p$, substitution into \eqref{eq:BTapplication} proves \eqref{eq:unified}.
\end{proof}

\begin{remark}
For a global box, a closely related upper bound follows from \cite[Theorem 1.15]{CKPS2025}.
\end{remark}

\section{A unified joint-prime framework}\label{sec:CST}

The prime-level factor in the polynomial-value sieve is the product of two pieces of information: the top-weight permutation character $\tau_Q$ attached to the polynomial and the prime-level class function attached to the arithmetic weight. The latter may be automorphic, purely finite Galois, or a product of the two. We estimate the resulting prime sums using Chebotarev--Sato--Tate equidistribution.

\subsection{The Chebotarev--Sato--Tate group}

Let $M/\Q$ be a finite Galois extension containing $M_Q$, with
\[
G=\Gal(M/\Q).
\]
Inflate $\tau_Q$ from $G_Q$ to $G$. Let $S$ be the compact Sato--Tate group attached to the automorphic or motivic datum under consideration; $S$ is allowed to be trivial. At every prime outside a fixed finite set, denote the joint conjugacy class by
\[
j_p=(\Frob_p(M/\Q),s_p)\in G\times S.
\]
When no confusion can arise, we simply write $\Frob_p$ for the Frobenius class in $G$. The Chebotarev--Sato--Tate group considered here is a closed subgroup
\[
G_S\subseteq G\times S.
\]
We take $G_S$ with its natural surjective projections onto both $G$ and $S$. In many applications $G_S$ is therefore forced to be the direct product; the CM case, where the Sato--Tate group is disconnected, is an important exception.

Let $\mu_{G_S}$ denote normalized Haar measure on $G_S$.

\begin{hypothesis}[Chebotarev--Sato--Tate]\label{hyp:CST}
The conjugacy classes $j_p$ are equidistributed in the space of conjugacy classes of $G_S$ with respect to $\mu_{G_S}$. Equivalently, for every continuous class function $F:G_S\to\C$,
\begin{equation}\label{eq:CSTqual}
\frac1{\pi(x)}\sum_{p\le x}F(j_p)
\longrightarrow
\int_{G_S}F\,\dd\mu_{G_S}.
\end{equation}
\end{hypothesis}

\begin{remark}
Serre's equidistribution criterion \cite{Serre1981} reduces Hypothesis~\ref{hyp:CST} to analytic properties at $s=1$ of the Euler products attached to nontrivial irreducible characters of $G_S$.
\end{remark}

For a bounded nonnegative continuous class function $\Phi$ on $G_S$, define
\begin{equation}\label{eq:kappa}
\kappa_Q(\Phi)
:=\int_{G_S}\tau_Q(g)\Phi(g,s)\,\dd\mu_{G_S}(g,s).
\end{equation}

\begin{proposition}\label{prop:qualitativeMertens}
Under Hypothesis~\ref{hyp:CST},
\begin{equation}\label{eq:qualMertens}
\sum_{p\le x}\frac{\tau_Q(\Frob_p)\Phi(j_p)}p
=\kappa_Q(\Phi)\log\log x+o(\log\log x).
\end{equation}
\end{proposition}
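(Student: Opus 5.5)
The plan is to apply Hypothesis~\ref{hyp:CST} to the single class function
\[
F_Q(g,s)=\tau_Q(g)\Phi(g,s)
\]
on $G_S$, and then pass from the natural density statement \eqref{eq:CSTqual} to a logarithmic (reciprocal-prime) statement by partial summation. First I check that $F_Q$ is an admissible test function. Since $\tau_Q$ is inflated from $G_Q$ to $G$, it is a class function on the finite group $G$. Its pullback along the projection $G_S\to G$ is therefore continuous, locally constant, and invariant under conjugation in $G_S$. The product with the bounded continuous class function $\Phi$ is thus a bounded continuous class function on $G_S$, with $\abs{F_Q}\le C$ where $C=\deg\tau_Q\cdot\sup\abs{\Phi}$. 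At every good prime we have $F_Q(j_p)=\tau_Q(\Frob_p)\Phi(j_p)$. The finitely many bad primes change the left side of \eqref{eq:qualMertens} by only $O(1)$.

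Next, put $A(t)=\sum_{p\le t}F_Q(j_p)$. By \eqref{eq:CSTqual} applied to $F_Q$ we get
\[
A(t)=\kappa_Q(\Phi)\pi(t)+\epsilon(t)\pi(t),\qquad \epsilon(t)\to0 .
\]
The trivial bound gives $\abs{A(t)}\le C\pi(t)$. Partial summation then yields
\[
\sum_{p\le x}\frac{F_Q(j_p)}p=\frac{A(x)}x+\int_2^x\frac{A(t)}{t^2}\dd t .
\]
The boundary term is $O(1)$ by Chebyshev's bound. In the integral, the main part $\kappa_Q(\Phi)\int_2^x\pi(t)t^{-2}\dd t$ equals $\kappa_Q(\Phi)\log\log x+O(1)$ by Mertens' theorem, or equivalently the prime number theorem with partial summation.

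The one step that needs care is the error integral $\int_2^x\epsilon(t)\pi(t)t^{-2}\dd t$. Here I would fix $\eta>0$ and choose $T_\eta$ so that $\abs{\epsilon(t)}\le\eta$ for $t\ge T_\eta$. On $[2,T_\eta]$, the bound $\abs{\epsilon(t)}\le C+\abs{\kappa_Q(\Phi)}$ shows that the integral is $O_\eta(1)$. On $[T_\eta,x]$ it is at most $\eta\int_2^x\pi(t)t^{-2}\dd t\ll\eta\log\log x$. Letting $x\to\infty$ and then $\eta\to0$ shows that the error integral is $o(\log\log x)$, which gives \eqref{eq:qualMertens}. I do not expect a serious obstacle: this is the standard Ces\`aro-to-logarithmic transfer. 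The only point to verify is that the hypothesis is really applied on $G_S$ and not on $G\times S$, since in the CM case $G_S$ is a proper fibre product. Because $\kappa_Q(\Phi)$ is defined as the integral over $G_S$ in \eqref{eq:kappa}, this matches exactly.
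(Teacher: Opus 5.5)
Your proposal is correct and takes the same route as the paper: apply Hypothesis~\ref{hyp:CST} to the class function $F_Q=\tau_Q\Phi$ on $G_S$, then pass to the reciprocal-prime sum by partial summation and the prime number theorem. The only difference is that you write out the $\eta$-argument controlling the $o(\pi(t))$ error inside the integral, which the paper leaves implicit.
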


\begin{proof}
Apply \eqref{eq:CSTqual} to the continuous class function $(g,s)\mapsto\tau_Q(g)\Phi(g,s)$. This gives
\[
\sum_{p\le x}\tau_Q(\Frob_p)\Phi(j_p)
=\kappa_Q(\Phi)\pi(x)+o(\pi(x)).
\]
Partial summation and the prime number theorem give \eqref{eq:qualMertens}.
\end{proof}

\subsection{A Mertens-effective joint distribution}
To obtain the precise logarithmic exponent, we also need an effective version of the Chebotarev--Sato--Tate distribution. For our purposes, it suffices to assume the following Mertens-effective form of Chebotarev--Sato--Tate equidistribution.

\begin{hypothesis}\label{hyp:effectiveCST}
Let $F:G_S\to\C$ be a continuous class function and put
\[
\mu(F)=\int_{G_S}F\,\dd\mu_{G_S}.
\]
We say that $F$ satisfies the effective Chebotarev--Sato--Tate hypothesis if
\begin{equation}\label{eq:effectiveCST}
\sum_{p\le x}F(j_p)=\mu(F)\pi(x)+E_F(x)
\end{equation}
with
\begin{equation}\label{eq:integrableError}
\int_3^\infty\frac{\abs{E_F(t)}}{t^2}\,\dd t<\infty.
\end{equation}
\end{hypothesis}

\begin{proposition}\label{prop:effectiveMertens}
If the test function $F_Q(g,s)=\tau_Q(g)\Phi(g,s)$ satisfies Hypothesis~\ref{hyp:effectiveCST}, then
\begin{equation}\label{eq:effectiveMertens}
\sum_{p\le x}\frac{\tau_Q(\Frob_p)\Phi(j_p)}p
=\kappa_Q(\Phi)\log\log x+O(1).
\end{equation}
\end{proposition}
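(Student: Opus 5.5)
The plan is a Stieltjes integration by parts against the counting function $\sum_{p\le t}F_Q(j_p)$, with Hypothesis~\ref{hyp:effectiveCST} supplying the main term and the integrability condition \eqref{eq:integrableError} absorbing the error. Put $A(t)=\sum_{p\le t}F_Q(j_p)$. Primes in the fixed exceptional set, where $j_p$ is undefined, contribute $O(1)$ to the left side of \eqref{eq:effectiveMertens} because $\tau_Q$ and $\Phi$ are bounded. We may therefore sum over good primes only. Hypothesis~\ref{hyp:effectiveCST} gives
\[
A(t)=\kappa_Q(\Phi)\pi(t)+E_{F_Q}(t),
\]
since $\mu(F_Q)=\kappa_Q(\Phi)$ by \eqref{eq:kappa}.

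Partial summation gives
\[
\sum_{p\le x}\frac{F_Q(j_p)}{p}=\frac{A(x)}{x}+\int_2^x\frac{A(t)}{t^2}\dd t .
\]
Substituting the decomposition of $A$ splits this into a main part and an error part.

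The main part is
\[
\kappa_Q(\Phi)\Bigl(\frac{\pi(x)}{x}+\int_2^x\frac{\pi(t)}{t^2}\dd t\Bigr)=\kappa_Q(\Phi)\sum_{p\le x}\frac1p .
\]
By Mertens' theorem this equals $\kappa_Q(\Phi)\log\log x+O(1)$.

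The error part is
\[
\frac{E_{F_Q}(x)}{x}+\int_2^x\frac{E_{F_Q}(t)}{t^2}\dd t .
\]
The integral is $O(1)$ by \eqref{eq:integrableError}; the short range $[2,3]$ is trivially bounded.

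The main obstacle is the boundary term $E_{F_Q}(x)/x$, because \eqref{eq:integrableError} controls only an average of $E$, not its pointwise size. The resolution is a crude bound. Since $F_Q$ is bounded and $\kappa_Q(\Phi)$ is finite,
\[
|E_{F_Q}(x)|\le |A(x)|+\kappa_Q(\Phi)\pi(x)\ll\pi(x),
\]
so $E_{F_Q}(x)/x\ll 1/\log x=O(1)$. Combining the main and error parts proves \eqref{eq:effectiveMertens}.
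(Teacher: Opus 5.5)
Your proof is correct and takes the same route as the paper: Abel summation against $\sum_{p\le t}F_Q(j_p)$. The main term $\kappa_Q(\Phi)\pi(t)$ yields $\kappa_Q(\Phi)\sum_{p\le x}1/p=\kappa_Q(\Phi)\log\log x+O(1)$, and the error integral is controlled by \eqref{eq:integrableError}; your treatment of the boundary term $E_{F_Q}(x)/x$ via the trivial bound $E_{F_Q}(x)\ll\pi(x)$ supplies a detail that the paper's two-line proof leaves implicit.
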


\begin{proof}
Abel summation shows that the main term in \eqref{eq:effectiveCST} contributes $\kappa_Q(\Phi)\log\log x$ plus a constant. The contribution of $E_{F_Q}$ converges by \eqref{eq:integrableError}.
\end{proof}

If $G_S=G\times S$ and $\Phi(g,s)=W(g)H(s)$ factors, then
\begin{equation}\label{eq:kappafactor}
\kappa_Q(\Phi)=\langle\tau_Q,W\rangle_G\int_SH\,\dd\mu_S.
\end{equation}
In particular, if $W=1$, then
\begin{equation}\label{eq:kappaSTfactor}
\kappa_Q(H)=r(Q)\int_SH\,\dd\mu_S.
\end{equation}

Combining \cref{thm:unified} and the Chebotarev--Sato--Tate hypothesis, we obtain the following theorem.
\begin{theorem}\label{cor:CSTsieve}
Let $h$ be admissible and suppose that, at every good prime,
\[
h(p)=\Phi(j_p)
\]
for a bounded nonnegative continuous class function $\Phi$ on a Chebotarev--Sato--Tate group $G_S$. If the function
\[
F_Q(g,s):=\tau_Q(g)\Phi(g,s)
\]
satisfies Hypothesis~\ref{hyp:effectiveCST}, then
\begin{equation}\label{eq:CSTsieve}
\sum_{\substack{\mathbf n\in\mathcal B\\Q(\mathbf n)\ne0}}
h(\abs{Q(\mathbf n)})
\ll_{Q,h,A}V(\log X^*)^{\kappa_Q(\Phi)-r(Q)}.
\end{equation}
If only the qualitative Hypothesis~\ref{hyp:CST} is known, then for every $\varepsilon>0$ the exponent in \eqref{eq:CSTsieve} may be replaced by $\kappa_Q(\Phi)-r(Q)+\varepsilon$.
\end{theorem}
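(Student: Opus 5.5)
The bound follows by feeding the prime-sum asymptotics of \cref{prop:effectiveMertens} (or \cref{prop:qualitativeMertens}) into \cref{thm:unified}. The only real work is to replace $\omega_Q(p)$ by $\tau_Q(\Frob_p)$ at acceptable cost.

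By \cref{thm:unified} it suffices to show
\[
\sum_{p\le X^*}\frac{\omega_Q(p)(h(p)-1)}p\le(\kappa_Q(\Phi)-r(Q))\log\log X^*+O(1)
\]
in the effective case, with $+\varepsilon\log\log X^*+O_\varepsilon(1)$ in the qualitative case. Then $V\exp\{\cdots\}\ll V(\log X^*)^{\kappa_Q(\Phi)-r(Q)(+\varepsilon)}$, as required.

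\textbf{Step 1: splitting the sum.} Write the sum as $\sum_{p}\omega_Q(p)h(p)/p-\sum_p\omega_Q(p)/p$. By \eqref{eq:omegaMertens}, the second sum equals $r(Q)\log\log X^*+O_Q(1)$.

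\textbf{Step 2: replacing $\omega_Q(p)$ in the first sum.} By \cref{lem:topweight}, $\omega_Q(p)=\tau_Q(\Frob_p)+O_Q(p^{-1/2})$ at all sufficiently large primes. Since $h$ is admissible, $h(p)$ is bounded, so
\[
\sum_{p\le x}\frac{\omega_Q(p)h(p)}p=\sum_{p\le x}\frac{\tau_Q(\Frob_p)h(p)}p+O(1),
\]
because $\sum_p p^{-3/2}<\infty$. The finitely many bad primes, where $h(p)=\Phi(j_p)$ may fail or the Lang--Weil error is not yet valid, contribute $O_{Q,h}(1)$. Here we use that $\omega_Q(p)\le\deg Q$ trivially, which follows since $N_Q(p)\le \deg Q\cdot p^{m-1}$ after a linear change of variables when $Q$ is nonzero mod $p$. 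At good primes we then substitute $h(p)=\Phi(j_p)$, after inflating $\tau_Q$ to $G$ so that $\tau_Q(\Frob_p)\Phi(j_p)=F_Q(j_p)$.

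\textbf{Step 3: the Chebotarev--Sato--Tate input.} \cref{prop:effectiveMertens} gives $\sum_{p\le x}F_Q(j_p)/p=\kappa_Q(\Phi)\log\log x+O(1)$. Combining this with Steps 1 and 2 yields the exponent $\kappa_Q(\Phi)-r(Q)$.

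\textbf{Step 4: the qualitative case.} Under Hypothesis~\ref{hyp:CST}, \cref{prop:qualitativeMertens} gives $\kappa_Q(\Phi)\log\log x+o(\log\log x)$. For each fixed $\varepsilon>0$, the $o$-term is at most $\varepsilon\log\log x+C_\varepsilon$ for all $x\ge3$; for small $x$ this holds by enlarging the constant. This gives the $+\varepsilon$ version.

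The main obstacle is mild and lies in Step 2. We must check that the Frobenius class of $M_Q$ obtained as the image of $\Frob_p(M/\Q)$ is the one appearing in \cref{lem:topweight}. This holds because $M\supseteq M_Q$ and $\tau_Q$ is inflated. We also need the bad primes to be finite and uniformly bounded in contribution. Continuity of $F_Q$ on $G_S$ is automatic: $\tau_Q$ factors through the finite group $G$, and $\Phi$ is continuous.
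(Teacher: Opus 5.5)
Your proposal is correct and follows essentially the same route as the paper: you feed \cref{lem:topweight} (to replace $\omega_Q(p)$ by $\tau_Q(\Frob_p)$ at total cost $O(\sum_p p^{-3/2})$), boundedness of $h(p)$, and \cref{prop:effectiveMertens} or \cref{prop:qualitativeMertens} into \cref{thm:unified}, with \eqref{eq:omegaMertens} handling the $-1$ term. Where you differ is only in spelling out the bad-prime, inflation and continuity bookkeeping that the paper's two-line proof leaves implicit.
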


\begin{proof}
By \cref{lem:topweight}, boundedness of $\Phi$, and \cref{prop:effectiveMertens}, we have
\[
\sum_{p\le x}\frac{\omega_Q(p)h(p)}p
=\kappa_Q(\Phi)\log\log x+O(1).
\]
Together with \eqref{eq:omegaMertens}, \cref{thm:unified} gives \eqref{eq:CSTsieve}. In the qualitative case, \cref{prop:qualitativeMertens} gives an $o(\log\log X^*)$ error, which is absorbed into $(\log X^*)^\varepsilon$.
\end{proof}

\subsection{Cases in which the joint distribution is known}\label{subsec:knownCST}

We briefly record the cases relevant to the applications below in which the required joint distribution is known, and indicate when an effective form is available.
\subsubsection{Purely Galois case}

If $S$ is trivial, the framework reduces to ordinary Chebotarev. For a fixed Galois extension $M/\Q$ and any class function $\Phi$ on $G=\Gal(M/\Q)$, the effective Chebotarev density theorem \cite{LagariasOdlyzko1977,Serre1981} gives the Mertens formula
\begin{equation}\label{eq:pureGaloisMertens}
\sum_{p\le x}\frac{\tau_Q(\Frob_p)\Phi(\Frob_p)}p
=\langle\tau_Q,\Phi\rangle_G\log\log x+O(1).
\end{equation}
All purely finite Galois applications in this paper are therefore unconditional.

\subsubsection{Non-CM case}

Let $\pi$ be a cuspidal automorphic representation of $\mathrm{GL}_2(\A_{\Q})$ corresponding to a primitive non-CM holomorphic newform; for simplicity we take the central character to be trivial. The Sato--Tate group is $\mathrm{SU}(2)$. Writing
\[
\lambda_\pi(p)=2\cos\theta_p,
\qquad 0\le\theta_p\le\pi,
\]
the Sato--Tate measure is
\begin{equation}\label{eq:STmeasure}
\dd\mu_{\ST}(\theta)=\frac2\pi\sin^2\theta\,\dd\theta.
\end{equation}
For two twist-inequivalent non-CM newforms $\pi_1,\pi_2$ with trivial central characters, the relevant joint Sato--Tate group is $\mathrm{SU}(2)\times\mathrm{SU}(2)$ with product measure
\[
\dd\mu_{\ST}^{(2)}(\theta_1,\theta_2)
=\frac4{\pi^2}\sin^2\theta_1\sin^2\theta_2\,\dd\theta_1\dd\theta_2.
\]

The automorphy of all symmetric powers of non-CM holomorphic newforms over $\Q$ is due to Newton and Thorne \cite{NewtonThorneI,NewtonThorneII}; the corresponding Hilbert modular symmetric-power functoriality is now also known \cite{NewtonThorneHilbert}. These results provide basic tools to prove the Chebotarev--Sato--Tate theorems.

Murty and Murty \cite{MurtyMurty2009} first established a Chebotarev--Sato--Tate theorem when $M/\Q$ is solvable. Later, Wong \cite{Wong2019} developed a systematic treatment for non-CM Hilbert modular forms and proved that, if $G=\Gal(M/\Q)$ factors as $G=G_1\times G_2$ with $M^{G_1}$ totally real and $G_1$ abelian, then the Chebotarev--Sato--Tate theorem holds for $G\times \SU(2)$ and $G\times \SU(2)\times\SU(2)$.

For the effective setting, Thorner's results \cite{Thorner2021,Thorner2025} provide the estimates required by Hypothesis~\ref{hyp:effectiveCST} for the test functions occurring below when $M/\Q$ is abelian or totally real. Moreover, Wong \cite{Wong2019} proved an effective Chebotarev--Sato--Tate theorem under GRH.



\subsubsection{The CM case}

Let $\pi$ be a primitive CM holomorphic newform over $\Q$ with trivial central character and CM by an imaginary quadratic field $E$. Equivalently, $\pi$ is automorphically induced from an algebraic Hecke character of $E$. Let
\[
\chi_E:\GQ\longrightarrow\{\pm1\}
\]
be the quadratic character of $E/\Q$, and put
\[
T=\left\{
\begin{pmatrix}e^{i\theta}&0\\0&e^{-i\theta}\end{pmatrix}:\theta\in\mathbf R
\right\}\simeq \mathrm U(1).
\]
The Sato--Tate group is the normalizer
\[
S_{\CM}=N_{\mathrm{SU}(2)}(T)=T\sqcup wT,
\qquad
w=\begin{pmatrix}0&1\\-1&0\end{pmatrix},
\]
with component character
\[
c:S_{\CM}\longrightarrow\{\pm1\},
\qquad c(T)=1,\quad c(wT)=-1.
\]
Enlarge $M$ so that $E\subset M$, and continue to write $G=\Gal(M/\Q)$. The joint group is the fiber product
\begin{equation}\label{eq:CMfiber}
G_S^{\CM}
=\{(\sigma,s)\in G\times S_{\CM}:\chi_E(\sigma)=c(s)\}.
\end{equation}

In this case, the reciprocal-prime estimate needed here is also unconditional.
\begin{lemma}\label{lem:CMMertens}
For each fixed $d\ge1$,
\begin{equation}\label{eq:CMMertensFormula}
\sum_{p\le x}\frac{\tau_Q(\Frob_p)\abs{\lambda_{\Sym^d\pi}(p)}}p
=\kappa_Q(H_d)\log\log x+O_{Q,\pi,d}(1),
\end{equation}
where $H_d(s)=\abs{\tr(\Sym^d s)}$ and $\kappa_Q(H_d)$ is the Haar average on the fiber product $G_S^{\CM}$.
\end{lemma}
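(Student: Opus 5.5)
The plan is to split the primes according to the component character, i.e.\ according to $\chi_E(p)$, and to handle the two classes by different means. For $p$ unramified and inert in $E$, the class $s_p$ lies in $wT$. Every element of $wT$ has eigenvalues $\pm i$, so $\tr\Sym^d(s_p)=U_d(0)$ and $H_d(s_p)=B_d$ is constant. For these primes the sum is therefore
\[
B_d\sum_{p\le x}\frac{\tau_Q(\Frob_p)\1_{\chi_E(\Frob_p)=-1}}{p}.
\]
This is a purely Galois sum on $G=\Gal(M/\Q)$, so \eqref{eq:pureGaloisMertens} gives it as
\[
B_d\cdot\frac1{|G|}\sum_{\chi_E(\sigma)=-1}\tau_Q(\sigma)\,\log\log x+O(1)=B_d\,\frac{r(Q)-\eta_Q(E)}2\log\log x+O(1).
\]
Here I use $\frac1{|G|}\sum_\sigma\tau_Q(\sigma)\tfrac{1\mp\chi_E(\sigma)}2=\tfrac{r(Q)\mp\eta_Q(E)}2$, from \cref{lem:topweight} and the definition of $\eta_Q(E)$. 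Ramified primes and prime powers contribute $O(1)$.

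For $p$ split in $E$, say $p=\mathfrak p\bar{\mathfrak p}$, write $\lambda_\pi(p)=\psi(\mathfrak p)+\psi(\bar{\mathfrak p})$ with $\psi$ the unitarized Hecke character of $E$, so that $\psi(\mathfrak p)=e^{i\theta_{\mathfrak p}}$. The Frobenius class of $p$ lies in $\Gal(M/E)$ and is represented by $\Frob_{\mathfrak p}$. I will rewrite the split-prime sum as one half of a sum over degree-one primes $\mathfrak p$ of $E$:
\[
\tfrac12\sum_{N\mathfrak p\le x}\frac{\tau_Q(\Frob_{\mathfrak p})\,\abs{U_d(\cos\theta_{\mathfrak p})}}{N\mathfrak p}.
\]
Higher-degree primes contribute $O(1)$. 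The function $\theta\mapsto\abs{U_d(\cos\theta)}=\abs{\sin((d+1)\theta)/\sin\theta}$ is continuous and piecewise real-analytic on $\mathbf R/2\pi\Z$, with finitely many corners. Its Fourier coefficients $c_k$ therefore satisfy $c_k\ll_d k^{-2}$, and $c_0=A_d$. I then expand $\tau_Q|_{\Gal(M/E)}$ into irreducible characters $\rho$, and apply Brauer induction to each $\rho$. This reduces every term $\rho(\Frob_{\mathfrak p})\psi^k(\mathfrak p)$ to a $\Z$-combination of values at primes of intermediate fields $E\subset K\subset M$ of Hecke characters $\chi\cdot(\psi^k\circ N_{K/E})$, where $\chi$ has finite order.

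For $k\ne0$ these Hecke characters have nontrivial infinity type, so they are nontrivial. Their $L$-functions are entire, and the de la Vallée Poussin zero-free region applies to them. The classical prime ideal theorem argument then gives
\[
\sum_{N\mathfrak P\le x}\frac{\chi\psi^k(N_{K/E}\mathfrak P)}{N\mathfrak P}\ll\log(|k|+2),
\]
uniformly in $k$, because the analytic conductor grows polynomially in $k$. For $k=0$ the term is again pure Chebotarev on $\Gal(M/E)$. It gives
\[
\frac1{[M:E]}\sum_{\sigma\in\Gal(M/E)}\tau_Q(\sigma)\log\log x+O(1)=\bigl(r(Q)+\eta_Q(E)\bigr)\log\log x+O(1).
\]
Multiplying by $\tfrac12 c_0=\tfrac12A_d$ gives the first term of the formula for $\kappa_Q(H_d)$ in \eqref{eq:intro-CMconstant}. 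The total error is $\ll\sum_k\abs{c_k}\log(\abs k+2)<\infty$.

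Finally, I will check that the resulting constant $\tfrac{r+\eta}2A_d+\tfrac{r-\eta}2B_d$ is the Haar integral of $\tau_QH_d$ over the fiber product \eqref{eq:CMfiber}. This is immediate: Haar measure on $G_S^{\CM}$ places mass $\tfrac12$ on each coset, with Haar measure on $T$ or $wT$ and uniform measure on the matching fiber of $\chi_E$ in $G$. The main obstacle is getting $O(1)$ rather than $o(\log\log x)$. Approximating $H_d$ by a fixed trigonometric polynomial only yields an $\varepsilon\log\log x$ error. My plan avoids this by using the full Fourier series together with the $k^{-2}$ decay and a Mertens bound for Hecke characters that is uniform in $k$ with only logarithmic loss. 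Verifying that uniformity carefully, including in the Brauer-induced fields, is the step requiring the most care.
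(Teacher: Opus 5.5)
Your proposal is correct and follows the paper's proof essentially step for step. On the inert component $H_d\equiv B_d$, so that part is a pure Chebotarev sum; on the split component you use the absolutely convergent Fourier expansion of $\abs{U_d(\cos\theta)}$ with $k^{-2}$ decay, reduce each nonzero mode by Brauer induction to Hecke characters $\chi\cdot(\psi^k\circ N_{K/E})$ with prime sums $O(\log(\abs{k}+2))$, and obtain $\tfrac{r(Q)+\eta_Q(E)}{2}A_d\log\log x$ from the zero mode. Your version is more explicit than the paper's sketch, notably in passing to degree-one primes of $E$, computing the constants, and invoking a zero-free region rather than only nonvanishing at $s=1$.
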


\begin{proof}
We sketch the standard Chebotarev--Hecke reduction, since only the reciprocal-prime form \eqref{eq:CMMertensFormula} is needed below. On the disconnected component $wT$, the function $H_d$ is constant, so its contribution is a purely finite Chebotarev prime sum and has the required form by \eqref{eq:pureGaloisMertens}.

On $T$, write $s=\operatorname{diag}(e^{i\theta},e^{-i\theta})$. The continuous piecewise smooth function
\[
\theta\longmapsto H_d(s)=\abs{U_d(\cos\theta)}
\]
has an absolutely convergent Fourier expansion
\[
H_d(s)=\sum_{n\in\mathbf Z}c_{d,n}e^{in\theta},
\qquad
c_{d,n}\ll_d(1+\abs n)^{-2}.
\]
After restricting the finite character $\tau_Q$ to the CM field and applying Brauer induction, each nonzero Fourier mode reduces to a finite integral combination of prime sums attached to Hecke characters over fixed intermediate number fields: the infinite-order part is a nonzero power of the unitary Hecke character inducing $\pi$, and the remaining factor has finite order. The corresponding Hecke $L$-functions are holomorphic and nonzero at $s=1$. Their Euler products therefore give bounded reciprocal-prime sums for every nonzero mode; standard bounds in terms of the archimedean conductor give $O_{Q,\pi,d}(\log(2+\abs n))$. Since
\[
\sum_{n\in\mathbf Z}\abs{c_{d,n}}\log(2+\abs n)<\infty,
\]
the sum of all nonzero modes is $O_{Q,\pi,d}(1)$. The zero Fourier mode contributes exactly the Haar average on the split component times $\log\log x$. Combining the split and inert components yields \eqref{eq:CMMertensFormula}. The underlying Hecke and Brauer--Chebotarev ingredients are classical; see \cite{Hecke1920,Serre1981}.
\end{proof}

\section{Applications to arithmetic functions}\label{sec:applications}

\subsection{Non-CM symmetric powers and Rankin--Selberg-type products}\label{subsec:nonCMapps}

Let $\pi$ be a cuspidal automorphic representation of $\mathrm{GL}_2(\A_{\Q})$ corresponding to a primitive non-CM holomorphic newform over $\Q$, with trivial central character for simplicity. For $d\ge1$, let $\Sym^d\pi$ be the $d$-th symmetric power representation. At every unramified prime $p$, write
\[
\lambda_\pi(p)=2\cos\theta_p,
\qquad 0\le\theta_p\le\pi.
\]
The irreducible characters of $\mathrm{SU}(2)$ are
\[
\chi_d(\theta)=U_d(\cos\theta)
=\frac{\sin((d+1)\theta)}{\sin\theta}.
\]
For the absolute symmetric-power prime weight
\[
H_d(\theta)=\abs{U_d(\cos\theta)},
\]
put
\begin{equation}\label{eq:kappad}
\kappa_d^{\nCM}
:=\frac2\pi\int_0^\pi\abs{U_d(\cos\theta)}\sin^2\theta\,\dd\theta.
\end{equation}
Equation \eqref{eq:kappaSTfactor} gives
\begin{equation}\label{eq:kappaQHd}
\kappa_Q(H_d)=r(Q)\kappa_d^{\nCM}.
\end{equation}

For positive integers $a,b$, define similarly
\begin{align}\label{eq:kappaab}
\kappa_{a,b}^{\Delta}
&:=\frac2\pi\int_0^\pi
\abs{U_a(\cos\theta)U_b(\cos\theta)}\sin^2\theta\,\dd\theta\notag\\
&=\frac2\pi\int_0^\pi
\abs{\sin((a+1)\theta)\sin((b+1)\theta)}\,\dd\theta.
\end{align}
For $H_{a,b}(\theta)=\abs{U_a(\cos\theta)U_b(\cos\theta)}$,
\begin{equation}\label{eq:kappaQHab}
\kappa_Q(H_{a,b})=r(Q)\kappa_{a,b}^{\Delta}.
\end{equation}
At every unramified prime,
\[
\lambda_{\Sym^d\pi}(p)=U_d(\cos\theta_p).
\]

\begin{theorem}\label{thm:nonCM}
Assume that the test functions associated with $(Q,\pi)$ satisfy Hypothesis~\ref{hyp:effectiveCST}. Then
\begin{equation}\label{eq:symdpoly}
\sum_{\substack{\mathbf n\in\mathcal B\\Q(\mathbf n)\ne0}}
\abs{\lambda_{\Sym^d\pi}(\abs{Q(\mathbf n)})}
\ll V(\log X^*)^{-r(Q)(1-\kappa_d^{\nCM})},
\end{equation}
and
\begin{equation}\label{eq:symaSymbpoly}
\sum_{\substack{\mathbf n\in\mathcal B\\Q(\mathbf n)\ne0}}
\abs{\lambda_{\Sym^a\pi}(\abs{Q(\mathbf n)})
\lambda_{\Sym^b\pi}(\abs{Q(\mathbf n)})}
\ll V(\log X^*)^{-r(Q)(1-\kappa_{a,b}^{\Delta})}.
\end{equation}
With only the qualitative joint distribution, the exponents on the right-hand sides may be increased by an arbitrary $\varepsilon>0$.
\end{theorem}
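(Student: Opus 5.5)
The plan is to deduce \cref{thm:nonCM} directly from \cref{cor:CSTsieve}, taking $S=\SU(2)$ and $G_S=G\times\SU(2)$. The choice of joint group should be justified briefly. Both projections are surjective, and since $\SU(2)$ is connected and $G$ is finite, the fibre product over any common finite quotient is trivial: $\SU(2)$ has no nontrivial finite quotients. Hence $G_S$ is the full direct product.

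For \eqref{eq:symdpoly}, take $h(n)=\abs{\lambda_{\Sym^d\pi}(n)}$. Then $h$ is nonnegative and multiplicative, since $\lambda_{\Sym^d\pi}$ is multiplicative. It is admissible because Deligne's bound gives $\abs{\lambda_{\Sym^d\pi}(p^\nu)}\le\binom{\nu+d}{d}\ll_d(\nu+1)^d$ at unramified primes. At the finitely many ramified primes one uses instead the standard bound for the local coefficients of the (now known, by Newton--Thorne) automorphic $\Sym^d\pi$. Since ramified Satake parameters have absolute value at most $1$, these coefficients are again polynomially bounded in $\nu$, so $B_0=d$ works.

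At good primes, $h(p)=\Phi(j_p)$ with $\Phi(g,s)=H_d(s)=\abs{\tr\Sym^d s}$. This is a bounded, nonnegative, continuous class function on $G\times\SU(2)$. By assumption, $F_Q=\tau_Q\Phi$ satisfies \cref{hyp:effectiveCST}, so \cref{cor:CSTsieve} gives the exponent $\kappa_Q(H_d)-r(Q)$. By \eqref{eq:kappaQHd}, which is \eqref{eq:kappafactor} with $W=1$ together with $\langle\tau_Q,1\rangle_G=r(Q)$ from \cref{lem:topweight}, this exponent equals $-r(Q)(1-\kappa_d^{\nCM})$.

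For \eqref{eq:symaSymbpoly}, take $h(n)=\abs{\lambda_{\Sym^a\pi}(n)\lambda_{\Sym^b\pi}(n)}$. This is a product of two admissible multiplicative functions and is therefore admissible with $B_0=a+b$. At good primes, $h(p)=H_{a,b}(j_p)$, where $H_{a,b}$ depends only on the $\SU(2)$ coordinate. Using \eqref{eq:kappaQHab} together with the identity $U_a U_b\sin^2\theta=\sin((a+1)\theta)\sin((b+1)\theta)$, the same application of \cref{cor:CSTsieve} gives the exponent $-r(Q)(1-\kappa^\Delta_{a,b})$. The qualitative statements follow from the $\varepsilon$-version of \cref{cor:CSTsieve} under \cref{hyp:CST}.

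The main obstacle is not the reduction itself but the bookkeeping at bad primes. One has to confirm that $h(p^\nu)$ is polynomially bounded in $\nu$ at ramified primes, which requires the local bounds for $\Sym^d\pi$ mentioned above. One also has to confirm that the finitely many primes where $h(p)\ne\Phi(j_p)$ change the prime sum in \cref{cor:CSTsieve} by only $O(1)$; this holds because $h(p)$ is bounded there, and it is already absorbed by the remark after \cref{thm:unified}.
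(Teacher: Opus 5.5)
Your proposal is correct and follows the paper's own proof. In both, admissibility comes from the degree-$(d+1)$ local Euler factor with Satake parameters of modulus at most $1$, and then \cref{cor:CSTsieve} is applied, with the exponent identified through \eqref{eq:kappaQHd} and \eqref{eq:kappaQHab}. Your Goursat-type argument that $G_S=G\times\SU(2)$ and your handling of the ramified primes just make explicit points the paper leaves implicit.
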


\begin{proof}
The local Euler factor of $\Sym^d\pi$ has degree $d+1$ with unitary Satake parameters, so
\[
\abs{\lambda_{\Sym^d\pi}(p^\nu)}\ll_d(\nu+1)^d.
\]
Thus $n\mapsto\abs{\lambda_{\Sym^d\pi}(n)}$ is admissible. Applying \cref{cor:CSTsieve} together with \eqref{eq:kappaQHd} gives \eqref{eq:symdpoly}. The product in \eqref{eq:symaSymbpoly} is also admissible, and \eqref{eq:kappaQHab} gives the second estimate.
\end{proof}

Let $\pi_1$ and $\pi_2$ be two cuspidal automorphic representations of $\mathrm{GL}_2(\A_{\Q})$ corresponding to primitive non-CM holomorphic newforms with trivial central characters. We next consider another Rankin--Selberg-type sum.

\begin{theorem}\label{thm:twoforms}
Assume that $\pi_1$ and $\pi_2$ are not twist-equivalent and that the test function associated with $(Q,\pi_1,\pi_2)$ satisfies Hypothesis~\ref{hyp:effectiveCST}. Then
\begin{equation}\label{eq:twoforms}
\sum_{\substack{\mathbf n\in\mathcal B\\Q(\mathbf n)\ne0}}
\abs{\lambda_{\Sym^a\pi_1}(\abs{Q(\mathbf n)})
\lambda_{\Sym^b\pi_2}(\abs{Q(\mathbf n)})}
\ll V(\log X^*)^{-r(Q)(1-\kappa_a^{\nCM}\kappa_b^{\nCM})}.
\end{equation}
\end{theorem}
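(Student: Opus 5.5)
The plan is to reduce \cref{thm:twoforms} to \cref{cor:CSTsieve} with the joint Sato--Tate group $S=\SU(2)\times\SU(2)$, and then compute $\kappa_Q(\Phi)$ by the factorization \eqref{eq:kappafactor}.

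First, admissibility. At every prime $p$ unramified for both $\pi_1$ and $\pi_2$, the Satake parameters of $\Sym^a\pi_1$ and $\Sym^b\pi_2$ are unitary. As in the proof of \cref{thm:nonCM}, this gives
\[
\abs{\lambda_{\Sym^a\pi_1}(p^\nu)\lambda_{\Sym^b\pi_2}(p^\nu)}\ll_{a,b}(\nu+1)^{a+b}.
\]
At the finitely many ramified primes the local factors have degree at most $a+1$ and $b+1$, with parameters of absolute value at most $1$, so the same polynomial bound holds. Hence
\[
h(n)=\abs{\lambda_{\Sym^a\pi_1}(n)\lambda_{\Sym^b\pi_2}(n)}
\]
is a nonnegative multiplicative admissible function, with $h(p)\le(a+1)(b+1)$.

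Next, the prime values. At good primes we have $h(p)=\Phi(j_p)$, where $j_p=(\Frob_p,s_{1,p},s_{2,p})$ and
\[
\Phi(g,s_1,s_2)=\abs{\chi_a(s_1)}\,\abs{\chi_b(s_2)}.
\]
This $\Phi$ is a bounded nonnegative continuous class function on $G\times\SU(2)\times\SU(2)$ that depends only on the Sato--Tate coordinates.

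The key structural input is that the joint Chebotarev--Sato--Tate group is the full direct product $G_S=G\times\SU(2)\times\SU(2)$. This is where twist-inequivalence enters: for non-CM, twist-inequivalent $\pi_1,\pi_2$, the pair $(s_{1,p},s_{2,p})$ is equidistributed in $\SU(2)\times\SU(2)$. This is the setting of Hypothesis~\ref{hyp:effectiveCST} as formulated in \cref{subsec:knownCST}, and the surjective-projection convention for $G_S$ then forces the product structure. I expect this identification to be the main point needing care. The assumed hypothesis is only meaningful once $G_S$ is known, and one must check that no Galois entanglement can arise. Such entanglement would be a nontrivial one-dimensional character of $\SU(2)\times\SU(2)$ matching a character of $G$, as happens with $c$ in the CM case. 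But $\SU(2)\times\SU(2)$ is connected and perfect, so it has no nontrivial one-dimensional characters, and hence no fiber-product relation with the finite group $G$ is possible. I would argue via Goursat's lemma: a closed subgroup of $G\times S$ surjecting onto both factors is a fiber product over a common quotient of $G$ and $S$. Since $S$ has no nontrivial finite quotients, the common quotient is trivial.

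With $G_S$ a direct product, \eqref{eq:kappafactor} applies with $W=1$ and $H(s_1,s_2)=\abs{\chi_a(s_1)}\abs{\chi_b(s_2)}$. Since $\mu_S$ is the product measure $\dd\mu_{\ST}^{(2)}$, Fubini gives
\[
\kappa_Q(\Phi)=r(Q)\int_{\SU(2)}\abs{\chi_a}\dd\mu_{\ST}\int_{\SU(2)}\abs{\chi_b}\dd\mu_{\ST}=r(Q)\kappa_a^{\nCM}\kappa_b^{\nCM},
\]
using \eqref{eq:kappad}. Substituting this value into \eqref{eq:CSTsieve} gives the exponent
\[
\kappa_Q(\Phi)-r(Q)=-r(Q)\bigl(1-\kappa_a^{\nCM}\kappa_b^{\nCM}\bigr),
\]
which is \eqref{eq:twoforms}. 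The qualitative variant, with an extra $\varepsilon$ in the exponent, follows in the same way from the second part of \cref{cor:CSTsieve}.
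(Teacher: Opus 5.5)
Your proposal is correct and follows essentially the same route as the paper: identify the joint group as the direct product $G\times\SU(2)\times\SU(2)$ with product Haar measure, factor the mean to get $\kappa_Q(\Phi)=r(Q)\kappa_a^{\nCM}\kappa_b^{\nCM}$, and apply \cref{cor:CSTsieve}. Your Goursat-lemma argument for the product structure and your explicit admissibility check, including the ramified primes, spell out steps that the paper's one-line proof leaves implicit.
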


\begin{proof}
For twist-inequivalent forms, the joint Sato--Tate group is $\mathrm{SU}(2)\times\mathrm{SU}(2)$ with product measure. Thus the prime-level mean factors as $\kappa_a^{\nCM}\kappa_b^{\nCM}$, and the result follows from \cref{cor:CSTsieve}.
\end{proof}

\subsection{CM cases}\label{subsec:CMapps}

Assume now that $\pi$ is a primitive CM holomorphic newform with trivial central character and CM by the imaginary quadratic field $E$. Recall the fiber-product group $G_S^{\CM}$ in \eqref{eq:CMfiber}. For a bounded class function $H$ on $S_{\CM}$, define the conditional component means
\[
H_+=\int_T H(t)\,\dd\mu_T(t),
\qquad
H_-=\int_{wT}H(t)\,\dd\mu_{wT}(t),
\]
and put
\begin{equation}\label{eq:aHbH}
a_H=\frac{H_++H_-}{2},
\qquad
b_H=\frac{H_+-H_-}{2}.
\end{equation}

\begin{proposition}\label{prop:CMconstant}
Let $H$ be a bounded class function on $S_{\CM}$. Then
\[
\kappa_Q(H)=a_H\langle\tau_Q,1\rangle_G+b_H\langle\tau_Q,\chi_E\rangle_G.
\]
Let
\[
\eta_Q(E):=\langle\tau_Q,\chi_E\rangle_G.
\]
Then we have
\begin{equation}\label{eq:CMkappa}
\kappa_Q(H)=r(Q)a_H+\eta_Q(E)b_H.
\end{equation}
\end{proposition}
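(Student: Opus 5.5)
The plan is to compute the Haar integral on the fiber product $G_S^{\CM}$ by disintegrating it over the $G$-coordinate. The first step is to describe normalized Haar measure on
\[
G_S^{\CM}=\{(\sigma,s)\in G\times S_{\CM}:\chi_E(\sigma)=c(s)\}.
\]
Since $E\subset M$, $\chi_E$ is a surjective character of $G$, and $c$ is surjective onto $\{\pm1\}$. I will check that pushing Haar measure forward to $G$ gives uniform measure, using surjectivity of the projection $G_S^{\CM}\to G$. The fiber over a fixed $\sigma$ is $\{\sigma\}\times c^{-1}(\chi_E(\sigma))$, which is either the coset $T$ or the coset $wT$. Haar measure on $G_S^{\CM}$ then equals $\frac1{|G|}\sum_{\sigma}\delta_\sigma\otimes\mu_{\chi_E(\sigma)}$, where $\mu_{+}=\mu_T$ and $\mu_-=\mu_{wT}$ are the normalized Haar probability measures on the two cosets. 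To justify this, I will note that the measure so defined is a probability measure invariant under left translation by elements $(\sigma',s')$ of the group. Translating by $(\sigma',s')$ permutes the $\sigma$'s and carries the coset $c^{-1}(\chi_E(\sigma))$ onto $c^{-1}(\chi_E(\sigma'\sigma))$, mapping the normalized Haar measure on one coset to the normalized Haar measure on the other. Uniqueness of Haar measure then identifies it with $\mu_{G_S}$.

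With this disintegration, and using that $\tau_Q$ depends only on $\sigma$ while $H$ depends only on $s$,
\[
\kappa_Q(H)=\frac1{|G|}\sum_{\sigma\in G}\tau_Q(\sigma)\,H_{\chi_E(\sigma)},
\]
where $H_{+}=H_+$ and $H_-$ are the conditional component means. Next I write $H_{\chi_E(\sigma)}=a_H+b_H\chi_E(\sigma)$, which holds because $\chi_E(\sigma)=\pm1$ and by the definitions in \eqref{eq:aHbH}. Substituting gives
\[
\kappa_Q(H)=a_H\langle\tau_Q,1\rangle_G+b_H\langle\tau_Q,\chi_E\rangle_G,
\]
using that $\tau_Q$ and $\chi_E$ are real-valued, so no conjugation issue arises in the inner product.

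Finally, $\langle\tau_Q,1\rangle_G=\langle\tau_Q,1\rangle_{G_Q}=r(Q)$ by inflation together with \cref{lem:topweight}, which yields \eqref{eq:CMkappa}. The only step needing any care is the identification of Haar measure on the fiber product, in particular that each coset fiber carries its normalized Haar measure with equal total weight $1/|G|$ per $\sigma$. The invariance-plus-uniqueness argument above handles this, so I expect no serious obstacle.
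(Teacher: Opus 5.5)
Your proposal is correct and follows the same route as the paper. Both arguments disintegrate the Haar integral on $G_S^{\CM}$ over the $G$-coordinate, use the fiber-product condition to place $s$ in the coset indexed by $\chi_E(\sigma)$, write $H_{\chi_E(\sigma)}=a_H+b_H\chi_E(\sigma)$, and take the two inner products using $\langle\tau_Q,1\rangle_G=r(Q)$. Your invariance-plus-uniqueness identification of Haar measure on the fiber product spells out a step that the paper asserts in one line.
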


\begin{proof}
For each $\sigma\in G$, the fiber-product condition forces the Sato--Tate variable to lie in the component indexed by $\chi_E(\sigma)$. Hence
\[
\kappa_Q(H)=\frac1{\abs G}\sum_{\sigma\in G}\tau_Q(\sigma)H_{\chi_E(\sigma)}.
\]
By \eqref{eq:aHbH},
\[
H_{\chi_E(\sigma)}=a_H+b_H\chi_E(\sigma).
\]
Taking the two finite-group inner products and using $\langle\tau_Q,1\rangle_G=r(Q)$ gives \eqref{eq:CMkappa}.
\end{proof}

If $Q$ is irreducible over $\Q$, then $\tau_Q=\Ind_H^G\1$ for the stabilizer $H$ of one geometric component. Frobenius reciprocity gives
\begin{equation}\label{eq:etaCMirreducible}
\eta_Q(E)=
\begin{cases}
1,&\chi_E\vert_H=1,\\
0,&\chi_E\vert_H\ne1.
\end{cases}
\end{equation}
Thus $\eta_Q(E)=1$ precisely when the CM field is contained in the field of definition of that geometric component.

For
\[
H_d(s)=\abs{\tr(\Sym^d s)},
\]
set
\begin{equation}\label{eq:AdBd}
A_d=\frac1\pi\int_0^\pi\abs{U_d(\cos\theta)}\,\dd\theta,
\qquad
B_d=\abs{U_d(0)}=
\begin{cases}
0,&d\text{ odd},\\
1,&d\text{ even}.
\end{cases}
\end{equation}
Then $H_+=A_d$ and $H_-=B_d$, and \cref{prop:CMconstant} yields
\begin{equation}\label{eq:CMHdconstant}
\kappa_Q(H_d)
=\frac{r(Q)+\eta_Q(E)}2A_d
+\frac{r(Q)-\eta_Q(E)}2B_d.
\end{equation}

\begin{theorem}\label{thm:CM}
Let $\pi$ be as above. Then
\begin{equation}\label{eq:CMbound}
\sum_{\substack{\mathbf n\in\mathcal B\\Q(\mathbf n)\ne0}}
\abs{\lambda_{\Sym^d\pi}(\abs{Q(\mathbf n)})}
\ll V(\log X^*)^{\kappa_Q(H_d)-r(Q)},
\end{equation}
where $\kappa_Q(H_d)$ is given by \eqref{eq:CMHdconstant}. If $Q$ is irreducible, then $r(Q)=1$ and $\eta_Q(E)\in\{0,1\}$.
\end{theorem}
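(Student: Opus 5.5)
The plan is to run the proof of \cref{cor:CSTsieve} again, replacing \cref{prop:effectiveMertens} by the unconditional \cref{lem:CMMertens}. The first step is admissibility of $h(n)=\abs{\lambda_{\Sym^d\pi}(n)}$. At unramified primes the Satake parameters of $\Sym^d\pi$ are $d+1$ unitary numbers, so $\abs{\lambda_{\Sym^d\pi}(p^\nu)}\le\binom{\nu+d}{d}\ll_d(\nu+1)^d$. At the finitely many ramified primes the local factor has degree at most $d+1$ and its parameters have absolute value at most $1$, which gives the same bound. Moreover $h(p)=H_d(s_p)\le d+1$ is bounded. Hence $h$ is admissible and \cref{thm:unified} applies. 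It yields
\[
\sum_{\substack{\mathbf n\in\mathcal B\\Q(\mathbf n)\ne0}}h(\abs{Q(\mathbf n)})\ll V\exp\Bigl\{\sum_{p\le X^*}\frac{\omega_Q(p)(h(p)-1)}p\Bigr\}.
\]

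The second step evaluates the exponent. By \cref{lem:topweight} we have $\omega_Q(p)=\tau_Q(\Frob_p)+O(p^{-1/2})$. Since $h(p)$ is bounded, the difference $\sum_p(\omega_Q(p)-\tau_Q(\Frob_p))h(p)/p$ converges absolutely. It is therefore enough to treat $\sum_{p\le x}\tau_Q(\Frob_p)h(p)/p$. At primes unramified in $M$ and for $\pi$, we have $h(p)=H_d(s_p)$ with $j_p=(\Frob_p,s_p)\in G_S^{\CM}$. Then \cref{lem:CMMertens} gives this sum as $\kappa_Q(H_d)\log\log x+O(1)$, where $\kappa_Q(H_d)$ is the Haar average on the fiber product. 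Combined with \eqref{eq:omegaMertens}, the exponent equals $(\kappa_Q(H_d)-r(Q))\log\log X^*+O(1)$, which proves \eqref{eq:CMbound}.

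The third step identifies the constant. On $T$ we parametrise $s=\operatorname{diag}(e^{i\theta},e^{-i\theta})$, and normalized Haar measure on $T$ is $\dd\theta/2\pi$ on $[0,2\pi)$. Because $H_d$ is even in $\theta$, we get $H_+=\frac1\pi\int_0^\pi\abs{U_d(\cos\theta)}\dd\theta=A_d$. Every element of $wT$ is conjugate in $\SU(2)$ to $\operatorname{diag}(i,-i)$, so its trace of $\Sym^d$ is $U_d(0)$. This value is $0$ for $d$ odd and $\pm1$ for $d$ even, which gives $H_-=B_d$. Substituting into \cref{prop:CMconstant} gives \eqref{eq:CMHdconstant}. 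For the final sentence of the theorem, $r(Q)=1$ holds for irreducible $Q$ by \cref{lem:topweight}, and $\eta_Q(E)\in\{0,1\}$ follows from \eqref{eq:etaCMirreducible}.

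The main obstacle is the input \cref{lem:CMMertens}, which is only sketched in the paper. Specifically, one needs that the split-prime part, after Fourier expansion of $H_d$ on $T$ and Brauer induction of $\tau_Q$ restricted to $\Gal(M/E)$, reduces to Hecke $L$-functions with no pole or zero at $s=1$. One also needs the error terms to be summable over the Fourier modes. Here the $(1+\abs n)^{-2}$ decay of the coefficients against the $\log(2+\abs n)$ conductor dependence suffices. A further point to check is that only the zero mode paired with the trivial finite character contributes a main term, and that this main term is exactly the fiber-product Haar average. Given the lemma as stated, the rest of the argument is routine.
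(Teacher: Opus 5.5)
Your proposal is correct and follows the paper's argument. The paper's proof is just admissibility of $\abs{\lambda_{\Sym^d\pi}}$ together with \cref{cor:CSTsieve}, using \cref{lem:CMMertens} as the reciprocal-prime input and \cref{prop:CMconstant} with $H_+=A_d$, $H_-=B_d$ to obtain \eqref{eq:CMHdconstant}; your decision to rerun that proof with \cref{lem:CMMertens} in place of \cref{prop:effectiveMertens}, rather than citing the corollary whose Hypothesis~\ref{hyp:effectiveCST} is never actually verified in the CM case, states the logical dependency more accurately than the paper does.
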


\begin{proof}
The function $n\mapsto\abs{\lambda_{\Sym^d\pi}(n)}$ is admissible, and the theorem follows from \cref{cor:CSTsieve}.
\end{proof}

\subsection{Dedekind zeta-function coefficients}

Let $K/\Q$ be a number field. Enlarge $M$ if necessary so that it contains both $M_Q$ and the Galois closure of $K$. Put
\[
H_K=\Gal(M/K),
\qquad
\tau_K=\Ind_{H_K}^G\1.
\]
This is the permutation character on the embeddings of $K$. At every unramified prime,
\[
a_K(p)=\tau_K(\Frob_p),
\]
where
\[
\zeta_K(s)=\sum_{n\ge1}\frac{a_K(n)}{n^s}.
\]
Taking $\Phi=\tau_K$ in \eqref{eq:kappa}, we obtain
\begin{equation}\label{eq:kappaQK}
\kappa_Q(\tau_K)=\langle\tau_Q,\tau_K\rangle_G=: \kappa_Q(K).
\end{equation}
The effective Chebotarev density theorem gives
\begin{equation}\label{eq:DedekindMertens}
\sum_{p\le x}\frac{\tau_Q(\Frob_p)a_K(p)}p
=\kappa_Q(K)\log\log x+O(1).
\end{equation}

The pairing has a concrete orbit interpretation. If $\tau_Q$ is transitive, say $\tau_Q=\Ind_H^G\1$, then Mackey theory gives
\begin{equation}\label{eq:doublecoset}
\kappa_Q(K)=\#(H\backslash G/H_K).
\end{equation}
If $Q$ is irreducible over $\Q$, then $\tau_Q$ is transitive, and linear disjointness of $M_Q$ from the Galois closure of $K$ gives $\kappa_Q(K)=1$. More generally, under the same linear-disjointness condition one has
\[
\kappa_Q(K)=r(Q).
\]
Indeed, on the compositum the finite Galois group is the direct product of the two relevant Galois groups, so the scalar product factors as $\langle\tau_Q,1\rangle\langle\tau_K,1\rangle=r(Q)$.

\begin{theorem}\label{thm:Dedekind}
We have
\begin{equation}\label{eq:Dedekindbound}
\sum_{\substack{\mathbf n\in\mathcal B\\Q(\mathbf n)\ne0}}
a_K(\abs{Q(\mathbf n)})
\ll_{Q,K,A}V(\log X^*)^{\kappa_Q(K)-r(Q)}.
\end{equation}
\end{theorem}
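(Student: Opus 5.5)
The plan is to deduce \cref{thm:Dedekind} from \cref{cor:CSTsieve} in the purely Galois setting. Take $S$ trivial, so that $G_S=G=\Gal(M/\Q)$, where $M$ contains both $M_Q$ and the Galois closure of $K$. Take $\Phi=\tau_K$, which is a nonnegative class function on $G$ bounded by $[K:\Q]$. At every prime unramified in $M$ we have $a_K(p)=\tau_K(\Frob_p)$, so the hypothesis $h(p)=\Phi(j_p)$ of \cref{cor:CSTsieve} holds at all good primes. The finitely many ramified primes are absorbed into the implied constant, as in \cref{thm:unified}.

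The first step is to check that $h=a_K$ is admissible. It is nonnegative and multiplicative. For every prime $p$ and $\nu\ge0$, $a_K(p^\nu)$ counts ideals of norm $p^\nu$. If $p\mathcal O_K=\prod_{i=1}^g\mathfrak p_i^{e_i}$ with residue degrees $f_i$, then this is the number of tuples $(k_1,\dots,k_g)$ of nonnegative integers with $\sum f_ik_i=\nu$. That number is at most $(\nu+1)^{g}\le(\nu+1)^{[K:\Q]}$. Hence \eqref{eq:admissible} holds with $B_0=[K:\Q]$, and $a_K(p)\le[K:\Q]$ is uniformly bounded. This bound is uniform over all primes, including the ramified ones.

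The second step is to verify \cref{hyp:effectiveCST} for $F_Q=\tau_Q\tau_K$, a class function on the finite group $G$. Writing $F_Q$ as a combination of indicator functions of conjugacy classes, the effective Chebotarev density theorem \cite{LagariasOdlyzko1977} gives, for the fixed extension $M/\Q$,
\[
\sum_{p\le x}F_Q(\Frob_p)=\langle\tau_Q,\tau_K\rangle_G\,\pi(x)+O_M\bigl(x\exp(-c\sqrt{\log x})\bigr).
\]
Because $x\exp(-c\sqrt{\log x})/x^2$ is integrable on $[3,\infty)$, condition \eqref{eq:integrableError} holds. Equivalently, one can cite \eqref{eq:pureGaloisMertens} and \eqref{eq:DedekindMertens} directly. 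The mean is $\mu(F_Q)=\kappa_Q(\tau_K)=\kappa_Q(K)$ by \eqref{eq:kappaQK}.

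Applying \cref{cor:CSTsieve} then gives \eqref{eq:Dedekindbound} with exponent $\kappa_Q(K)-r(Q)$, which is unconditional. I expect no serious obstacle. The only point needing care is the admissibility bound, which must hold uniformly at ramified primes, and the splitting-type argument above handles those primes as well.
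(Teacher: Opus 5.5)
Your proposal is correct and follows the paper's proof: you take $S$ trivial and $\Phi=\tau_K$, obtain the prime formula from effective Chebotarev (equivalently \eqref{eq:DedekindMertens}), check that $a_K$ is admissible, and apply \cref{cor:CSTsieve}. The only difference is cosmetic: you bound $a_K(p^\nu)$ by $(\nu+1)^{g}\le(\nu+1)^{[K:\Q]}$ using the splitting type, whereas the paper uses $a_K(p^\nu)\le\binom{\nu+[K:\Q]-1}{[K:\Q]-1}$, and both bounds hold uniformly at all primes.
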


\begin{proof}
Equation \eqref{eq:DedekindMertens} supplies the required prime formula. Moreover,
\[
a_K(p^\nu)\le
\binom{\nu+[K:\Q]-1}{[K:\Q]-1},
\]
so $a_K$ is admissible. The theorem follows from \cref{cor:CSTsieve}.
\end{proof}

\subsection{The mixed case}

The class-function formulation also permits finite and automorphic weights to be multiplied without changing the sieve argument. Let $K/\Q$ be a number field, let $M/\Q$ be a finite Galois extension containing $M_Q$ and the Galois closure of $K$, and let $\pi$ correspond to a primitive non-CM holomorphic newform over $\Q$ with trivial central character. With $G=\Gal(M/\Q)$, define $\tau_K$ as in the preceding subsection and take
\[
\Phi(g,s)=\tau_K(g)\abs{\tr(\Sym^d s)}.
\]
At good primes,
\[
\Phi(j_p)=a_K(p)\abs{\lambda_{\Sym^d\pi}(p)}.
\]
The relevant logarithmic constant is
\begin{equation}\label{eq:mixedconstant}
\kappa_Q(K,\pi,d)
=\int_{G_S}\tau_Q(g)\tau_K(g)\abs{\tr(\Sym^d s)}\,\dd\mu_{G_S}(g,s).
\end{equation}
Since $\mathrm{SU}(2)$ is connected and $G$ is finite, the joint group is $G_S=G\times\mathrm{SU}(2)$, and therefore
\begin{equation}\label{eq:mixedfactor}
\kappa_Q(K,\pi,d)=\kappa_Q(K)\kappa_d^{\nCM}.
\end{equation}

\begin{theorem}\label{thm:mixed}
Let $K$ and $\pi$ be as above, and assume that the test function associated with $(Q,K,\pi)$ satisfies Hypothesis~\ref{hyp:effectiveCST}. Then
\begin{equation}\label{eq:mixedbound}
\sum_{\substack{\mathbf n\in\mathcal B\\Q(\mathbf n)\ne0}}
a_K(\abs{Q(\mathbf n)})
\abs{\lambda_{\Sym^d\pi}(\abs{Q(\mathbf n)})}
\ll V(\log X^*)^{\kappa_Q(K)\kappa_d^{\nCM}-r(Q)}.
\end{equation}
\end{theorem}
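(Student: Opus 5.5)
The plan is to deduce \cref{thm:mixed} directly from \cref{cor:CSTsieve}, with $h(n)=a_K(n)\abs{\lambda_{\Sym^d\pi}(n)}$ and the joint group $G_S=G\times\SU(2)$. This requires three checks: admissibility of $h$, the prime-value identity $h(p)=\Phi(j_p)$, and the identification of the logarithmic constant.

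First, $h$ is multiplicative and nonnegative, being a product of two nonnegative multiplicative functions. At prime powers we combine two earlier bounds: the proof of \cref{thm:Dedekind} gives $a_K(p^\nu)\le\binom{\nu+[K:\Q]-1}{[K:\Q]-1}\ll(\nu+1)^{[K:\Q]-1}$, and the proof of \cref{thm:nonCM} gives $\abs{\lambda_{\Sym^d\pi}(p^\nu)}\ll_d(\nu+1)^d$. Hence
\[
h(p^\nu)\ll(\nu+1)^{[K:\Q]-1+d},
\]
so $h$ is admissible with $B_0=[K:\Q]-1+d$. In particular $h(p)\le[K:\Q](d+1)$ is bounded.

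Second, at every prime unramified in $M$ and not dividing the level of $\pi$, we have $a_K(p)=\tau_K(\Frob_p)$ and $\lambda_{\Sym^d\pi}(p)=\tr\Sym^d(s_p)$. Therefore $h(p)=\Phi(j_p)$ with $\Phi(g,s)=\tau_K(g)\abs{\tr(\Sym^d s)}$, which is a bounded, nonnegative, continuous class function on $G\times\SU(2)$. The Mertens-effective hypothesis for $F_Q=\tau_Q\Phi$ is exactly the assumption made in \cref{thm:mixed}. So \cref{cor:CSTsieve} gives the bound with exponent $\kappa_Q(K,\pi,d)-r(Q)$.

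It then remains to verify \eqref{eq:mixedfactor}. The function $F_Q(g,s)=W(g)H(s)$ factors, with $W=\tau_Q\tau_K$ and $H=H_d$. Applying \eqref{eq:kappafactor} gives
\[
\kappa_Q(K,\pi,d)=\langle\tau_Q,\tau_K\rangle_G\,\kappa_d^{\nCM}=\kappa_Q(K)\kappa_d^{\nCM},
\]
using that $\tau_K$ is real-valued, so $\langle\tau_Q\tau_K,1\rangle_G=\langle\tau_Q,\tau_K\rangle_G$.

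The only step needing justification beyond bookkeeping is that the joint group really is the full product $G\times\SU(2)$. My argument would be as follows. $G_S$ is a closed subgroup of $G\times\SU(2)$ surjecting onto both factors. Its intersection with $\{1\}\times\SU(2)$ is a normal subgroup $N$ of $\SU(2)$, and $\SU(2)/N$ is a quotient of a finite group, hence finite. Since $\SU(2)$ is connected, $N=\SU(2)$, so $G_S$ contains $\{1\}\times\SU(2)$. Combined with surjectivity onto $G$, this forces $G_S=G\times\SU(2)$. This is a Goursat-type argument. Once this group-theoretic fact is in place (it is also implicit in the formulation of the hypothesis), the theorem follows.
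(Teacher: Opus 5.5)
Your proposal is correct and follows the paper's proof: admissibility from the two prime-power bounds, then \cref{cor:CSTsieve} combined with the factorization \eqref{eq:mixedfactor}. The only difference is that you spell out, via a Goursat-type argument, the product structure $G_S=G\times\SU(2)$ that the paper simply asserts from the connectedness of $\SU(2)$ and the finiteness of $G$; that argument needs $N$ to be closed, hence open of finite index, which holds because $G_S$ is closed.
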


\begin{proof}
The bound
\[
a_K(p^\nu)\ll_K(\nu+1)^{[K:\Q]-1}
\]
and temperedness of the holomorphic newform give
\[
\abs{\lambda_{\Sym^d\pi}(p^\nu)}\ll_d(\nu+1)^d.
\]
Hence the product of the two arithmetic weights is admissible. Applying \cref{cor:CSTsieve} and \eqref{eq:mixedfactor} finishes the proof.
\end{proof}

\section{Applications to congruences of modular forms}\label{sec:congruences}

We now give an arithmetic application to congruences of Fourier coefficients of modular forms. It may be viewed as a polynomial-value analogue of the classical divisibility phenomena studied by Serre \cite{Serre1976}.

Let
\[
f(z)=\sum_{n\ge1}a_f(n)q^n
\]
be a normalized cuspidal Hecke eigenform of weight $k\ge2$, level $N$, and nebentypus $\chi$, and let $E_f$ be its coefficient field. Fix a prime ideal
\[
\lambda\subset\mathcal O_{E_f}
\]
above a rational prime $\ell$, and write
\[
k_\lambda=\mathcal O_{E_f}/\lambda.
\]
Associated with $f$ and $\lambda$ is a residual Galois representation
\[
\bar\rho_{f,\lambda}:\GQ\longrightarrow\mathrm{GL}_2(k_\lambda)
\]
such that, for every prime $p\nmid N\ell$,
\begin{equation}\label{eq:residualtrace}
\tr\bar\rho_{f,\lambda}(\Frob_p)\equiv a_f(p)\pmod\lambda.
\end{equation}

\subsection{Divisibility of Fourier coefficients along polynomial values}

Let $Q$ be a primitive square-free polynomial in $\Z[x_1,\ldots,x_m]$, and let $M_Q$ be the component field introduced in \cref{sec:local}. Let $L_{f,\lambda}$ be the finite Galois extension fixed by $\ker\bar\rho_{f,\lambda}$, and put
\[
M=M_QL_{f,\lambda},
\qquad
G=\Gal(M/\Q).
\]
Both $\tau_Q$ and $\bar\rho_{f,\lambda}$ are regarded as representations of $G$. Define the trace-zero class function
\begin{equation}\label{eq:zflambda}
z_{f,\lambda}(\sigma)=
\begin{cases}
1,&\tr\bar\rho_{f,\lambda}(\sigma)=0\text{ in }k_\lambda,\\
0,&\tr\bar\rho_{f,\lambda}(\sigma)\ne0.
\end{cases}
\end{equation}
Set
\begin{equation}\label{eq:deltageneral}
\delta_{Q,f,\lambda}
:=\langle\tau_Q,z_{f,\lambda}\rangle_G.
\end{equation}

\begin{theorem}\label{thm:divisibility}
Let $Q\in\Z[x_1,\ldots,x_m]$ be primitive and square-free over $\Q$, and let $\mathcal B$ satisfy (B). Then
\begin{equation}\label{eq:divisibility}
\#\{\mathbf n\in\mathcal B:Q(\mathbf n)\ne0,\ \lambda\nmid a_f(\abs{Q(\mathbf n)})\}
\ll_{Q,f,\lambda,A}
\frac{V}{(\log X^*)^{\delta_{Q,f,\lambda}}}.
\end{equation}
In particular, whenever $\delta_{Q,f,\lambda}>0$, the coefficient $a_f(\abs{Q(\mathbf n)})$ is divisible by $\lambda$ for all but a logarithmically sparse set of polynomial parameters.
\end{theorem}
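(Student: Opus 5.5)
We reduce the count to an upper bound for the sum of a suitable admissible multiplicative function and then invoke \cref{cor:CSTsieve} in the purely Galois case.

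\textbf{The indicator function.} Let $h$ be the multiplicative function with $h(p^\nu)=\1\{\lambda\nmid a_f(p^\nu)\}$ for every prime power. This makes $h$ bounded by $1$, nonnegative and admissible, with $B_0=0$. Since $a_f$ is multiplicative, $a_f(n)=\prod_{p^\nu\parallel n}a_f(p^\nu)$. Because $\lambda$ is prime, $\lambda\nmid a_f(n)$ holds exactly when $\lambda\nmid a_f(p^\nu)$ for every $p^\nu\parallel n$. Hence $\1\{\lambda\nmid a_f(n)\}=h(n)$, and the count in \eqref{eq:divisibility} is exactly $\sum_{\mathbf n\in\mathcal B,\,Q(\mathbf n)\ne0}h(\abs{Q(\mathbf n)})$.

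\textbf{The prime values.} At primes $p\nmid N\ell$ unramified in $M$, \eqref{eq:residualtrace} gives
\[
h(p)=1-z_{f,\lambda}(\Frob_p)=\Phi(\Frob_p),\qquad \Phi:=1-z_{f,\lambda}.
\]
So $\Phi$ is a class function on $G=\Gal(M_QL_{f,\lambda}/\Q)$, taking values in $\{0,1\}$, and we take $S$ trivial. The finitely many excluded primes are bad primes and are absorbed into the constant.

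\textbf{The exponent.} By \eqref{eq:pureGaloisMertens}, the effective Chebotarev density theorem holds unconditionally for $F_Q=\tau_Q\Phi$, so Hypothesis~\ref{hyp:effectiveCST} is satisfied. This is the one place where some care is needed: \eqref{eq:pureGaloisMertens} is really the Mertens formula with an $O(1)$ error rather than the integrability condition \eqref{eq:integrableError}. The proof of \cref{cor:CSTsieve}, however, only uses the Mertens form \eqref{eq:effectiveMertens}. We can therefore apply \cref{thm:unified} directly, combined with \cref{lem:topweight} and \eqref{eq:pureGaloisMertens}. This gives
\[
\sum_{p\le x}\frac{\omega_Q(p)(h(p)-1)}p=-\langle\tau_Q,z_{f,\lambda}\rangle_G\log\log x+O(1),
\]
using $\omega_Q(p)=\tau_Q(\Frob_p)+O(p^{-1/2})$ and the boundedness of $h$. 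Equivalently,
\[
\kappa_Q(\Phi)-r(Q)=\langle\tau_Q,1-z_{f,\lambda}\rangle_G-\langle\tau_Q,1\rangle_G=-\delta_{Q,f,\lambda}.
\]
Substituting into \eqref{eq:unified} yields the bound $V(\log X^*)^{-\delta_{Q,f,\lambda}}$, which is \eqref{eq:divisibility}.

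The final assertion follows at once: when $\delta_{Q,f,\lambda}>0$, the exceptional set has size $o(V)$, saving a fixed power of $\log X^*$. No genuine obstacle remains beyond checking that $h$ is multiplicative and admissible, and that the $O(1)$ Mertens input suffices.
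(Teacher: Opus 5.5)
Your proof is correct and follows the paper's argument. The paper likewise takes the multiplicative indicator $\1_{\{\lambda\nmid a_f(n)\}}$, identifies its prime values with $1-z_{f,\lambda}(\Frob_p)$, computes $\kappa_Q(1-z_{f,\lambda})=r(Q)-\delta_{Q,f,\lambda}$ via \eqref{eq:pureGaloisMertens}, and applies the sieve bound. The only difference is that you inline the proof of \cref{cor:CSTsieve} through \cref{thm:unified} to avoid the integrability formulation; this is harmless but unnecessary, since for the fixed field $M$ the effective Chebotarev (Lagarias--Odlyzko) error term $O(x e^{-c\sqrt{\log x}})$ already satisfies \eqref{eq:integrableError}.
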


\begin{proof}
Define
\[
b_{f,\lambda}(n)=\1_{\{\lambda\nmid a_f(n)\}}.
\]
Since $a_f$ is multiplicative, $b_{f,\lambda}$ is multiplicative; moreover $0\le b_{f,\lambda}(p^\nu)\le1$, so it is admissible. For every prime $p$ outside a fixed finite set, \eqref{eq:residualtrace} gives
\[
b_{f,\lambda}(p)=1-z_{f,\lambda}(\Frob_p).
\]
This is a purely finite Galois weight. By \eqref{eq:pureGaloisMertens},
\[
\kappa_Q(1-z_{f,\lambda})
=\langle\tau_Q,1-z_{f,\lambda}\rangle_G
=r(Q)-\delta_{Q,f,\lambda}.
\]
Applying \cref{cor:CSTsieve} proves \eqref{eq:divisibility}.
\end{proof}

In the linearly disjoint case, the constant has a simple form. Suppose that
\begin{equation}\label{eq:LDresidual}
M_Q\cap L_{f,\lambda}=\Q.
\end{equation}
Denote
\[
G_{f,\lambda}=\im\bar\rho_{f,\lambda}.
\]
Then $G\simeq G_Q\times G_{f,\lambda}$, and hence
\begin{equation}\label{eq:deltaLD}
\delta_{Q,f,\lambda}=r(Q)\vartheta_{f,\lambda},
\qquad
\vartheta_{f,\lambda}:=
\frac{\#\{g\in G_{f,\lambda}:\tr g=0\}}{\abs{G_{f,\lambda}}}.
\end{equation}

\subsection{Full residual image case}

Suppose now that
\[
G_{f,\lambda}=\mathrm{GL}_2(\F_q),
\qquad q=\#k_\lambda.
\]
For every finite field $\F_q$,
\begin{equation}\label{eq:tracezerocount}
\#\{g\in\mathrm{GL}_2(\F_q):\tr g=0\}=q^2(q-1).
\end{equation}
Indeed, a trace-zero matrix has the form
\[
\begin{pmatrix}a&b\\c&-a\end{pmatrix}.
\]
There are $q^3$ such matrices, and the singular ones are given by $a^2+bc=0$. If $b\ne0$, then $a$ and $b$ determine $c$, giving $q(q-1)$ solutions; if $b=0$, then $a=0$ and $c$ is arbitrary, giving $q$ more. Thus there are $q^2$ singular trace-zero matrices, proving \eqref{eq:tracezerocount}. Consequently,
\begin{equation}\label{eq:fullimagedensity}
\frac{\#\{g\in\mathrm{GL}_2(\F_q):\tr g=0\}}{\abs{\mathrm{GL}_2(\F_q)}}
=\frac{q}{q^2-1}.
\end{equation}

\begin{corollary}\label{cor:fullimage}
Assume \eqref{eq:LDresidual} and
\[
\im\bar\rho_{f,\lambda}=\mathrm{GL}_2(\F_q).
\]
Then
\[
\#\{\mathbf n\in\mathcal B:Q(\mathbf n)\ne0,\ \lambda\nmid a_f(\abs{Q(\mathbf n)})\}
\ll_{Q,f,\lambda,A}
\frac{V}{(\log X^*)^{r(Q)q/(q^2-1)}}.
\]
\end{corollary}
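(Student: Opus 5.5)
The corollary follows by combining \cref{thm:divisibility} with the linearly disjoint formula \eqref{eq:deltaLD} and the trace-zero count \eqref{eq:fullimagedensity}. The plan is to compute $\delta_{Q,f,\lambda}$ explicitly under the stated hypotheses and substitute it into \eqref{eq:divisibility}.

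First I would justify the decomposition $G\simeq G_Q\times G_{f,\lambda}$. Under \eqref{eq:LDresidual}, restriction identifies $\Gal(M_QL_{f,\lambda}/\Q)$ with $\Gal(M_Q/\Q)\times\Gal(L_{f,\lambda}/\Q)$. Since $L_{f,\lambda}$ is the fixed field of $\ker\bar\rho_{f,\lambda}$, the representation $\bar\rho_{f,\lambda}$ induces an isomorphism $\Gal(L_{f,\lambda}/\Q)\simeq G_{f,\lambda}$. On this product, $\tau_Q$ is inflated from the first factor. The class function $z_{f,\lambda}$ is inflated from the second factor, where it is the indicator of trace-zero elements of $G_{f,\lambda}$.

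Next I would compute the inner product. It factors as
\[
\langle\tau_Q,z_{f,\lambda}\rangle_G=\langle\tau_Q,1\rangle_{G_Q}\cdot\frac1{\abs{G_{f,\lambda}}}\sum_{g\in G_{f,\lambda}}\1_{\{\tr g=0\}}=r(Q)\vartheta_{f,\lambda},
\]
using $\langle\tau_Q,1\rangle_{G_Q}=r(Q)$ from \cref{lem:topweight}. This is \eqref{eq:deltaLD}. With $G_{f,\lambda}=\GL_2(\F_q)$ and $\abs{\GL_2(\F_q)}=(q^2-1)(q^2-q)$, the count \eqref{eq:tracezerocount} gives $\vartheta_{f,\lambda}=q^2(q-1)/\bigl((q^2-1)q(q-1)\bigr)=q/(q^2-1)$. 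This is \eqref{eq:fullimagedensity}, so $\delta_{Q,f,\lambda}=r(Q)q/(q^2-1)$.

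Finally, I would insert this value of $\delta_{Q,f,\lambda}$ into \eqref{eq:divisibility}; the implied constant depends on $Q,f,\lambda,A$ as stated. There is no real obstacle here. The only point needing care is checking that the trace condition on $G$ depends only on the image in $G_{f,\lambda}$, so that $z_{f,\lambda}$ is genuinely inflated from the second factor and the product decomposition applies. The trace-zero count itself was already verified in \eqref{eq:tracezerocount}.
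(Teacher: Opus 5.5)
Your proposal is correct and follows the paper's own route. The paper obtains the corollary by inserting $\delta_{Q,f,\lambda}=r(Q)\vartheta_{f,\lambda}$ from \eqref{eq:deltaLD} (via $G\simeq G_Q\times G_{f,\lambda}$ under \eqref{eq:LDresidual}) and $\vartheta_{f,\lambda}=q/(q^2-1)$ from \eqref{eq:fullimagedensity} into \cref{thm:divisibility}, and your explicit check that $z_{f,\lambda}$ is inflated from the $G_{f,\lambda}$ factor fills in the one step the paper leaves implicit.
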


Let $E/\Q$ be an elliptic curve and, for a prime $\ell$, let
\[
\bar\rho_{E,\ell}:\GQ\longrightarrow\mathrm{GL}_2(\F_\ell)
\]
be the representation on $E[\ell]$.

\begin{corollary}\label{cor:elliptic}
Assume that $\bar\rho_{E,\ell}$ is surjective and that the $\ell$-division field $\Q(E[\ell])$ is linearly disjoint from $M_Q$. Then
\[
\#\{\mathbf n\in\mathcal B:Q(\mathbf n)\ne0,\ \ell\nmid a_E(\abs{Q(\mathbf n)})\}
\ll_{Q,E,\ell,A}
\frac{V}{(\log X^*)^{r(Q)\ell/(\ell^2-1)}}.
\]
\end{corollary}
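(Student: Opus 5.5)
This is a specialization of \cref{cor:fullimage} to the weight-two newform attached to $E$, so the plan is to set up that dictionary and then invoke the corollary. By modularity (Wiles, Taylor--Wiles, Breuil--Conrad--Diamond--Taylor), $E$ corresponds to a normalized weight-two newform $f_E$ of level $N_E$, the conductor of $E$, with trivial nebentypus and rational coefficients $a_{f_E}(n)=a_E(n)$ for all $n\ge1$. Here $a_E(n)$ is defined by the $L$-series of $E$, which is how it is used in the statement. The coefficient field is $\Q$. I take $\lambda=(\ell)$, so $k_\lambda=\F_\ell$ and $q=\ell$. The residual representation $\bar\rho_{f_E,\ell}$ may then be taken to be $\bar\rho_{E,\ell}$ acting on $E[\ell]$. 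For $p\nmid N_E\ell$ one has $\tr\bar\rho_{E,\ell}(\Frob_p)\equiv p+1-\#E(\F_p)=a_E(p)\pmod\ell$, which verifies \eqref{eq:residualtrace}. Consequently $L_{f,\lambda}$ is the fixed field of $\ker\bar\rho_{E,\ell}$, namely $\Q(E[\ell])$.

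With this identification, the hypotheses of \cref{cor:fullimage} are exactly those assumed. Surjectivity of $\bar\rho_{E,\ell}$ gives $\im\bar\rho_{f,\lambda}=\GL_2(\F_\ell)$. Linear disjointness of $\Q(E[\ell])$ and $M_Q$ gives $M_Q\cap L_{f,\lambda}=\Q$, which is \eqref{eq:LDresidual}. Divisibility by $\lambda$ is divisibility by $\ell$ in $\Z$. The exponent $r(Q)q/(q^2-1)$ therefore becomes $r(Q)\ell/(\ell^2-1)$, which comes from \eqref{eq:deltaLD} and \eqref{eq:fullimagedensity} with $q=\ell$.

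The one point to check carefully is that \cref{thm:divisibility}, and hence \cref{cor:fullimage}, only uses multiplicativity of $a_f$ and the trace congruence at primes outside a finite set. Any residual ambiguity in the choice of lattice affects only the semisimplification, and the trace is unchanged; this is harmless here because the surjective image is irreducible. The finitely many bad primes dividing $N_E\ell$ are absorbed into the implied constant, as in \cref{thm:unified}. So the implied constant depends on $Q,E,\ell,A$, and the corollary follows. I expect no real obstacle: the only external input is modularity, which makes $a_E$ the coefficients of a Hecke eigenform, and the rest is bookkeeping.
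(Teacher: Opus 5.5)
Your proposal is correct and follows the same route as the paper: the corollary is \cref{cor:fullimage} specialized to the weight-two newform attached to $E$, with $\lambda=(\ell)$, $q=\ell$ and $L_{f,\lambda}=\Q(E[\ell])$, exactly as you set it up. As you note, the underlying argument uses only the multiplicativity of $a_E$ and the congruence $\tr\bar\rho_{E,\ell}(\Frob_p)\equiv a_E(p)\pmod{\ell}$ at good primes, so modularity serves only to let you quote the theorem verbatim.
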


\subsection{Eisenstein congruence case}\label{subsec:Eisenstein}

We now consider the case in which the residual representation attached to $f$ is reducible. Assume that $\ell$ is odd and that $f$ is congruent modulo $\lambda$ to an Eisenstein eigensystem in the sense that, for all but finitely many primes $p$,
\begin{equation}\label{eq:Eisensteincong}
a_f(p)\equiv\psi_1(p)+\psi_2(p)p^{k-1}\pmod\lambda,
\end{equation}
where $\psi_1$ and $\psi_2$ are finite-order characters. Put
\[
\chi_1=\bar\psi_1,
\qquad
\chi_2=\bar\psi_2\bar\chi_\ell^{\,k-1},
\]
where $\bar\chi_\ell$ denotes the mod-$\ell$ cyclotomic character. Then
\[
\bar\rho_{f,\lambda}^{\mathrm{ss}}\simeq\chi_1\oplus\chi_2.
\]
After a choice of basis, one may write
\[
\bar\rho_{f,\lambda}(\sigma)=
\begin{pmatrix}
\chi_1(\sigma)&*\\0&\chi_2(\sigma)
\end{pmatrix}.
\]
Define the ratio character
\begin{equation}\label{eq:xidef}
\xi=\chi_2\chi_1^{-1}
=\bar\psi_2\bar\psi_1^{-1}\bar\chi_\ell^{\,k-1}.
\end{equation}
Since $\chi_1(\sigma)\ne0$,
\begin{equation}\label{eq:tracezeroXi}
\tr\bar\rho_{f,\lambda}(\sigma)=0
\quad\Longleftrightarrow\quad
\xi(\sigma)=-1.
\end{equation}
Let
\[
L_\xi=\overline{\Q}^{\ker\xi},
\qquad
\Gamma_\xi=\Gal(L_\xi/\Q)\simeq\im\xi.
\]
Since $\im\xi$ is a finite subgroup of $k_\lambda^\times$, it is cyclic. Write
\[
m_\xi=\abs{\im\xi}.
\]
Put
\[
M_\xi=M_QL_\xi,
\qquad
G_\xi=\Gal(M_\xi/\Q).
\]
Both $\tau_Q$ and the class function $\1_{\{\xi=-1\}}$ factor through $G_\xi$. Thus their scalar product computed in the larger residual group $\Gal(M_QL_{f,\lambda}/\Q)$ is the same as the scalar product computed on $G_\xi$.

Because $\ell$ is odd, $\im\xi$ contains $-1$ if and only if $m_\xi$ is even. Hence the trace-zero density in the Eisenstein quotient is
\begin{equation}\label{eq:thetaXi}
\vartheta_\xi
:=\frac{\#\{\gamma\in\Gamma_\xi:\xi(\gamma)=-1\}}{\abs{\Gamma_\xi}}
=
\begin{cases}
1/m_\xi,&m_\xi\text{ even},\\
0,&m_\xi\text{ odd}.
\end{cases}
\end{equation}

\begin{proposition}\label{prop:Eisenstein}
Retain the notation above. Then
\begin{equation}\label{eq:Eisensteinbound}
\#\{\mathbf n\in\mathcal B:Q(\mathbf n)\ne0,\ \lambda\nmid a_f(\abs{Q(\mathbf n)})\}
\ll_{Q,f,\lambda,A}
\frac{V}{(\log X^*)^{\delta_{Q,f,\lambda}}},
\end{equation}
where
\begin{equation}\label{eq:deltaXi}
\delta_{Q,f,\lambda}
=\left\langle\tau_Q,\1_{\{\xi=-1\}}\right\rangle_{G_\xi}.
\end{equation}
If moreover
\begin{equation}\label{eq:LDxi}
M_Q\cap L_\xi=\Q,
\end{equation}
then
\begin{equation}\label{eq:deltaXiLD}
\delta_{Q,f,\lambda}=r(Q)\vartheta_\xi
=
\begin{cases}
r(Q)/m_\xi,&m_\xi\text{ even},\\
0,&m_\xi\text{ odd}.
\end{cases}
\end{equation}
\end{proposition}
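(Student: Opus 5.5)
The plan is to reduce \cref{prop:Eisenstein} to \cref{thm:divisibility} and then compute the pairing. First I will check that the general exponent $\delta_{Q,f,\lambda}=\langle\tau_Q,z_{f,\lambda}\rangle_G$ with $G=\Gal(M_QL_{f,\lambda}/\Q)$ equals the right-hand side of \eqref{eq:deltaXi}.

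By \eqref{eq:tracezeroXi}, the trace-zero indicator is $z_{f,\lambda}=\1_{\{\xi=-1\}}$ as class functions on $G$. The trace of the upper-triangular matrix is $\chi_1+\chi_2=\chi_1(1+\xi)$, and $\chi_1$ takes values in $k_\lambda^\times$. Both $\tau_Q$ and $\1_{\{\xi=-1\}}$ are inflated from the quotient $G_\xi=\Gal(M_QL_\xi/\Q)$ of $G$; this needs $L_\xi\subseteq L_{f,\lambda}$, which holds because $\xi$ is read off from the diagonal of $\bar\rho_{f,\lambda}$. The normalized inner product of inflated class functions equals their inner product on the quotient, since each fibre of $G\to G_\xi$ has the same size. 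Hence $\langle\tau_Q,z_{f,\lambda}\rangle_G=\langle\tau_Q,\1_{\{\xi=-1\}}\rangle_{G_\xi}$, and \eqref{eq:Eisensteinbound} follows directly from \cref{thm:divisibility}. No further analytic input is needed, since the prime distribution is purely Galois and \eqref{eq:pureGaloisMertens} applies.

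Next, assume the linear disjointness condition \eqref{eq:LDxi}. Then restriction gives $G_\xi\simeq G_Q\times\Gamma_\xi$, with $\tau_Q$ depending only on the first factor and $\1_{\{\xi=-1\}}$ only on the second. The inner product therefore factors as
\[
\langle\tau_Q,1\rangle_{G_Q}\cdot\frac{\#\{\gamma\in\Gamma_\xi:\xi(\gamma)=-1\}}{\abs{\Gamma_\xi}}=r(Q)\vartheta_\xi,
\]
using \cref{lem:topweight} for $\langle\tau_Q,1\rangle_{G_Q}=r(Q)$.

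It remains to justify \eqref{eq:thetaXi}. The map $\xi:\Gamma_\xi\to\im\xi$ is an isomorphism onto a cyclic group of order $m_\xi$. Since $\ell$ is odd, $-1\ne1$ in $k_\lambda$, and $-1$ is the unique element of order $2$ in $k_\lambda^\times$. Thus $-1\in\im\xi$ exactly when $m_\xi$ is even, and in that case exactly one element of $\Gamma_\xi$ maps to $-1$. This gives density $1/m_\xi$ when $m_\xi$ is even and $0$ when $m_\xi$ is odd.

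The only step needing care is the compatibility of the two groups. One must confirm that $\ker\bar\rho_{f,\lambda}\subseteq\ker\xi$, so that $L_\xi\subseteq L_{f,\lambda}$. One must also confirm that \eqref{eq:tracezeroXi} holds for the actual, not merely semisimplified, representation. Both follow from the triangular form, because the trace and diagonal characters depend only on the semisimplification.
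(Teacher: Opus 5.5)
Your proposal is correct and follows the paper's proof. Like the paper, you identify the trace-zero indicator $z_{f,\lambda}$ with $\1_{\{\xi=-1\}}$ via \eqref{eq:tracezeroXi}, invoke \cref{thm:divisibility}, pass to the quotient $G_\xi$, and factor the pairing as $r(Q)\vartheta_\xi$ under \eqref{eq:LDxi}. Your extra checks, namely $\ker\bar\rho_{f,\lambda}\subseteq\ker\xi$ (so that $G_\xi$ really is a quotient of $\Gal(M_QL_{f,\lambda}/\Q)$) and the fibre-count argument for inflation, only spell out what the paper asserts in the paragraph preceding the proposition and in \eqref{eq:thetaXi}.
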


\begin{proof}
At every prime outside a fixed finite set, \eqref{eq:tracezeroXi} gives
\[
\lambda\mid a_f(p)
\quad\Longleftrightarrow\quad
\tr\bar\rho_{f,\lambda}(\Frob_p)=0
\quad\Longleftrightarrow\quad
\xi(\Frob_p)=-1.
\]
Thus the trace-zero class function in \cref{thm:divisibility} is exactly $\1_{\{\xi=-1\}}$, proving \eqref{eq:Eisensteinbound} and \eqref{eq:deltaXi}. Under \eqref{eq:LDxi}, we have $G_\xi\simeq G_Q\times\Gamma_\xi$, so the pairing factors:
\[
\delta_{Q,f,\lambda}
=\langle\tau_Q,1\rangle
\frac{\#\{\gamma\in\Gamma_\xi:\xi(\gamma)=-1\}}{\abs{\Gamma_\xi}}
=r(Q)\vartheta_\xi.
\]
Equation \eqref{eq:deltaXiLD} follows from \eqref{eq:thetaXi}.
\end{proof}

The constant can be made completely explicit even when the polynomial component field and the Eisenstein field are entangled. Write
\[
Q=R_1\cdots R_{r(Q)}
\]
as a product of distinct irreducible polynomials over $\Q$. For each $j$, choose a geometric irreducible component of $V(R_j)$ and let $H_j\subseteq G_\xi$ be its stabilizer. Then
\[
\tau_{R_j}=\Ind_{H_j}^{G_\xi}\1.
\]
Set
\[
m_j=\abs{\im(\xi\vert_{H_j})}.
\]

\begin{corollary}\label{cor:entangledEisenstein}
With the notation above,
\begin{equation}\label{eq:entangledformula}
\delta_{Q,f,\lambda}
=\sum_{\substack{1\le j\le r(Q)\\m_j\ \mathrm{even}}}\frac1{m_j}.
\end{equation}
\end{corollary}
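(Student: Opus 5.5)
We start from \eqref{eq:deltaXi}, which gives $\delta_{Q,f,\lambda}=\langle\tau_Q,\1_{\{\xi=-1\}}\rangle_{G_\xi}$. The first step is to split $\tau_Q$ along the irreducible factors. The geometric components of $V(Q)$ are the disjoint union of those of the $V(R_j)$: the $R_j$ are distinct and primitive irreducible over $\Q$, so they share no geometric factor. Each $V(R_j)$ is a single $G_\xi$-orbit of components. Hence
\[
\tau_Q=\sum_{j=1}^{r(Q)}\tau_{R_j}=\sum_{j=1}^{r(Q)}\Ind_{H_j}^{G_\xi}\1 .
\]
Bilinearity of the pairing then reduces the claim to showing, for each $j$,
\[
\bigl\langle \Ind_{H_j}^{G_\xi}\1,\ \1_{\{\xi=-1\}}\bigr\rangle_{G_\xi}=\begin{cases}1/m_j,& m_j\text{ even},\\ 0,& m_j\text{ odd}.\end{cases}
\]

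The second step is Frobenius reciprocity. The function $\1_{\{\xi=-1\}}$ is a class function because $\xi$ is a character, and it is real-valued. Therefore
\[
\bigl\langle \Ind_{H_j}^{G_\xi}\1,\1_{\{\xi=-1\}}\bigr\rangle_{G_\xi}
=\frac{1}{\abs{H_j}}\#\{h\in H_j:\xi(h)=-1\}.
\]
The restriction $\xi|_{H_j}\colon H_j\to k_\lambda^\times$ is a homomorphism onto the cyclic group $C_j=\im(\xi|_{H_j})$, which has order $m_j$. Every fibre of this homomorphism has size $\abs{H_j}/m_j$. So the proportion of $h\in H_j$ with $\xi(h)=-1$ is $1/m_j$ if $-1\in C_j$, and $0$ otherwise.

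The third step decides when $-1\in C_j$. Since $\ell$ is odd, $-1$ is the unique element of order $2$ in the cyclic group $k_\lambda^\times$. A subgroup $C_j$ of this cyclic group contains an element of order $2$ exactly when $\abs{C_j}=m_j$ is even, and that element must be $-1$. This is the same observation used to prove \eqref{eq:thetaXi}. Summing over $j$ gives \eqref{eq:entangledformula}.

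The main point needing care is the first step: we must check that the decomposition $\tau_Q=\sum_j\Ind_{H_j}^{G_\xi}\1$ holds on $G_\xi$ and not only on $G_Q$. This follows because $\tau_Q$ is inflated from $G_Q$, a quotient of $G_\xi$. The stabilizer in $G_\xi$ of a component is the full preimage of its stabilizer in $G_Q$, and the permutation action of $G_\xi$ on the components is transitive on each $V(R_j)$, so the identity persists after inflation.
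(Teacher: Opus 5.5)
Your proposal is correct and follows the paper's proof. Frobenius reciprocity turns each $\langle\tau_{R_j},\1_{\{\xi=-1\}}\rangle_{G_\xi}$ into the proportion of $h\in H_j$ with $\xi(h)=-1$, which is $1/m_j$ or $0$ according to the parity of $m_j$, and summing over $j$ gives the formula; you also make explicit two points the paper leaves tacit, namely the decomposition $\tau_Q=\sum_j\Ind_{H_j}^{G_\xi}\1$ on $G_\xi$ and the fact that $-1$ is the unique element of order $2$ in $k_\lambda^\times$ because $\ell$ is odd.
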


\begin{proof}
Since $\1_{\{\xi=-1\}}$ is a class function, Frobenius reciprocity gives
\[
\left\langle\tau_{R_j},\1_{\{\xi=-1\}}\right\rangle_{G_\xi}
=\frac1{\abs{H_j}}\#\{h\in H_j:\xi(h)=-1\}.
\]
The image of $\xi\vert_{H_j}$ is cyclic of order $m_j$. Hence the last quantity is $1/m_j$ when $m_j$ is even and $0$ when $m_j$ is odd. Summing over $j$ gives \eqref{eq:entangledformula}.
\end{proof}

\subsubsection{Ramanujan's congruence modulo $691$}

For the discriminant form
\[
\Delta(z)=\sum_{n\ge1}\tau(n)q^n,
\]
Ramanujan's classical congruence gives
\[
\tau(p)\equiv1+p^{11}\pmod{691}.
\]
Hence
\[
\bar\rho_{\Delta,691}^{\mathrm{ss}}
\simeq1\oplus\bar\chi_{691}^{11},
\]
and the relevant ratio character is
\[
\xi=\bar\chi_{691}^{11}.
\]
Since $\gcd(11,690)=1$,
\[
m_\xi=690.
\]
Moreover, the map $x\mapsto x^{11}$ is an automorphism of $\F_{691}^\times$ and fixes $-1$, so
\[
\tr\bar\rho_{\Delta,691}(\Frob_p)=0
\quad\Longleftrightarrow\quad
p\equiv-1\pmod{691}.
\]
Thus $\vartheta_\xi=1/690$.

\begin{corollary}\label{cor:Ramanujan691}
If
\[
M_Q\cap\Q(\zeta_{691})=\Q,
\]
then
\[
\delta_{Q,\Delta,691}=\frac{r(Q)}{690}
\]
and
\begin{equation}\label{eq:Ramanujanbound}
\#\{\mathbf n\in\mathcal B:Q(\mathbf n)\ne0,\ 691\nmid\tau(\abs{Q(\mathbf n)})\}
\ll
\frac{V}{(\log X^*)^{r(Q)/690}}.
\end{equation}
Without the linear-disjointness assumption, let $G_\xi=\Gal(M_Q\Q(\zeta_{691})/\Q)$ as above; if $H_j\subseteq G_\xi$ is the stabilizer of a geometric component of $V(R_j)$ and
\[
m_j=\abs{\im(\bar\chi_{691}^{11}\vert_{H_j})},
\]
then
\[
\delta_{Q,\Delta,691}
=\sum_{\substack{1\le j\le r(Q)\\m_j\ \mathrm{even}}}\frac1{m_j}.
\]
\end{corollary}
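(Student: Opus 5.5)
The corollary is the specialization of \cref{prop:Eisenstein} and \cref{cor:entangledEisenstein} to $f=\Delta$, $\lambda=691$, $k=12$. The plan is to verify the Eisenstein hypotheses, identify $\xi$ and the field $L_\xi$, and then quote those two results.

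\emph{Step 1: the Eisenstein hypotheses.} Ramanujan's congruence $\tau(p)\equiv1+p^{11}\pmod{691}$ is \eqref{eq:Eisensteincong} with $\psi_1=\psi_2=1$ and $k-1=11$. The prime $\ell=691$ is odd, as \cref{subsec:Eisenstein} requires. By \eqref{eq:xidef}, $\xi=\bar\chi_{691}^{11}$.

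\emph{Step 2: the field $L_\xi$ and the order $m_\xi$.} The mod-$691$ cyclotomic character $\bar\chi_{691}$ identifies $\Gal(\Q(\zeta_{691})/\Q)$ with $\F_{691}^\times$, which is cyclic of order $690$. Since $\gcd(11,690)=1$, the power map $x\mapsto x^{11}$ is an automorphism of $\F_{691}^\times$. Hence $\ker\xi=\ker\bar\chi_{691}$, so $L_\xi=\Q(\zeta_{691})$ and $m_\xi=690$. This is the one point needing care: the linear-disjointness hypothesis must be stated with respect to the correct field. Because $L_\xi$ equals $\Q(\zeta_{691})$ and not a proper subfield, the hypothesis $M_Q\cap\Q(\zeta_{691})=\Q$ is exactly \eqref{eq:LDxi}. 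For the same reason, $G_\xi=\Gal(M_Q\Q(\zeta_{691})/\Q)$, matching the notation in the statement.

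\emph{Step 3: the disjoint case.} Since $m_\xi=690$ is even, \eqref{eq:deltaXiLD} gives $\delta_{Q,\Delta,691}=r(Q)/690$. As a consistency check, $\xi(\Frob_p)=-1$ exactly when $p^{11}\equiv-1\pmod{691}$, i.e. when $p\equiv-1\pmod{691}$, because the $11$th-power map is bijective and fixes $-1$. This residue class has density $1/690$ among primes. Now \eqref{eq:Eisensteinbound} from \cref{prop:Eisenstein} (which rests on \cref{thm:divisibility} and \cref{cor:CSTsieve}) gives \eqref{eq:Ramanujanbound}. The divisibility condition $\lambda\mid a_f$ becomes $691\mid\tau$, since the coefficient field of $\Delta$ is $\Q$.

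\emph{Step 4: the general case.} Without linear disjointness, apply \cref{cor:entangledEisenstein} with $H_j\subseteq G_\xi$ the stabilizer of a geometric component of $V(R_j)$ and $\xi=\bar\chi_{691}^{11}$. This gives the stated formula $\delta_{Q,\Delta,691}=\sum_{m_j\text{ even}}1/m_j$, with $m_j=\abs{\im(\bar\chi_{691}^{11}\vert_{H_j})}$.

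Apart from the identification $L_\xi=\Q(\zeta_{691})$ in Step 2, every step is a direct substitution into results already proved.
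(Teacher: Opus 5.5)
Your proposal is correct and follows the paper's route. The paper also specializes \cref{prop:Eisenstein} and \cref{cor:entangledEisenstein} to $\xi=\bar\chi_{691}^{11}$, using $\gcd(11,690)=1$ to get $m_\xi=690$ and noting that $x\mapsto x^{11}$ fixes $-1$; your explicit check that $\ker\xi=\ker\bar\chi_{691}$, so that $L_\xi=\Q(\zeta_{691})$ and the stated hypothesis is exactly \eqref{eq:LDxi}, makes precise a point the paper leaves implicit.
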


\subsubsection{A colored-partition example}

A concrete example is provided by the weight-two eta-product
\[
f_{11}(z)=\eta(z)^2\eta(11z)^2\in S_2(\Gamma_0(11)).
\]
Write
\[
f_{11}(z)=\sum_{n\ge1}a_{11}(n)q^n.
\]
Define integers $v(n)$ by
\[
\prod_{m\ge1}(1-q^m)^2(1-q^{11m})^2
=\sum_{n\ge0}v(n)q^n.
\]
Since
\[
\eta(z)^2\eta(11z)^2
=q\prod_{m\ge1}(1-q^m)^2(1-q^{11m})^2,
\]
we have the exact identity
\begin{equation}\label{eq:a11v}
a_{11}(n)=v(n-1).
\end{equation}

The sequence $v(n)$ has a simple combinatorial interpretation. Let $v_e(n)$, respectively $v_o(n)$, denote the number of four-colored partitions of $n$ into distinct colored parts (that is, each colored part may occur at most once) with an even, respectively odd, total number of parts, where parts of the last two colors are required to be divisible by $11$. Then
\begin{equation}\label{eq:vevo}
v(n)=v_e(n)-v_o(n).
\end{equation}

The newform $f_{11}$ satisfies
\begin{equation}\label{eq:a11cong}
a_{11}(p)\equiv p+1\pmod5
\qquad(p\nmid55),
\end{equation}
which also follows from the fact that the corresponding elliptic curve has a rational point of order $5$; see, for example, \cite{AlacaAlacaAygin2018}. Equivalently,
\[
\bar\rho_{f_{11},5}^{\mathrm{ss}}\simeq1\oplus\bar\chi_5.
\]
Consequently,
\[
5\mid a_{11}(p)
\quad\Longleftrightarrow\quad
p\equiv-1\pmod5
\qquad(p\nmid55).
\]

\begin{corollary}\label{cor:colored}
Assume that
\[
M_Q\cap\Q(\zeta_5)=\Q.
\]
Then
\begin{equation}\label{eq:coloredbound}
\#\{\mathbf n\in\mathcal B:Q(\mathbf n)\ne0,\ 5\nmid v(\abs{Q(\mathbf n)}-1)\}
\ll_Q
\frac{V}{(\log X^*)^{r(Q)/4}}.
\end{equation}
Consequently, as $X^*\to\infty$ through boxes satisfying (B),
\[
v_e(\abs{Q(\mathbf n)}-1)
\equiv
v_o(\abs{Q(\mathbf n)}-1)
\pmod5
\]
for a density-one set of polynomial parameters $\mathbf n\in\mathcal B$ with $Q(\mathbf n)\ne0$.
\end{corollary}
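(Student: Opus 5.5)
The plan is to obtain \cref{cor:colored} by specialising \cref{prop:Eisenstein} to $f=f_{11}$, $\lambda=\ell=5$, and then converting the divisibility statement about $a_{11}$ into one about $v$ via \eqref{eq:a11v} and \eqref{eq:vevo}.

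First, identify the ratio character. By \eqref{eq:a11cong}, the congruence \eqref{eq:Eisensteincong} holds with $\psi_1=\psi_2=1$ and $k=2$. Hence
\[
\xi=\bar\chi_5,
\]
which is the mod-$5$ cyclotomic character, and $L_\xi=\Q(\zeta_5)$. Its image is all of $\F_5^\times$, so $m_\xi=4$, which is even; in particular $\xi(\Frob_p)=-1$ exactly when $p\equiv-1\pmod5$. Here $\ell=5$ is odd, as \cref{subsec:Eisenstein} requires.

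Next, apply the proposition. The hypothesis $M_Q\cap\Q(\zeta_5)=\Q$ is precisely \eqref{eq:LDxi}, so \eqref{eq:deltaXiLD} gives $\delta_{Q,f_{11},5}=r(Q)/4$. Then \eqref{eq:Eisensteinbound} bounds the number of $\mathbf n\in\mathcal B$ with $Q(\mathbf n)\ne0$ and $5\nmid a_{11}(\abs{Q(\mathbf n)})$ by $\ll V(\log X^*)^{-r(Q)/4}$. The implied constant depends on $Q$ and $A$; $f_{11}$ and $\lambda$ are fixed. Since $\abs{Q(\mathbf n)}\ge1$, identity \eqref{eq:a11v} gives $a_{11}(\abs{Q(\mathbf n)})=v(\abs{Q(\mathbf n)}-1)$, so the two exceptional sets coincide. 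This yields \eqref{eq:coloredbound}.

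For the density-one statement, use $r(Q)\ge1$: the exceptional count is $o(V)$ as $X^*\to\infty$. To turn this into a density statement, one must compare it with $\#\{\mathbf n\in\mathcal B:Q(\mathbf n)\ne0\}$, which I expect to be the only delicate point. The zero set of $Q$ in the box has size $O_Q(V/\min_jY_j)$ by a standard Schwartz--Zippel type bound, since $Q$ is a nonzero polynomial. By (B), $\min_jY_j\to\infty$ as $X^*\to\infty$, so this is $o(V)$ and the number of admissible parameters is $\sim V$. On the complementary set, $5\mid v(\abs{Q(\mathbf n)}-1)$, and \eqref{eq:vevo} converts this into $v_e\equiv v_o\pmod 5$.

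Finally, the combinatorial identity \eqref{eq:vevo} itself follows by expanding each factor $(1-q^m)$ as a signed generating function for a distinct part of one colour: two colours with arbitrary parts, and two colours with parts $11m$. The sign $(-1)^{\#\text{parts}}$ then gives $v_e-v_o$.
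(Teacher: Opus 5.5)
Your proposal is correct and takes essentially the same route as the paper: you specialize \cref{prop:Eisenstein} with $\xi=\bar\chi_5$ and $m_\xi=4$ to get $\delta_{Q,f_{11},5}=r(Q)/4$, then transfer the result through \eqref{eq:a11v} and \eqref{eq:vevo}. Your extra details are correct fillings of steps the paper leaves implicit, namely the Schwartz--Zippel bound $\#\{\mathbf n\in\mathcal B: Q(\mathbf n)=0\}=O_Q(V/\min_jY_j)=o(V)$ under (B) for the density-one claim and the generating-function derivation of \eqref{eq:vevo}.
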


\begin{proof}
The class $p\equiv-1\pmod5$ has density $1/4$ in $\Gal(\Q(\zeta_5)/\Q)\simeq(\Z/5\Z)^\times$. Thus \cref{prop:Eisenstein} gives $\delta_{Q,f_{11},5}=r(Q)/4$. Combining this with \eqref{eq:a11v} and \eqref{eq:vevo} gives \eqref{eq:coloredbound} and the final assertion.
\end{proof}

\bibliographystyle{plain}
\bibliography{references}

@article{AlacaAlacaAygin2018,
  author  = {Alaca, A. and Alaca, {\c{S}}. and Aygin, Z. S.},
  title   = {Eta quotients, {Eisenstein} series and elliptic curves},
  journal = {Integers},
  volume  = {18},
  year    = {2018},
  pages   = {A85},
  note    = {12 pp.}
}

@article{BDBrowning2006,
  author  = {de la Bret{\`e}che, R{\'e}gis and Browning, Timothy D.},
  title   = {Sums of arithmetic functions over values of binary forms},
  journal = {Acta Arith.},
  volume  = {125},
  number  = {3},
  year    = {2006},
  pages   = {291--304},
  doi     = {10.4064/aa125-3-6}
}

@article{BT2026,
  author  = {de la Bret{\`e}che, R{\'e}gis and Tenenbaum, G{\'e}rald},
  title   = {Mean values of arithmetic functions and application to sums of powers},
  journal = {Math. Proc. Cambridge Philos. Soc.},
  volume  = {180},
  number  = {1},
  year    = {2026},
  pages   = {1--13},
  doi     = {10.1017/S0305004125101382}
}

@article{CKPS2025,
  author        = {Chan, Stephanie and Koymans, Peter and Pagano, Carlo and Sofos, Efthymios},
  title         = {{$6$-torsion and integral points on quartic threefolds}},
  journal       = {Ann. Sc. Norm. Super. Pisa Cl. Sci.},
  year          = {2025},
  note          = {To appear},
  doi           = {10.2422/2036-2145.202412_006},
  eprint        = {2403.13359},
  archivePrefix = {arXiv},
  primaryClass  = {math.NT}
}

@article{CKPSAverages2025,
  author  = {Chan, Stephanie and Koymans, Peter and Pagano, Carlo and Sofos, Efthymios},
  title   = {Averages of multiplicative functions along equidistributed sequences},
  journal = {J. Number Theory},
  volume  = {273},
  year    = {2025},
  pages   = {1--36},
  doi     = {10.1016/j.jnt.2025.01.005}
}

@article{ChiriacYang2022,
  author  = {Chiriac, Liubomir and Yang, Liyang},
  title   = {Summing {Hecke} eigenvalues over polynomials},
  journal = {Math. Z.},
  volume  = {302},
  number  = {2},
  year    = {2022},
  pages   = {643--662},
  doi     = {10.1007/s00209-022-03071-y}
}

@article{deLaBretecheTenenbaum2012,
  author  = {de la Bret{\`e}che, R{\'e}gis and Tenenbaum, G{\'e}rald},
  title   = {Moyennes de fonctions arithm{\'e}tiques de formes binaires},
  journal = {Mathematika},
  volume  = {58},
  number  = {2},
  year    = {2012},
  pages   = {290--304},
  doi     = {10.1112/S0025579311002154}
}

@article{Erdos1952,
  author  = {Erd{\H{o}}s, Paul},
  title   = {On the sum $\sum_{k=1}^{x} d(f(k))$},
  journal = {J. London Math. Soc.},
  volume  = {27},
  year    = {1952},
  pages   = {7--15},
  doi     = {10.1112/jlms/s1-27.1.7}
}

@article{Fomenko1997,
  author  = {Fomenko, O. M.},
  title   = {Distribution of lattice points on surfaces of second order},
  journal = {J. Math. Sci.},
  volume  = {83},
  number  = {6},
  year    = {1997},
  pages   = {795--815},
  doi     = {10.1007/BF02439206},
  note    = {Russian original: Zap. Nauchn. Sem. POMI 212 (1994), 164--195}
}

@article{Hecke1920,
  author  = {Hecke, Erich},
  title   = {Eine neue Art von Zetafunktionen und ihre Beziehungen zur Verteilung der Primzahlen. Zweite Mitteilung},
  journal = {Math. Z.},
  volume  = {6},
  year    = {1920},
  pages   = {11--51}
}

@article{Kim2007,
  author  = {Kim, Henry H.},
  title   = {Functoriality and number of solutions of congruences},
  journal = {Acta Arith.},
  volume  = {128},
  number  = {3},
  year    = {2007},
  pages   = {235--243},
  doi     = {10.4064/aa128-3-4}
}

@incollection{LagariasOdlyzko1977,
  author    = {Lagarias, Jeffrey C. and Odlyzko, Andrew M.},
  title     = {Effective versions of the {Chebotarev} density theorem},
  booktitle = {Algebraic Number Fields: {$L$}-Functions and Galois Properties},
  editor    = {Fr{\"o}hlich, Armand},
  publisher = {Academic Press},
  address   = {London},
  year      = {1977},
  pages     = {409--464}
}

@article{LangWeil1954,
  author  = {Lang, Serge and Weil, Andr{\'e}},
  title   = {Number of points of varieties in finite fields},
  journal = {Amer. J. Math.},
  volume  = {76},
  year    = {1954},
  pages   = {819--827}
}

@article{LuoLao2024,
  author  = {Luo, Shu and Lao, Huixue},
  title   = {On the coefficients of automorphic representations over polynomials},
  journal = {Ramanujan J.},
  volume  = {65},
  number  = {1},
  year    = {2024},
  pages   = {173--188},
  doi     = {10.1007/s11139-024-00889-4}
}

@incollection{MurtyMurty2009,
  author    = {Murty, M. Ram and Murty, V. Kumar},
  title     = {The {Sato--Tate} conjecture and generalizations},
  booktitle = {Current Trends in Science, Platinum Jubilee Special},
  publisher = {Indian Academy of Sciences},
  address   = {Bangalore},
  year      = {2009},
  pages     = {639--646}
}

@article{Nair1992,
  author  = {Nair, Mohan},
  title   = {Multiplicative functions of polynomial values in short intervals},
  journal = {Acta Arith.},
  volume  = {62},
  number  = {3},
  year    = {1992},
  pages   = {257--269},
  doi     = {10.4064/aa-62-3-257-269}
}

@article{NairTenenbaum1998,
  author  = {Nair, Mohan and Tenenbaum, G{\'e}rald},
  title   = {Short sums of certain arithmetic functions},
  journal = {Acta Math.},
  volume  = {180},
  number  = {1},
  year    = {1998},
  pages   = {119--144},
  doi     = {10.1007/BF02392880}
}

@article{NewtonThorneI,
  author  = {Newton, James and Thorne, Jack A.},
  title   = {Symmetric power functoriality for holomorphic modular forms},
  journal = {Publ. Math. Inst. Hautes {\'E}tudes Sci.},
  volume  = {134},
  year    = {2021},
  pages   = {1--116},
  doi     = {10.1007/s10240-021-00127-3}
}

@article{NewtonThorneII,
  author  = {Newton, James and Thorne, Jack A.},
  title   = {Symmetric power functoriality for holomorphic modular forms, {II}},
  journal = {Publ. Math. Inst. Hautes {\'E}tudes Sci.},
  volume  = {134},
  year    = {2021},
  pages   = {117--152},
  doi     = {10.1007/s10240-021-00126-4}
}

@article{NewtonThorneHilbert,
  author  = {Newton, James and Thorne, Jack A.},
  title   = {Symmetric power functoriality for {Hilbert} modular forms},
  journal = {Ann. of Math. (2)},
  volume  = {203},
  number  = {1},
  year    = {2026},
  pages   = {283--347},
  doi     = {10.4007/annals.2026.203.1.4}
}

@article{Serre1976,
  author  = {Serre, Jean-Pierre},
  title   = {Divisibilit{\'e} de certaines fonctions arithm{\'e}tiques},
  journal = {Enseign. Math. (2)},
  volume  = {22},
  number  = {3--4},
  year    = {1976},
  pages   = {227--260}
}

@article{Serre1981,
  author  = {Serre, Jean-Pierre},
  title   = {Quelques applications du th{\'e}or{\`e}me de densit{\'e} de {Chebotarev}},
  journal = {Publ. Math. Inst. Hautes {\'E}tudes Sci.},
  volume  = {54},
  year    = {1981},
  pages   = {123--201}
}

@book{SerreNXp,
  author    = {Serre, Jean-Pierre},
  title     = {Lectures on {$N_X(p)$}},
  series    = {Research Notes in Mathematics},
  volume    = {11},
  publisher = {CRC Press},
  address   = {Boca Raton, FL},
  year      = {2012}
}

@article{Shiu1980,
  author  = {Shiu, P.},
  title   = {A {Brun--Titchmarsh} theorem for multiplicative functions},
  journal = {J. Reine Angew. Math.},
  volume  = {313},
  year    = {1980},
  pages   = {161--170},
  doi     = {10.1515/crll.1980.313.161}
}

@article{Thorner2021,
  author  = {Thorner, Jesse},
  title   = {Effective forms of the {Sato--Tate} conjecture},
  journal = {Res. Math. Sci.},
  volume  = {8},
  number  = {1},
  year    = {2021},
  pages   = {Paper No. 4},
  note    = {21 pp.},
  doi     = {10.1007/s40687-020-00234-3}
}

@article{Thorner2025,
  author  = {Thorner, Jesse},
  title   = {Exceptional zeros of {Rankin--Selberg} {$L$}-functions and joint {Sato--Tate} distributions},
  journal = {Int. Math. Res. Not. IMRN},
  year    = {2025},
  number  = {20},
  pages   = {rnaf307},
  doi     = {10.1093/imrn/rnaf307}
}

@article{Wong2019,
  author  = {Wong, Peng-Jie},
  title   = {On the {Chebotarev--Sato--Tate} phenomenon},
  journal = {J. Number Theory},
  volume  = {196},
  year    = {2019},
  pages   = {272--290},
  doi     = {10.1016/j.jnt.2018.09.010}
}

@misc{Woo2026,
  author        = {Woo, Katharine},
  title         = {Sums of {Hecke} eigenvalues along polynomial sequences and base change for {$\mathrm{GL}(2)$}},
  year          = {2026},
  note          = {arXiv:2604.18923},
  eprint        = {2604.18923},
  archivePrefix = {arXiv},
  primaryClass  = {math.NT}
}

@article{blomersums2008,
  author  = {Blomer, Valentin},
  title   = {Sums of {Hecke} eigenvalues over values of quadratic polynomials},
  journal = {Int. Math. Res. Not. IMRN},
  year    = {2008},
  number  = {16},
  pages   = {rnn059},
  note    = {29 pp.},
  doi     = {10.1093/imrn/rnn059}
}

@misc{kuan2023sums,
  author        = {Kuan, Chan Ieong and Lowry-Duda, David and Walker, Alexander and Steiner, Raphael S.},
  title         = {Sums of cusp form coefficients along quadratic sequences},
  year          = {2023},
  note          = {arXiv:2301.11901},
  eprint        = {2301.11901},
  archivePrefix = {arXiv},
  primaryClass  = {math.NT}
}

\end{document}